\documentclass[twoside,11pt]{article}
\usepackage{amsfonts,amscd,amssymb}
\newcommand{\seqn}{\begin{equation}}
\newcommand{\eeqn}{\end{equation}}
\newcommand{\seqna}{\begin{eqnarray}}
\newcommand{\eeqna}{\end{eqnarray}}

\newenvironment{proof}{{\it Proof.}}{\hfill $ \square $ \vskip 4mm }

\newtheorem{definition}{Definition}[section]
\newtheorem{lemma}[definition]{Lemma}
\newtheorem{proposition}[definition]{ Proposition}

\newtheorem{remark}[definition]{Remark}

\newtheorem{theorem}[definition]{ Theorem}

\let\BBox\Box
\def\Box{$\BBox$}

\def\hfl#1#2{\smash{\mathop{\hbox to 10mm{\rightarrowfill}}
\limits^{\scriptstyle#1}_{\scriptstyle#2}}}
\def\vfl#1#2{\llap{$\scriptstyle  #1$}\left\downarrow
\vbox to 9mm{}\right. \rlap{$\scriptstyle #2$}}
\def\diagram#1{\def\normalbaselines{\baselineskip=0pt\lineskip=13pt\lineskiplimit=1pt}
\matrix{#1}}
\def\lettrine#1#2#3{\noindent\hangindent#1\hangafter-#2
\hskip-#1\smash{\hbox to#1{#3\hfill}}\ignorespaces}
\def\limind{\mathop{\oalign{lim\cr
\hidewidth$\longrightarrow$\hidewidth\cr}}}

\def\vfl#1#2{\llap{$\scriptstyle #1$}\left\uparrow
\vbox to 6mm{}\right.\rlap{$\scriptstyle #2$}}

\def\kfl#1#2{\llap{$\scriptstyle #1$}\left\downarrow
\vbox to 8mm{}\right.\rlap{$\scriptstyle #2$}}

\title{Projectivity and flatness over the endomorphism ring of a finitely generated $(H,{\mathcal C})$-comodule}

\author{T. Gu\'ed\'enon}

\date{}

\begin{document}

\maketitle

\centerline{D\'epartement de Math\'ematiques}
\centerline{Universit\'e de Ziguinchor}
\centerline{S\'en\'egal}
\centerline{Email: thomas.guedenon@univ-zig.sn}

\date{}

\begin{abstract} Let $k$ be a commutative ring, $H$ a bialgebra over $k$, $A$ an $H$-algebra and $\mathcal C$ an $(H,A)$-coring, i.e a left $(H,A)$-bimodule which is an $A$-coring in a compatible way. Left $(H,A)$-bimodules and their deformations play a fundamental role in the study of the differential geometry of noncommutative manifolds. Let $\Lambda$ be a right $(H,{\mathcal C})$-comodule. The $k$-module $_HEnd^{\mathcal C}(\Lambda)$ of right $(H,{\mathcal C})$-colinear maps from $\Lambda$ to $\Lambda$ is a ring. Let us assume that $\mathcal C$ is flat as a left $A$-module. If $\Lambda$ is finitely generated (finitely presented) as a right $(H,{\mathcal C})$-comodule, we give necessary and sufficient conditions for projectivity and flatness of a module over $_HEnd^{\mathcal C}(\Lambda)$. If $\mathcal C$ contains a fixed $H$-grouplike element, we can replace $\Lambda$ with $A$.  
\end{abstract}

Keywords: $(H,A)$-coring; $(H,{\mathcal C})$-comodule; Bialgebra; Projectivity; Flatness.

AMS Mathematical Subject Classification (2020) 16T05, 16T15, 16D40, 16W30.

\section{Introduction}
 
Let $k$ be a field of characteristic zero and $A$ a noncommutative Poisson algebra. We refer to Xu \cite{Xu}, Flato, Gerstenhaber, Voronov \cite{FGV}, Kubo \cite{Kubo1, Kubo2}, Yang,Yao, Ye \cite{YYY} and Bao, Yao, Ye \cite{BaoYe} for further information on noncommutative Poisson algebras and quasi-Poisson modules over noncommutative Poisson algebras. In \cite{Guedenon}, we have introduced the notion of a quasi-Poisson $A$-coring and of a right quasi-Poisson comodule over a quasi-Poisson $A$-coring. The universal enveloping algebra $U(A)$ of the Lie algebra $A$ is a cocommutative Hopf algebra, and a noncommutative Poisson algebra is a left $U(A)$-module algebra. This suggests to us that we can extend the results in \cite{Guedenon} to the case where $A$ is a left $H$-module algebra over a commutative ring $k$, for $H$ a Hopf algebra with a bijective antipode, or more generally for $H$ a bialgebra. Let $k$ be a commutative ring, $H$ a bialgebra and $A$ a left $H$-module algebra. A left {\it $(H,A)$-module} will be the usual left $A \#H$-module. A left-right {\it $(H,A)$-module} is a left $H$-module which is also a right $A$-module in a compatible way. A left $(H,A)$-bimodule is an $A$-bimodule which is a left $(H,A)$-module with the left $A$-structure and a left-right $(H,A)$-module with the right $A$-structure. A left $(H,A)$-bimodule is called a left $H$-module $A$-bimodule in \cite{AschSch, Schenkel 1, Schenkel 2}; and an $(H,A,A)$-module in \cite[Page 317]{Caen}, where $H$ is a Hopf algebra. Left $(H,A)$-bimodules and their deformations play a central role in the study of the differential geometry of noncommutative manifolds \cite{AschSch}. 

An $(H,A)$-coring is an $A$-coring which is a left $(H,A)$-bimodule such that the $A$-coring structure maps are homomorphisms of left $(H,A)$-bimodules (Definition 2.2). A right $(H,{\mathcal C})$-comodule over an $(H,A)$-coring ${\mathcal C}$ is a right ${\mathcal C}$-comodule and a left-right $(H,A)$-module such that the comodule structure map is a homomorphism of left-right $(H,A)$-modules (Definition 2.6). Let $\mathcal C$ be an $(H,A)$-coring. We denote by $_H{\mathcal M}^{\mathcal C}$ the category of right $(H,{\mathcal C})$-comodules and right $(H,{\mathcal C})$-colinear maps. For $M$ and $N$ two right $(H,{\mathcal C})$-comodules, we denote by $_HHom^{\mathcal C}(M,N)$ the $k$-module of right $(H,{\mathcal C})$-colinear maps from $M$ to $N$. The $k$-module $^{\prime}{\mathcal C}$ of left $A$-linear maps ${\mathcal C} \rightarrow A$ is an associative algebra called the left dual ring of ${\mathcal C}$. 

When $\mathcal C$ is an $(H,A)$-coring flat as a left $A$-module, we show that $_H{\mathcal M}^{\mathcal C}$ is a Grothendieck category (Lemma 2.7).

Let $H$ be a Hopf algebra with a bijective antipode. Then $^{\prime}{\mathcal C}$ is a left $(H,A)$-bimodule and a left $H$-module algebra (Lemma 2.9). We can consider the category $_H{\mathcal M}_{^{\prime}{\mathcal C}}$ of left-right $(H,{^{\prime}{\mathcal C}})$-modules which is isomorphic to the category $_{(^{\prime}{\mathcal C})^{op} \#H}{\mathcal M}$ of left modules over the smash product $(^{\prime}{\mathcal C})^{op} \#H$ \cite[Proposition 8.4.1]{Caen}. Furthermore, $_H{\mathcal M}^{\mathcal C}$ is a full Grothendieck subcategory of $_{(^{\prime}{\mathcal C})^{op} \#H}{\mathcal M}$. Let $\mathcal C$ be finitely generated and projective as a left $A$-module. Then we show that the categories $_H{\mathcal M}^{\mathcal C}$, $_H{\mathcal M}_{{^{\prime}{\mathcal C}}}$ and $_{(^{\prime}{\mathcal C})^{op} \#H}{\mathcal M}$ are isomorphic (Lemma 2.11). We get from \cite{CG} or \cite{Guedenon 1} necessary and sufficient conditions for a module over the endomorphism ring $_HEnd^{\mathcal C}(\Lambda)$ to be projective or flat, where $\Lambda$ is an object of $_H{\mathcal M}^{\mathcal C}$ that is finitely generated (resp. finitely presented) as a left $(^{\prime}{\mathcal C})^{op} \#H$-module. Using the techniques developed in \cite{GRio}, we will establish these results if $H$ is a bialgebra and $\mathcal C$ is flat as a left $A$-module without assuming that $\mathcal C$ is finitely generated. More precisely, let $H$ be a bialgebra, $ {\mathcal C}$ an $(H,A)$-coring which is flat as left $A$-module and $\Lambda$ a finitely generated (resp. finitely presented) object in $_H{\mathcal M}^{\mathcal C}$. We give necessary and sufficient conditions for a module over the endomorphism ring $_HEnd^{\mathcal C}(\Lambda)$ to be projective or flat (Theorem 3.2 and Theorem 3.3). Note that if $H$ is a Hopf algebra with a bijective antipode and if $\Lambda$ is an object of $_H{\mathcal M}^{\mathcal C}$ finitely generated (finitely presented) in $_{(^{\prime}{\mathcal C})^{op} \#H}{\mathcal M}$, then it is finitely generated (finitely presented) in $_H{\mathcal M}^{\mathcal C}$. So we get generalizations of our earlier works (\cite{CG} with S. Caenepeel), \cite{TG} and \cite{Guedenon} (see Subsections 5.1, 5.2 and 5.4). 

Let $H$ be a Hopf algebra with a bijective antipode. If $\mathcal C$ contains a fixed $H$-grouplike element $x$ (Definition 4.1), then $A$ is a right $(H,{\mathcal C})$-comodule that is cyclic as a left ${^{\prime}{\mathcal C}}^{op} \#H$-module (Lemma 4.5), and  the endomorphism ring $_HEnd^{\mathcal C}(A)$ is isomorphic to the subring $A^{H,co{\mathcal C}, x}$ of $(H,{\mathcal C}, x)$-coinvariants of $A$. We deduce from Theorems 3.2 and 3.3, necessary and sufficient conditions for a given module over $A^{H,co{\mathcal C}, x}$ to be projective or flat (Theorems 4.6 and 4.7). 

As the algebra $A^{co{\mathcal C},x}$ has played an important role for Galois corings, we hope that the algebra $A^{H,co{\mathcal C}, x}$ will do so for Galois corings with Hopf actions.

In many applications: $\mathcal C$ could be the trivial $(H,A)$-coring $A$ (Subsection 5.1),  we also consider the case where $A$ and $\Lambda$ are $H$-algebras and $A$ is an $(H,\Lambda)$-ring with a right $H$- grouplike character (Definition 5.2) note that (if $H$ is a Hopf algebra with a bijective antipode, the left dual $^{\prime}{\mathcal C}$ of an $(H,A)$-coring $\mathcal C$ with a fixed $H$-grouplike element is an $(H,A)$-ring with a right grouplike character); $H$ could be the base ring $k$ considered as a trivial bialgebra (Subsection 5.2); $H$ could be a Hopf algebra acting on a coalgebra $C$ (Subsection 5.3); $H$ could be the universal enveloping algebra $U(A)$ of a noncommutative Poisson algebra $A$ (Subsection 5.4), or the group algebra of a group $G$ (Subsection 5.5), or the universal enveloping algebra of a Lie algebra $\mathcal G$ (Subsection 5.6); $H$ could be a Hopf algebra acting trivially and coacting on an associative algebra $A$ in a compatible way (Subsection 5.7); $H$ could be the deformation (by a twist) of a Hopf algebra acting on an associative algebra (Subsection 5.9). We also consider the category $_{\mathcal G}{\mathcal M}_A^H$ of $(A,{\mathcal G},H)$-comodules, where $\mathcal G$ is an $H$-comodule Lie algebra, $A$ is an $H$-comodule algebra, and the $H$-action and the $\mathcal G$-action on $A$ are compatible (Subsection 5.8). 
 
Although the techniques are not new, the use of an $H$-action on a coring is certainly original and will be of interest for the theory of Hopf algebras and their actions on $A$-corings and on $A$-rings. Also there are a lot of interesting applications of our results. Unadorned $\otimes$  will be over $k$.

\section{Preliminary results} 

Let $k$ be a commutative ring and $A$ be a $k$-algebra. If $_A{\mathcal C}_A$ is an $(A , A)$-bimodule, then ${\mathcal C} \otimes_A{\mathcal C}$ is an $(A , A)$-bimodule in the natural way. Likewise, if $M$ is a right $A$-module, then $M \otimes_A{\mathcal C}$ is a right $A$-module. An $A$-coring $\mathcal{C}$ is an $A$-bimodule ${\mathcal C}$ together with two $A$-bimodule maps $\Delta_{\mathcal{C}} : \mathcal{C} \rightarrow \mathcal{C} \otimes_A \mathcal{C}$ and $\epsilon_{\mathcal{C}}: {\mathcal{C}}\rightarrow A$ such that   
$$(id_{\mathcal C} \otimes_A \Delta_{\mathcal C}) \circ \Delta_{\mathcal C}=(\Delta_{\mathcal C} \otimes_A id_{\mathcal C}) \circ \Delta_{\mathcal C} \quad \hbox{and}$$
$$(id_{\mathcal C} \otimes_A \epsilon_{\mathcal C}) \circ \Delta_{\mathcal C}=(\epsilon_{\mathcal C} \otimes_A id_{\mathcal C}) \circ \Delta_{\mathcal C}=id_{\mathcal C}.$$
The last two relations mean that the usual coassociativity and counit properties hold. Thus an $A$-coring is a coalgebra in the monoidal category $_A{\mathcal M}_A$ of $A$-bimodules. For more details on corings, we refer to \cite{br1}, \cite{br2}, \cite{br3}, \cite{brWi} and \cite{caeMilZhu}. Let ${\mathcal{C}}$ be an $A$-coring. A right $\mathcal{C}$-comodule is a right $A$-module $M$ together with a right $A$-linear map $\rho_{M , \mathcal{C}} : M\rightarrow M \otimes_A \mathcal{C}$ such that
$$(id_M \otimes_A \epsilon_{\mathcal{C}})\circ \rho_{M , \mathcal{C}}=id_M,
\quad \hbox{and} \quad (id_M \otimes_A \Delta_{\mathcal{C}})\circ \rho_{M ,
\mathcal{C}}=(\rho_{M , \mathcal{C}} \otimes_A id_{\mathcal{C}})\circ \rho_{M ,\mathcal{C}}.$$
We will use Sweedler-Heyneman notation but we will omit the symbol $\sum$:
$$\Delta_{\mathcal{C}}(c)=c_1 \otimes_A c_2 \quad \hbox{and} \quad \rho_{M , \mathcal{C}}(m)=m_{0} \otimes_A m_{1}.$$
A morphism of right $\mathcal{C}$-comodules or a right $\mathcal C$-colinear map $f : M \rightarrow N$ is a right $A$-linear map such that
$$\rho_{N , \mathcal{C}} \circ f=(f\otimes_A id_M)\circ \rho_{M , \mathcal{C}};$$
or equivalently, a right $A$-linear map such that
$$f(m)_0 \otimes_A f(m)_1=f(m_0) \otimes_A m_1.$$
We denote by $Hom^{\mathcal{C}}(M , N)$ the vector space of morphisms of right $\mathcal{C}$-comodules from $M$ to $N$, by $\mathcal{M}^{\mathcal{C}}$ the category formed by right $\mathcal{C}$-comodules and morphisms of right $\mathcal{C}$-comodules and by $\mathcal{M}$ the category of $k$-modules.

Let $A$ be a $k$-algebra. If ${\mathcal C}$ is an $A$-coring, we will denote by $\blacktriangleright$ its left $A$-action and by $\blacktriangleleft$ its right $A$-action. ${\mathcal C} \otimes_A {\mathcal C}$ means $({\mathcal C}, \blacktriangleleft) \otimes_A (\blacktriangleright,{\mathcal C})$. We write $^{\prime}{\mathcal C}={_A}Hom({\mathcal C} , A)$: $A$ and $\mathcal C$ are considered as left $A$-modules. So the $A$-module structure of $\mathcal C$ is $(a,c) \mapsto a \blacktriangleright c$. Then $^{\prime}{\mathcal C}$ is an associative ring with unit $\epsilon_{\mathcal C}$ (see \cite[Proposition 35]{caeMilZhu}): the multiplication is defined by
$$f\#g= g \circ(id_{\mathcal C} \otimes f) \circ \Delta_{\mathcal C},$$
or equivalently,
$$(f\#g)(c)=g(c_1 \blacktriangleleft (f(c_2))$$
for all left $A$-linear maps $f$, $g$: ${\mathcal C}\rightarrow A$ and $c \in {\mathcal C}$; where $\Delta_{\mathcal C}(c)=c_1 \otimes_A c_2$. There is an algebra homomorphism $i: A \rightarrow {^{\prime}}{\mathcal C}$ defined by $i(a)(c)=\epsilon_{\mathcal C}(c)a$ for $a \in A$ and $c \in {\mathcal C}$: for $a$, $a' \in A$ and $c \in {\mathcal C}$, we have 
$$i(a)(a'\blacktriangleright c)=\epsilon_{\mathcal C}(a' \blacktriangleright c)a=a'\epsilon_{\mathcal C}(c)a=a'[i(a)(c)].$$ So $i(a)$ is left $A$-linear, that is, $i(a) \in  {^{\prime}}{\mathcal C}$. We also have
$$\begin{array}{rcl}i(a)\#i(a')(c)&=&i(a')[c_1\blacktriangleleft (i(a)(c_2))]\\
&=&i(a')[c_1\blacktriangleleft ((\epsilon_{\mathcal C}(c_2)a)]\\
&=&\epsilon_{\mathcal C}[c_1\blacktriangleleft (\epsilon_{\mathcal C}(c_2)a)]a'\\
&=&\epsilon_{\mathcal C}(c_1)[\epsilon_{\mathcal C}(c_2)a]a'\\
&=&\epsilon_{\mathcal C}(c)aa'=i(aa')(c).\end{array}$$
Clearly, $i(1_A)=\epsilon_{\mathcal C}(c)$. Therefore $^{\prime}{\mathcal C}$ is an $A$-bimodule: the biaction is $[(af)a'](c)=f(c\blacktriangleleft a)a'$ for $a$, $a' \in A$ and $f \in ^{\prime}{\mathcal C}$.

Let $H$ be a bialgebra. An algebra $A$ is a left {\it $H$-module algebra} if $A$ is a left $H$-module satisfying  
$$h.(aa')=(h_1.a)(h_2.a') \quad \hbox{and} \quad h.1_A=\epsilon_H(h)1_A \quad \forall \quad a, a' \in A, h \in H.$$ In this case, we say that the $H$-action on $A$ is compatible with the  multiplication in $A$.
A homomorphism of $H$-module algebras is a homomorphism of $H$-modules which is also a homomorphism of $k$-algebras. 

Let $A$ be a left $H$-module algebra. Thus we can form the smash product algebra $A \#H$. The product in $A\#H$ is defined by 
$$(ah)(a'h')=(a(h'_1.a'))(h_2h') \quad \forall a,a' \in A,h,h' \in H.$$

From now an $H$-module means a left $H$-module, and an $H$-algebra means a left $H$-module algebra. Let $A$ be an $H$-algebra. A vector space $M$ is a left $A\#H$-module if and only if $M$ is a left $A$-module and an $H$-module such that 
$$h(am)=(h_1.a)(h_2m), \quad \forall a \in A, m \in M, h \in H \quad \eqno(1).$$ 

We say that $M$ is a left $(H,A)$-module if $M$ is a left $A\#H$-module. A homomorphism of left $(H,A)$-modules (or a left  $(H,A)$-linear map) from $M$ to $N$ is a homomorphism of left $A\#H$-modules from $M$ to $N$, that is, a homomorphism of left $A$-modules and of $H$-modules from $M$ to $N$. The category of left $A \#H$-modules will be denoted $_{A \#H}{\mathcal M}$: its morphismes are the homomorphisms of  left $(H,A)$-modules.

A $k$-module $M$ is a left-right $(H,A)$-module if $M$ is a right $A$-module and an $H$-module satisfying the following compatibility condition:
$$h(ma)=(h_1m)(h_2.a) \quad \forall \quad a \in A, h \in H \quad m \in M \eqno(2).$$ 
A homomorphism of left-right $(H,A)$-modules (or a left-right $(H,A)$-linear map) from $M$ to $N$ is a homomorphism of right $A$-modules and of $H$-modules from $M$ to $N$, i.e.,
$$f(ma)=f(m)a, \quad \hbox{and} \quad f(hm)=hf(m), \quad m \in M, h \in H.$$
The category of left-right $(H,A)$-modules will be denoted $_H{\mathcal M}_A$: its morphisms are the homomorphisms of left-right $(H,A)$-modules. By \cite[Proposition 8.4.1]{Caen}, the category $_H{\mathcal M}_A$ is isomorphic to the catgory $_{A^{op} \#H}{\mathcal M}$. The product in $A^{op} \#H$ is defined by
$$(ah)(a'h')=([h_2.a')a](h_1h') \quad a,a' \in A^{op}, h,h' \in H.$$ 

We say that a $k$-module $M$ is a left $(H,A)$-bimodule if $M$ is an $A$-bimodule in such a way that with the right $A$-action, it is a left-right $(H,A)$-module and with the left $A$-action, it is a left $(H,A)$-module. In other words, a $k$-module $M$ is a left $(H,A)$-bimodule if $M$ is an $A$-bimodule and an $H$-module such that the equations $(1)$ and $(2)$ are satisfied. Given two left $(H,A)$-bimodules $M$ and $N$, a homomorphism $f$ of left $(H,A)$-bimodules from $M$ to $N$ is a homomorphism $f$ of $A$-bimodules which is also a homomorphism of $H$-modules from $M$ to $N$, i.e., $f(ma)=f(m)a$, $f(am)=af(m)$ and $f(hm)=hf(m)$ for all $h \in H$, $a \in A$ and $m \in M$. So a homomorphism $f$ of left $(H,A)$-bimodules from $M$ to $N$ is a homomorphism of left-right $(H,A)$-modules from $M$ to $N$ which is also a homomorphism of left $A$-modules, or equivalently, a homomorphism of left $(H,A)$-modules from $M$ to $N$ which is also a homomorphism of right $A$-modules, or equivalently, a $k$-linear map from $M$ to $N$ which is also a right $A$-linear map, a left $A$-linear map and an $H$-linear map. We denote by ${}_{H,A}{\mathcal M}_A $ the category of left $(H,A)$-bimodules with morphisms the homomorphisms of left $(H,A)$-bimodules. The $H$-algebra $A$ itself is a left $(H,A)$-bimodule. If $A$ is an $H$-algebra, then $A \otimes_kA$ is a left $(H,A)$-bimodule: the actions of $A$ are given by $$a''(a \otimes a')=(a''a) \otimes a', (a \otimes a')a''=a \otimes (a'a''), \quad \forall \quad a,a',a'' \in A,$$ and the $H$-action is the diagonal action :
$$h.(a \otimes a')=(h_1.a) \otimes (h_2.a'), \quad \forall \quad a,a' \in A, h \in H.$$

Let $H$ be a Hopf algebra with a bijective antipode. Then every left $H$-module algebra $A$ has a natural structure of a right $H^{cop}$-module algebra, where $H^{cop}$ is the Hopf algebra with the same multiplication as $H$ but with the co-opposite comultiplication: the right $H^{cop}$-action is given by 
$$a\bullet h=S_H^{-1}(h).a, \quad \forall \quad a \in A, h \in H^{cop},$$ and we can form the smash product algebra $H^{cop} \#A$. Recall that the product in $H^{cop}\#A $ is defined by 
$$(ha)(h'a')=(hh'_1)[(a \bullet h'_2)a']=(hh'_1)[(S_H^{-1}(h'_2).a)a' \quad \forall \quad a,a' \in A, \quad h,h' \in H^{cop}.$$
Every left-right $(H,A)$-module $M$ is a right $H^{cop} \#A$-module: the right $H$-action is given by
$$mh=S_H^{-1}(h)m, \quad \forall m \in M, h \in H^{cop}.$$ 

Every left $(H,A)$-bimodule $M$ is a left $A \#H$-module and a right $H^{cop} \#A$-module with the right $H$-action given as above.

From now unless otherwise stated, $H$ is a bialgebra and $A$ is an $H$-module algebra.

\begin{lemma} (1) Let $M$ be a left-right $(H,A)$-module and $N$ a left $(H,A)$-bimodule. Then $M \otimes_A N$ is a left-right $(H,A)$-module: the right $A$-module structure is the natural one and the $H$-action is given by the diagonal action: 
$$h(m \otimes_A n)=(h_1m) \otimes_A(h_2n).$$

(2) Let $M$ be a left $(H,A)$-bimodule and $N$ a left-left $(H,A)$-module. Then $M \otimes_A N$ is a left-left $(H,A)$-module: the left $A$-module structure is the natural one and the $H$-action is given by the diagonal action.

(3) Let $M$ and $N$ be left $(H,A)$-bimodules. Then $M \otimes_A N$ is a left $(H,A)$-bimodule: the $A$-biaction is the natural one and the $H$-action is the diagonal action.
\end{lemma}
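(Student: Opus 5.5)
The plan is to prove (1) in detail and to treat (2) and (3) by the same argument. In (2), "left-left $(H,A)$-module" is read as left $(H,A)$-module, i.e. a left $A\#H$-module.

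For (1), the first task is to show that the diagonal $H$-action on $M\otimes_A N$ is well defined. Consider the $k$-linear map
$$\phi_h : M\times N\to M\otimes_A N,\qquad (m,n)\mapsto (h_1m)\otimes_A(h_2n),$$
defined first on $M\otimes N$ through the coproduct $\Delta_H(h)=h_1\otimes h_2$. This is legitimate because $(m,n)\mapsto \sum (h_1m)\otimes_A(h_2n)$ is the composite of $h\otimes m\otimes n\mapsto h_1\otimes h_2\otimes m\otimes n$ with the action maps, and that composite is $k$-linear in each variable. Next one checks that $\phi_h$ is $A$-balanced. Using (2) for $M$ and (1) for $N$, together with coassociativity,
$$\phi_h(ma,n)=(h_1m)(h_2.a)\otimes_A h_3n=(h_1m)\otimes_A (h_2.a)(h_3n)=\phi_h(m,an).$$
So $\phi_h$ factors through $M\otimes_A N$. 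This balancedness check is the only genuinely non-formal point, and it is exactly where the compatibility conditions (1) and (2) are needed.

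The module axioms then follow by direct checks:
\begin{itemize}
\item associativity $(hh')x=h(h'x)$ holds because $\Delta_H$ is multiplicative;
\item unitality $1x=x$ holds because $\Delta_H(1)=1\otimes 1$;
\item compatibility (2) for $M\otimes_A N$ holds because
$$h((m\otimes_A n)a)=h_1m\otimes_A (h_2n)(h_3.a)=(h(m\otimes_A n))(h_2.a)$$
after renumbering via coassociativity, using (2) for $N$ as a left $(H,A)$-bimodule. Here the right $A$-action on $M\otimes_A N$ is $(m\otimes_A n)a=m\otimes_A na$, which is well defined since $N$ is an $A$-bimodule.
\end{itemize}

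Part (2) is symmetric. The balancedness computation uses (2) for $M$ and (1) for $N$ exactly as above, and the left compatibility
$$h(a(m\otimes_A n))=(h_1.a)\,h_2(m\otimes_A n)$$
uses (1) for $M$. For part (3), $M\otimes_A N$ is an $A$-bimodule in the natural way. Condition (2) follows from part (1) applied with $M$ regarded as a left-right $(H,A)$-module, and condition (1) follows from part (2) applied with $N$ regarded as a left $(H,A)$-module. Both parts use the same diagonal $H$-action, so the structures are compatible.
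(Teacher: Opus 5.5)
Your proof is correct, and it is exactly the routine verification the paper has in mind when it calls the lemma "well known in the theory of Hopf algebra actions on rings." The only slip is notational: in the compatibility check for (1), the last expression should read $(h_1(m\otimes_A n))(h_2.a)$, not $(h(m\otimes_A n))(h_2.a)$.
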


\begin{proof} The results are well known in the theory of Hopf algebra action on rings.
\end{proof}

From Lemma 2.1, we deduce that $({}_{H,A}{\mathcal M}_A, \otimes_A,A)$ is a monoidal category. 

\begin{definition} An $(H,A)$-coring is an $A$-coring $({\mathcal C}, \blacktriangleright, \blacktriangleleft)$ which is also an $H$-module in such a way that it is  a left $(H,A)$-bimodule and the  homomorphisms of $A$-bimodules $\Delta_{\mathcal{C}} : \mathcal{C} \rightarrow \mathcal{C} \otimes_A \mathcal{C}$ and
$\epsilon_{\mathcal{C}}: {\mathcal{C}}\rightarrow A$ are also homomorphisms of $H$-modules.
In other words,  $\Delta_{\mathcal{C}} $ and $\epsilon_{\mathcal{C}}$ are homomorphisms of left $(H,A)$-bimodules. 
\end{definition}

We can call also an $(H,A)$-coring an $H$-module $A$-coring. Note that 
an $(H,A)$-coring is a coalgebra in the monoidal category ${}_{H,A}{\mathcal M}_A$. 

Let $\mathcal C$ be an $(H,A)$-coring. From the above definition, we have
$$\Delta_{\mathcal C}(h.c)=(h_1.c_1) \otimes (h_2.c_2), \quad \epsilon_{\mathcal C}(h.c)=h.\epsilon_{\mathcal C}(c) \quad \forall h \in H, c \in {\mathcal C}.$$
It follows from Lemma 2.1 that if $M$ is any left-right $(H,A)$-module, then $M \otimes_A{\mathcal C}$ is a left-right $(H,A)$-module with the $H$-diagonal action.

Let us denote by $\overline{A}$ the opposite algebra of the associative algebra $A$. If $a$ is an element of $A$, the notation $\overline{a}$ is the element $a$ considered as an element of $\overline{A}$. We know that the $k$-linear map $a \mapsto  \overline{a}$ is an antihomomorphism of algebras $A$ to  $\overline{A}$. The following three lemmas are examples of $(H,A)$-corings. We will give other examples in section 5.

\begin{lemma} (1) $\overline{A}$ is a left $(H,A)$-bimodule: $a. \overline{a'}.a''= \overline{aa'a''}$, $h.\overline{a}= \overline{h.a}$ for all $ h \in H$, $a',a'' \in A$ and $\overline{a} \in \overline{A}$.

(2) Let $S$ be a coalgebra over $k$. Then ${\mathcal C}=A \otimes_k S \otimes_k\overline{A}$ is an $(H,A)$-coring: the $A$-biaction is 
$$a' \blacktriangleright (a \otimes s \otimes \overline{b}) \blacktriangleleft a''=(a'a) \otimes s \otimes \overline{ba''}), \forall a,a',a'',b \in A, s \in S,$$ the $H$-module structure is the diagonal action:    
$$ h(a \otimes s\otimes \overline{b})=(h_1.a) \otimes s\otimes \overline{h_2.b}, \forall a,b \in A, s \in S, h \in H,$$  
$$\Delta_{\mathcal{C}}(a \otimes s\otimes \overline{b})=a \otimes s_1 \otimes 1_A \otimes s_2 \otimes \overline{b}, \forall a,b \in A, s \in S,$$ we have identified ${\mathcal C} \otimes_A{\mathcal C}$ with $A \otimes_k S\otimes_k \overline{A} \otimes_k S\otimes_k \overline{A}$ 
and $$\epsilon_{\mathcal{C}}(a \otimes s\otimes \overline{b})=\epsilon_S(s)ab, \forall s \in S, a, b \in A.$$   

(3) ${\mathcal C}=A \otimes_k\overline{A}$ is an $(H,A)$-coring.

\end{lemma}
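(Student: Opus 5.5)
Part (1) is a direct check. The $A$-bimodule structure on $\overline{A}$ given by $a.\overline{a'}.a''=\overline{aa'a''}$ is obviously associative and unital, because it is just multiplication in $A$. For the left $(H,A)$-module condition (1) we compute
$$h.(a.\overline{a'})=\overline{h.(aa')}=\overline{(h_1.a)(h_2.a')}=(h_1.a).(h_2.\overline{a'}).$$
Condition (2) follows in the same way:
$$h.(\overline{a'}.a'')=\overline{(h_1.a')(h_2.a'')}=(h_1.\overline{a'}).(h_2.a'').$$
Both use only the $H$-module algebra axiom on $A$.

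For (2), the plan has four steps.

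\emph{Bimodule and $H$-compatibility.} First, $A\otimes_k S\otimes_k\overline{A}$ is an $A$-bimodule via left multiplication on the first factor and the right action of (1) on the last factor; these actions act on different tensor factors and so commute. By Lemma 2.1(3), or directly using (1), the diagonal $H$-action with $S$ inert makes it a left $(H,A)$-bimodule. Condition (1) only involves the first factor, where it is the module algebra axiom. Condition (2) only involves the last factor, where it is part (1) of the present lemma.

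\emph{The identification.} Next we justify ${\mathcal C}\otimes_A{\mathcal C}\cong A\otimes S\otimes\overline{A}\otimes S\otimes\overline{A}$, induced by $\overline{A}\otimes_A A\cong\overline{A}$, $\overline{b}\otimes_A a\mapsto\overline{ba}$. In these terms $\Delta_{\mathcal C}(a\otimes s\otimes\overline b)=(a\otimes s_1\otimes\overline{1})\otimes_A(1\otimes s_2\otimes\overline{b})$.

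\emph{Coring axioms.} Coassociativity reduces to coassociativity of $S$. For the counit we compute
$$(\epsilon\otimes_A id)\Delta(a\otimes s\otimes\overline b)=\epsilon_S(s_1)a\blacktriangleright(1\otimes s_2\otimes\overline b)=a\otimes s\otimes\overline b,$$
and
$$(id\otimes_A\epsilon)\Delta(a\otimes s\otimes\overline b)=(a\otimes s_1\otimes\overline 1)\blacktriangleleft\epsilon_S(s_2)b=a\otimes s\otimes\overline b.$$
Bilinearity of $\Delta$ and $\epsilon$ is immediate: $\epsilon(a'\blacktriangleright x\blacktriangleleft a'')=a'\epsilon(x)a''$, since $\overline{ba''}$ is sent to $ab a''$.

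\emph{$H$-linearity of $\Delta$ and $\epsilon$.} This is the real content. For $\epsilon$ we have
$$\epsilon(h(a\otimes s\otimes\overline b))=\epsilon_S(s)(h_1.a)(h_2.b)=h.(\epsilon_S(s)ab).$$
For $\Delta$, the diagonal action on ${\mathcal C}\otimes_A{\mathcal C}$ gives
$$h\Delta(x)=(h_1.a\otimes s_1\otimes\overline{h_2.1})\otimes_A(h_3.1\otimes s_2\otimes\overline{h_4.b}).$$
Using $h.1=\epsilon_H(h)1$ this collapses to $(h_1.a\otimes s_1\otimes\overline1)\otimes_A(1\otimes s_2\otimes\overline{h_2.b})=\Delta(hx)$.

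The main obstacle is this last step, more precisely making sure that the diagonal action on ${\mathcal C}\otimes_A{\mathcal C}$ is well defined and compatible with the identification in the second step. That is guaranteed by Lemma 2.1(3), so it remains only to track the counit of $H$ as above.

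Part (3) is the special case $S=k$ with its trivial coalgebra structure $\Delta(1)=1\otimes1$ and $\epsilon(1)=1$, using $A\otimes_k k\otimes_k\overline A\cong A\otimes_k\overline A$.
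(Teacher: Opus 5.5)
Your proposal is correct and is essentially the paper's argument: a direct check that $A\otimes_k S\otimes_k\overline{A}$ is a left $(H,A)$-bimodule and that $\Delta_{\mathcal C}$ and $\epsilon_{\mathcal C}$ are $A$-bilinear and $H$-linear, with (3) obtained by taking $S=k$. You are more careful than the paper in verifying the coring axioms and in making explicit the use of $h.1_A=\epsilon_H(h)1_A$ for the $H$-linearity of $\Delta_{\mathcal C}$. One small point: Lemma 2.1(3) concerns $\otimes_A$, so it does not literally give the bimodule structure on the $\otimes_k$-product, but your direct factor-by-factor check already covers this.
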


\begin{proof} (1) The proof is easy and well known in the theory of Hopf algebra actions on rings.

(2) Clearly, $A \otimes_k S\otimes_k\overline{A}$ is an $A$-bimodule for the given structures. By (1), $\overline{A}$ is an $H$-module. It is well known that $A \otimes_k S\otimes_k\overline{A}$ is an $H$-module for the given $H$-action. Let $h \in H$, $a,a',b \in A$. We have 
$$\begin{array}{rcl} h.[a' \blacktriangleright(a \otimes s\otimes \overline{b})] &=& h.(a'a \otimes s\otimes \overline{b})\\
&=& h_1.(a'a) \otimes s\otimes \overline{h_2.b} \\
&=& (h_1.a')(h_2.a) \otimes s\otimes \overline{h_3.b} \\
&=& (h_1.a')[(h_2.a) \otimes s\otimes \overline{h_3.b}] \\
&=& (h_1.a') \blacktriangleright [h_2.a(\otimes s\otimes \overline {b}] \end{array}$$
and
$$\begin{array}{rcl} h.[(a \otimes s\otimes \overline{b}) \blacktriangleleft a'] &=& h.(a \otimes s\otimes \overline{ba'}) \\
&=& (h_1.a) \otimes s\otimes \overline{h_2.(ba')} \\
&=& (h_1.a) \otimes s\otimes \overline{(h_2.b)(h_3.a')} \\
&=& (h_1.a) \otimes s\otimes \overline{(h_2.b)(h_3.a')} \\
&=& [(h_1.a) \otimes s\otimes \overline{(h_2.b)}]\blacktriangleleft (h_3.a') \\
&=& [h_1.(a \otimes s\otimes \overline{b}]\blacktriangleleft (h_2.a').\end{array}$$
Thus $A \otimes_k S\otimes_k\overline{A}$ is a left $(H,A)$-bimodule. We have
$$\begin{array}{rcl}\epsilon_{\mathcal{C}}[h.(a \otimes s\otimes \overline{b})]&=& \epsilon_{\mathcal{C}}((h_1.a) \otimes s \otimes \overline{h_2.b})\\
&=& \epsilon_S(s)(h_1.a)(h_2. b) \\
&=& \epsilon_S(s)(h.(ab)) =h.(\epsilon_S(s)ab)=h.\epsilon_{\mathcal{C}}(a \otimes s\otimes \overline{b}), \end{array}$$
So $\epsilon_{\mathcal C}$ is a homomorphism of $H$-modules. Now we have
$$\epsilon_{\mathcal{C}}[(a \otimes s \otimes \overline{b}) \blacktriangleleft a']= \epsilon_{\mathcal{C}}(a \otimes s\otimes \overline{ba'})=\epsilon_S(s)aba'=\epsilon_{\mathcal{C}}(a \otimes s\otimes \overline{b})a';$$
so $\epsilon_{\mathcal{C}}$ is right $A$-linear. $\epsilon_{\mathcal{C}}$ is clearly left $A$-linear.

We also have
$$\begin{array}{rcl} \Delta_{\mathcal{C}}[h.(a \otimes s \otimes \overline{b})]&=& \Delta_{\mathcal{C}}[(h_1.a) \otimes s \otimes \overline{h_2.b}] \\
&=&[(h_1.a) \otimes s_1 \otimes_A \otimes s_2 \otimes \overline{h_2.b}] \\
&=& h.(a \otimes s_{1}\otimes 1_A \otimes s_{2}\otimes \overline{b} )\\
&=& h\Delta_{\mathcal{C}}(a \otimes s\otimes \overline{b});\end{array}$$
so $\Delta_{\mathcal{C}}$ is a homomorphism of $H$-modules. We have
$$\begin{array}{rcl} \Delta_{\mathcal{C}}[(a \otimes s \otimes \overline{b}) \blacktriangleleft a']&=& \Delta_{\mathcal{C}}(a \otimes s \otimes \overline{ba'}) \\
&=& (a \otimes s_{1}\otimes 1_A \otimes s_{2}\otimes \overline{ba'}) \\
&=& (a \otimes s_{1}\otimes 1_A \otimes s_{2}\otimes \overline{b})a'\\
&=&\Delta_{\mathcal{C}}(a \otimes s \otimes \overline{b})a';
\end{array}$$
so $\Delta_{\mathcal{C}}$ is right $A$-linear. Clearly, $\Delta_{\mathcal{C}}$ is left $A$-linear.

(3) Take $S=k$ in (2).
\end{proof}

\begin{lemma} (1) Let $S$ be a coalgebra over $k$. Then ${\mathcal C}=A \otimes_k S \otimes_k A$ is an $(H, A)$-coring: the $A$-bimodule action  is given by 
$$a' \blacktriangleright (a \otimes s \otimes b)\blacktriangleleft a''=(a'a) \otimes s \otimes (ba''), \forall a,a',a'',b \in A, s \in S,$$ the $H$-module structure is the diagonal action : 
$$h.(a \otimes s\otimes b)=(h_1.a) \otimes s \otimes (h_2.b), \forall h \in H, a,b \in A, s \in S,$$  
$$\Delta_{\mathcal{C}}(a \otimes s\otimes b)=a \otimes s_1 \otimes 1_A \otimes s_2  \otimes b, \forall a,b \in A, s \in S,$$ we have identified ${\mathcal C} \otimes_A{\mathcal C}$ with $A \otimes_k S\otimes_k A \otimes_k S \otimes_k A$,
and 
$$\epsilon_{\mathcal{C}}(a \otimes s\otimes b)=a\epsilon_S(s)b, \forall a, b \in A, s \in S.$$ 

(2) ${\mathcal C}=A \otimes_k A$ is an $(H,A)$-coring. 

\end{lemma}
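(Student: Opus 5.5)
This is a direct verification that parallels the proof of Lemma 2.3(2), with $\overline{A}$ replaced by $A$. The order of checks is: (i) $\mathcal C$ is an $A$-bimodule and an $H$-module; (ii) the compatibility conditions (1) and (2) hold, so $\mathcal C$ is a left $(H,A)$-bimodule; (iii) $\mathcal C$ is an $A$-coring; (iv) $\Delta_{\mathcal C}$ and $\epsilon_{\mathcal C}$ are $H$-linear. Part (2) then follows by taking $S=k$ in (1), with $k$ the trivial coalgebra. This gives $\mathcal C=A\otimes_k A$, $\Delta_{\mathcal C}(a\otimes b)=a\otimes 1_A\otimes b$ and $\epsilon_{\mathcal C}(a\otimes b)=ab$.

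For (i), the bimodule structure is clear. The $H$-action is the diagonal action on $A\otimes_k S\otimes_k A$, with $S$ carrying the trivial action. For (ii), the left compatibility is
$$h.[a'\blacktriangleright(a\otimes s\otimes b)]=(h_1.a')(h_2.a)\otimes s\otimes (h_3.b)=(h_1.a')\blacktriangleright[h_2.(a\otimes s\otimes b)],$$
using only that $A$ is an $H$-module algebra and coassociativity. The right compatibility is
$$h.[(a\otimes s\otimes b)\blacktriangleleft a']=(h_1.a)\otimes s\otimes (h_2.b)(h_3.a')=[h_1.(a\otimes s\otimes b)]\blacktriangleleft(h_2.a').$$
For (iii), we identify $\mathcal C\otimes_A\mathcal C\cong A\otimes S\otimes A\otimes S\otimes A$ via $(a\otimes s\otimes b)\otimes_A(a'\otimes t\otimes b')\mapsto a\otimes s\otimes ba'\otimes t\otimes b'$. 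Coassociativity of $\Delta_{\mathcal C}$ then reduces to coassociativity of $S$. For the counit property, $(\epsilon_{\mathcal C}\otimes_A id)\Delta_{\mathcal C}(a\otimes s\otimes b)=a\epsilon_S(s_1)\blacktriangleright(1\otimes s_2\otimes b)=a\otimes s\otimes b$, and symmetrically on the other side. Bimodule linearity of $\Delta_{\mathcal C}$ and $\epsilon_{\mathcal C}$ is immediate.

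The main point needing care is (iv), the $H$-linearity of $\Delta_{\mathcal C}$. Under the identification above, the diagonal $H$-action on $\mathcal C\otimes_A\mathcal C$ from Lemma 2.1(3) becomes the diagonal action on the three $A$-factors. This transport is legitimate because $h.(ba')=(h_1.b)(h_2.a')$, so the isomorphism is $H$-linear. The computation is then
$$h.\Delta_{\mathcal C}(a\otimes s\otimes b)=(h_1.a)\otimes s_1\otimes (h_2.1_A)\otimes s_2\otimes (h_3.b)=(h_1.a)\otimes s_1\otimes 1_A\otimes s_2\otimes(h_2.b),$$
using $h.1_A=\epsilon_H(h)1_A$. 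The right-hand side equals $\Delta_{\mathcal C}(h.(a\otimes s\otimes b))$. Finally, $\epsilon_{\mathcal C}(h.(a\otimes s\otimes b))=\epsilon_S(s)(h_1.a)(h_2.b)=h.(\epsilon_S(s)ab)$, so $\epsilon_{\mathcal C}$ is $H$-linear.
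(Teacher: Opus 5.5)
Your proposal is correct and takes the same route as the paper, whose proof simply says the argument is a direct verification analogous to Lemma 2.3(2). Your explicit transport of the diagonal $H$-action through ${\mathcal C}\otimes_A{\mathcal C}\cong A\otimes S\otimes A\otimes S\otimes A$ when checking $H$-linearity of $\Delta_{\mathcal C}$ is more careful than the paper's corresponding step, and obtaining (2) by setting $S=k$ matches how the paper derives Lemma 2.3(3).
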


\begin{proof} The proof is similar to that of Lemma 2.3.
\end{proof}

Let $M$ be a left-right $(H,A)$-module. The $H$-invariant elements of $M$ is
$$M^H=\{m \in M; hm=\epsilon_H(h)m \quad \forall \quad h \in H \}.$$
It is clear that $A^H$ is a trivial $H$-subalgebra of $A$. We will give our last example of an $(H,A)$-coring.

\begin{lemma} Let $B$ be an $H$-subalgebra of $A^H$  and ${\mathcal C}=A \otimes_BA$. Then $\mathcal C$ is an $(H,A)$-coring: the $A$-biaction is the natural one, the $H$-module structure is the diagonal action, $\Delta_{\mathcal{C}}(a \otimes_B a')=a \otimes_B 1_A \otimes_B a'$ and $\epsilon_{\mathcal{C}}(a \otimes_B a')=aa'$ for all $a,a' \in A$: we have identified ${\mathcal C} \otimes_A{\mathcal C}$ with $A \otimes_BA\otimes_BA$. 
\end{lemma}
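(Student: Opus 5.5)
We verify directly that $\mathcal C=A\otimes_BA$, with the structure in the statement, satisfies Definition 2.2. We first recall the standard fact that $A\otimes_BA$ is an $A$-coring (the Sweedler coring) for any subalgebra $B\subseteq A$. The maps are
\[
\Delta_{\mathcal C}(a\otimes_Ba')=a\otimes_B1_A\otimes_Ba'
\qquad\text{and}\qquad
\epsilon_{\mathcal C}(a\otimes_Ba')=aa'.
\]
Both are well defined on the balanced tensor product, since $ab\otimes a'\mapsto aba'$ is $B$-balanced. The identification $\mathcal C\otimes_A\mathcal C\cong A\otimes_BA\otimes_BA$ is given by $(a\otimes_Ba')\otimes_A(a''\otimes_Ba''')\mapsto a\otimes_Ba'a''\otimes_Ba'''$. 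Coassociativity and counitality are then immediate: both sides of coassociativity give $a\otimes_B1\otimes_B1\otimes_Ba'$, and applying the counit on either side returns $a\otimes_Ba'$. Both maps are clearly $A$-bimodule maps for the natural biaction. So the remaining work is entirely about the $H$-structure.

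The key step, and the only place where the hypothesis $B\subseteq A^H$ is used, is that the diagonal action $h.(a\otimes_Ba')=(h_1.a)\otimes_B(h_2.a')$ is well defined on $A\otimes_BA$. We check that the map $A\times A\to A\otimes_BA$, $(a,a')\mapsto (h_1.a)\otimes_B(h_2.a')$, is $B$-balanced. For $b\in B$,
\[
(h_1.(ab))\otimes_B(h_2.a')=(h_1.a)(h_2.b)\otimes_B(h_3.a')=(h_1.a)\otimes_B(h_2.b)(h_3.a').
\]
On the other side,
\[
(h_1.a)\otimes_B(h_2.(ba'))=(h_1.a)\otimes_B(h_2.b)(h_3.a').
\]
These agree, so the map is balanced. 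We would also like to use $h.b=\epsilon_H(h)b$, which gives $(h_1.a)(h_2.b)=(h_1.a)b$. Here there is a subtlety: for balancedness we need to move $h_2.b$ across the tensor sign, and that requires $h_2.b\in B$. Taking $b\in A^H$ and using $h.b=\epsilon_H(h)b$, we get
\[
(h_1.a)(h_2.b)\otimes_B(h_3.a')=(h_1.a)b\otimes_B(h_2.a')=(h_1.a)\otimes_B b(h_2.a'),
\]
and this equals the other side by the same computation. This part is the main point to check carefully. It shows that $H$ acts on $\mathcal C$, and that this is a module action, because the diagonal action on $A\otimes A$ is a module action and descends to the quotient.

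Compatibility $(1)$ and $(2)$ (the left $(H,A)$-bimodule structure) then follows exactly as in Lemma 2.3(2): for instance
\[
h.(a''a\otimes_Ba')=(h_1.a'')(h_2.a)\otimes_B(h_3.a')=(h_1.a'')\,[h_2.(a\otimes_Ba')],
\]
and similarly on the right. Next, $\epsilon_{\mathcal C}$ is $H$-linear because $h.(aa')=(h_1.a)(h_2.a')$. For $\Delta_{\mathcal C}$, using $h.1_A=\epsilon_H(h)1_A$ we compute
\[
h.(a\otimes_B1_A\otimes_Ba')=(h_1.a)\otimes_B(h_2.1_A)\otimes_B(h_3.a')=(h_1.a)\otimes_B1_A\otimes_B(h_2.a').
\]
This equals $\Delta_{\mathcal C}(h.(a\otimes_Ba'))$, provided the diagonal $H$-action on $\mathcal C\otimes_A\mathcal C$ from Lemma 2.1(3) corresponds, under the identification above, to the three-fold diagonal action. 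That correspondence holds by the $H$-module algebra property. Hence $\Delta_{\mathcal C}$ and $\epsilon_{\mathcal C}$ are homomorphisms of left $(H,A)$-bimodules, and $\mathcal C$ is an $(H,A)$-coring.
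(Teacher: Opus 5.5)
Your proposal is correct and follows essentially the same route as the paper: take the Sweedler coring structure as known, show the diagonal $H$-action descends to $A\otimes_BA$ by checking $B$-balancedness, and then verify the two bimodule compatibilities and the $H$-linearity of $\epsilon_{\mathcal C}$ and $\Delta_{\mathcal C}$ directly. You give more detail than the paper on $\Delta_{\mathcal C}$ and on the identification $\mathcal C\otimes_A\mathcal C\cong A\otimes_BA\otimes_BA$; the only cosmetic difference is that the paper moves $h_2.b$ across $\otimes_B$ using just that $B$ is $H$-stable, whereas you use $h.b=\epsilon_H(h)b$, and both are valid under the stated hypothesis.
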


\begin{proof} It is well known that  ${\mathcal C}=A \otimes_BA$ is an $A$-coring for the given structures. For $a,a' \in A,b \in B$ and $h \in H$, we have
$$\begin{array}{rcl} h.[(ab) \otimes_B a']&=& [h_1.(ab)] \otimes_B (h_2.a')\\
&=& [(h_1.a)(h_2.b)] \otimes_B (h_3.a')\\
&=& (h_1.a) \otimes_B[(h_2.b)(h_3.a')]\\
&=& (h_1.a) \otimes_B[(h_2.(ba')]\\
&=& h.(a \otimes_B(ba')).\end{array}$$ 
So the $H$-action is well defined. It is clear that $A \otimes_BA$ is an $H$-module for the diagonal action. As in Lemma 2.3, we can show that $\Delta_{\mathcal{C}}$ and $\epsilon_{\mathcal{C}}$ are $H$-module maps. We have
$$\begin{array}{rcl} h.[(a \otimes_B a') \blacktriangleleft a'']&=& h.(a \otimes_B a'a'')\\
&=& (h_1.a) \otimes_B h_2.(a'a'')\\
&=& (h_1.a) \otimes_B [h_2.a')(h_3.a'')]\\
&=& [(h_1.a) \otimes_B [h_2.a')] \blacktriangleleft(h_3.a'')\\
&=& [h_1.(a \otimes_B a')] \blacktriangleleft(h_2.a'').\end{array}$$
Similarly, we show that 
$$ h.[a'' \blacktriangleright (a \otimes_B a')]=(h_1.a'') \blacktriangleright [h_2.( a \otimes_B a')].$$
\end{proof}

\begin{definition} Let $\mathcal C$ be an $(H,A)$-coring. A right $(H,{\mathcal C})$-comodule $M$ is a right $\mathcal C$-comodule $M$ which is also a left-right $(H,A)$-module and the right $A$-linear map $\rho_{M , {\mathcal C}}$ is also a homomorphism of left-right $(H,A)$-modules. 
\end{definition}

From Definition 2.6, $M$ is a right $(H,{\mathcal C})$-comodule if
$$(id_M\otimes_A \Delta_{\mathcal C}) \circ \rho_{M , {\mathcal C}}=(\rho_{M , {\mathcal C}} \otimes_A id_{\mathcal C}) \circ \rho_{M , {\mathcal C}}, (id_M \otimes_A \epsilon_{\mathcal C})\circ \rho_{M , {\mathcal C}}=id_M$$ and 
$$\rho_{M , {\mathcal C}}(ma)=m_0 \otimes_A (m_1a), \quad \hbox{and} \quad \rho_{M , {\mathcal C}}(hm)=(h_1m_0) \otimes_A (h_2m_1), \quad \forall m \in M,a \in A;$$ where $\rho_{M , {\mathcal C}} : M \rightarrow M \otimes_A {\mathcal C}; m \mapsto m_0 \otimes_A m_1$.

Note that a right $(H,{\mathcal C})$-comodule is a right $\mathcal C$-comodule in the monoidal category $_H{\mathcal M}$ of left $H$-modules. So we can also call a right $(H,{\mathcal C})$-comodule an $H$-module right $\mathcal C$-comodule.

Let $\mathcal C$ be an $(H,A)$-coring. Then $\mathcal C$ is a right $(H,{\mathcal C})$-comodule with $\rho_{{\mathcal C}, {\mathcal C}}=\Delta_{\mathcal C}$. We will see in section 4 a condition which ensures that the $H$-algebra $A$ is a right $(H,{\mathcal C})$-comodule. Let $M$ be a right $(H,{\mathcal C})$-comodule. By the proof of \cite[18.9(1)]{brWi}, $M \otimes_A {\mathcal C}$ is a right ${\mathcal C}$-comodule with the ${\mathcal C}$-coaction given by $id_M \otimes \Delta_{\mathcal C}$. Since $id_M \otimes \Delta_{\mathcal C}$ is an $H$-linear map, $M \otimes_A {\mathcal C}$ is a right $(H,{\mathcal C})$-comodule.

Let $M$ and $N$ be right $(H,{\mathcal C})$-comodules. A homomorphism of right $(H,{\mathcal C})$-comodules (or a right $(H,{\mathcal C})$-colinear map) $f: M \rightarrow N$ is a left-right $(H,A)$-linear map such that
$$\rho_{N , {\mathcal C}} \circ f=(f\otimes_A id_M)\circ \rho_{M , {\mathcal C}}.$$
So a right $(H,{\mathcal C})$-colinear map is a right $\mathcal C$-colinear map which is also a homomorphism of $H$-modules. We can define in a similar way a left $(H,{\mathcal C})$-comodule for an $(H,A)$-coring $\mathcal C$ and a homomorphism of  left $(H,{\mathcal C})$-comodules. We denote the set of homomorphisms of right $(H,{\mathcal C})$-comodules from $M$ to $N$ by $_HHom^{\mathcal C}(M , N)$. So $_HHom^{\mathcal C}(M , N)$ is a $k$-submodule of the $k$-module $Hom^{\mathcal C}(M , N)$ of right $\mathcal C$-colinear maps from $M$ to $N$. Let us denote by $_H{\mathcal M}^{\mathcal C}$ the category formed by right $(H,{\mathcal C})$-comodules and homomorphisms of right $(H,{\mathcal C})$-comodules. Then $_H{\mathcal M}^{\mathcal C}$ is a subcategory of the category ${\mathcal M}^{\mathcal C}$ of right ${\mathcal C}$-comodules.

Let $\mathcal C$ be an $(H,A)$-coring. So $\mathcal C$ is an $A$-coring. Let us consider the left dual $^{\prime}{\mathcal C}$ of ${\mathcal C}$. We recall that $^{\prime}{\mathcal C}$ is an algebra under the product defined by
$$(f\#g)(c)=g(c_1 \blacktriangleleft  (f(c_2))$$
for all left $A$-linear maps $f,g: {\mathcal C}\rightarrow A$ and $c \in {\mathcal C}$.

\begin{lemma} Let $\mathcal C$ be an $(H,A)$-coring. Assume that $\mathcal C$ is left $A$-flat. Then $_H{\mathcal M}^{\mathcal C}$ is a Grothendieck category.
\end{lemma}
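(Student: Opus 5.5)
The plan is to realize $_H{\mathcal M}^{\mathcal C}$ as the category of comodules over a comonad on the Grothendieck category $_H{\mathcal M}_A\cong {}_{A^{op}\#H}{\mathcal M}$, and then to transfer the Grothendieck axioms along the forgetful functor. By \cite[Proposition 8.4.1]{Caen}, $_H{\mathcal M}_A$ is a module category, hence Grothendieck, with a generator $A^{op}\#H$. By Lemma 2.1, the functor $G=-\otimes_A{\mathcal C}: {}_H{\mathcal M}_A\to {}_H{\mathcal M}_A$ (diagonal $H$-action) is well defined. Since $\Delta_{\mathcal C}$ and $\epsilon_{\mathcal C}$ are morphisms of left $(H,A)$-bimodules, $G$ is a comonad, and Definition 2.6 says precisely that $_H{\mathcal M}^{\mathcal C}$ is its category of comodules (Eilenberg--Moore coalgebras). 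The flatness hypothesis on ${}_A{\mathcal C}$ means $G$ is exact, since kernels and cokernels in $_H{\mathcal M}_A$ are computed on underlying $k$-modules. In particular $G$ preserves all colimits, because tensor products always do.

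\emph{Step 1: abelian structure and colimits.} For a morphism $f:M\to N$ in $_H{\mathcal M}^{\mathcal C}$, take $\ker f$ and $\mathrm{coker} f$ in $_H{\mathcal M}_A$. The cokernel inherits a coaction because $G$ is right exact. For the kernel, left exactness of $G$ (flatness) gives $\ker f\otimes_A{\mathcal C}=\ker(f\otimes_A{\mathcal C})$, so $\rho_M$ restricts to $\ker f\to \ker f\otimes_A{\mathcal C}$. Both coactions are $H$-linear because they are restrictions or inductions of $H$-linear maps. The same argument applies to arbitrary direct sums, since $G$ commutes with them. Hence $_H{\mathcal M}^{\mathcal C}$ is abelian and cocomplete, and the forgetful functor $U$ to $_H{\mathcal M}_A$ is exact and creates colimits. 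Direct limits are then exact in $_H{\mathcal M}^{\mathcal C}$, because they are exact in $_H{\mathcal M}_A$ and $U$ reflects exactness.

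\emph{Step 2: a generator.} This is the main obstacle, and I will handle it with the standard finiteness argument for comodules adapted as in \cite{brWi}. The functor $U$ has a right adjoint $M\mapsto M\otimes_A{\mathcal C}$, which is a right $(H,{\mathcal C})$-comodule as noted after Definition 2.6. Via the counit and coaction, every comodule $M$ embeds as a subcomodule of $M\otimes_A{\mathcal C}$. To produce a generating set, I will show that every $m\in M$ lies in a subcomodule of $M$ whose cardinality is bounded by a cardinal $\kappa$ depending only on $|A|$, $|H|$, $|{\mathcal C}|$ and $|k|$.

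To build such a subcomodule, start from $m$. Then repeatedly close under the $A$- and $H$-actions and under ``coefficients'' of the coaction: write $\rho(x)=\sum x_i\otimes c_i$ and adjoin the $x_i$. Take the union of this countable chain. This union is a left-right $(H,A)$-submodule $N$ with $\rho(N)\subseteq \mathrm{Im}(N\otimes_A{\mathcal C}\to M\otimes_A{\mathcal C})$. Flatness of ${\mathcal C}$ makes this map injective, so $N$ is a subcomodule. The $H$-closure is compatible because $\rho(hx)=h_1x_0\otimes h_2x_1$ involves only elements $h_1x_0$, which are already adjoined.

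Isomorphism classes of comodules of cardinality at most $\kappa$ form a set, and their direct sum is a generator. Every comodule is then a sum, hence a quotient of a direct sum, of such subcomodules. Together with Step 1, this shows that $_H{\mathcal M}^{\mathcal C}$ is a Grothendieck category.
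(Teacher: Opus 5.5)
Your proof is correct, and it takes a different (and more careful) route than the paper.

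\textbf{Abelian and AB5 part.} Here the underlying idea is the same. The paper only asserts that kernels, cokernels, direct sums and exact direct limits exist in $_H{\mathcal M}^{\mathcal C}$ ``because they are true in $_H{\mathcal M}$ and ${\mathcal M}^{\mathcal C}$'', the latter being Grothendieck by \cite[18.13]{brWi}. You instead realize $_H{\mathcal M}^{\mathcal C}$ as the Eilenberg--Moore category of the comonad $-\otimes_A{\mathcal C}$ on $_H{\mathcal M}_A$. This shows that the forgetful functor creates colimits for free, and that flatness is needed exactly for kernels and for restricting coactions to subobjects. At that point you should add one detail: coassociativity of a restricted coaction also uses injectivity of $N\otimes_A{\mathcal C}\otimes_A{\mathcal C}\to M\otimes_A{\mathcal C}\otimes_A{\mathcal C}$. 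This follows from left flatness of ${\mathcal C}$ applied twice.

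\textbf{Generator.} This is where the two arguments really differ. The paper argues ``as in \cite[Proposition 5]{caeMilZhu}'' that the subcomodules of ${\mathcal C}$ form a generating family. That would give an explicit generating set, but taken literally it cannot hold in the equivariant setting. Take $A={\mathcal C}=k$ and $H=kG$ with $G\neq 1$. Then $_H{\mathcal M}^{\mathcal C}={}_H{\mathcal M}$, and the subcomodules of ${\mathcal C}$ are ideals of $k$ with trivial $G$-action. Every quotient of a direct sum of trivial $G$-modules is trivial, so these subcomodules do not generate.

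Your argument shows that every element lies in a subcomodule of cardinality bounded by a fixed cardinal $\kappa$. You build it as a countable union, closing under the actions and adjoining coaction coefficients. This is the standard accessibility argument for a left exact, colimit-preserving comonad. It gives no concrete description of the generators. In exchange, it works for any bialgebra $H$ and any left $A$-flat $(H,A)$-coring, and it actually supplies the step the paper only sketches.
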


\begin{proof} It is well known that $_H{\mathcal M}$ is a Grothendieck category. By \cite[18.6 or 18.13]{brWi}, the category ${\mathcal M}^{\mathcal C}$ is a Grothendieck category since $\mathcal C$ is left $A$-flat. 

(1) A 0-object exists in $_H{\mathcal M}^{\mathcal C}$. If $M$ and $N$ are right $(H,{\mathcal C})$-comodules, then $_HHom^{\mathcal C}(M,N)$ is an additive abelian group. The composition of morphisms in $_H{\mathcal M}^{\mathcal C}$ is $k$-bilinear. If $M$ and $N$ are two objects of $_H{\mathcal M}^{\mathcal C}$, the direct sum $M \oplus N$ is an object of $_H{\mathcal M}^{\mathcal C}$. It follows that $_H{\mathcal M}^{\mathcal C}$ is an additive category. 

(2) Let $f: M \rightarrow N$ in $_H{\mathcal M}^{\mathcal C}$. Then $Ker f$ and $coker f$ are right $(H,{\mathcal C})$-comodules. We have $M/Ker f \simeq Im f$ (the first isomorphism theorem) in $_H{\mathcal M}^{\mathcal C}$. It follows that $_H{\mathcal M}^{\mathcal C}$ is an abelian category.

(3) If $\{M_{\lambda}, \lambda \in \Lambda\}$ is a family of right $(H,{\mathcal C})$-comodules, then $\oplus M_{\lambda}$ is a right $(H,{\mathcal C})$-comodule. It follows that $_H{\mathcal M}^{\mathcal C}$ is a cocomplete abelian category (an AB-3 category). 

(4) Direct limit is exact in $_H{\mathcal M}^{\mathcal C}$. So $_H{\mathcal M}^{\mathcal C}$ is an (AB-V)-category. 

All the properties from (1) to (4) are true because they are true in $_H{\mathcal M}$ and ${\mathcal M}^{\mathcal C}$.

(5) If $M$ is an object of $_H{\mathcal M}^{\mathcal C}$, then so is $M \otimes {\mathcal C}$ with $\mathcal C$-coaction induced by $\mathcal C$. As in \cite[Proposition 5]{caeMilZhu}, we can show that $\mathcal C$ is a generator of $M \otimes {\mathcal C}$ and that the $\mathcal C$-subcomodules of $\mathcal C$ form a family of generators for $_H{\mathcal M}^{\mathcal C}$.
\end{proof}

In a Grothendieck category $\mathcal A$, an object $\Lambda$ is finitely generated if for each directed family $\{\Lambda_i\}_{i \in I}$ of subobjects of $\Lambda$, with $\Lambda=\cup_{i \in I}\Lambda_i$, there exists $j \in I$ such that $\Lambda= \Lambda_j$. An object $\Lambda$ is finitely presented if $\Lambda$ is finitely generated and every epimorphism $ N \rightarrow \Lambda$ with $N$ finitely generated has a finitely generated kernel \cite{CuadraSimson}. It is known that $\Lambda$ is finitely presented (finitely generated) if and only if the functor $Hom_{\mathcal A}(\Lambda,-)$ commutes with inductive limits (direct sums) \cite{Gabriel}, \cite[Chapter 2]{Popescu} and \cite[Chapter 5]{Stenstrom}. 

\begin{lemma} Let $\mathcal C$ be an $(H,A)$-coring. Assume that $\mathcal C$ is left $A$-flat. Let $P$ and $I$ be objects of $_H{\mathcal M}^{\mathcal C}$. Then

(1) the functor $_HHom^{\mathcal C}(P , -): {}_H{\mathcal M}^{\mathcal C} \rightarrow {\mathcal M}$ is covariant and left exact. It is exact if and only if $P$ is projective in $_H{\mathcal M}^{\mathcal C}$.

(2) the functor $_HHom^{\mathcal C}(- , I): {}_H{\mathcal M}^{\mathcal C} \rightarrow {\mathcal M}$ is contravariant and left exact. It is exact if and only if $I$ is injective in $_H{\mathcal M}^{\mathcal C}$.
\end{lemma}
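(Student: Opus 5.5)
The plan is to reduce everything to standard facts about Hom functors in an abelian category. By Lemma 2.7, ${}_H{\mathcal M}^{\mathcal C}$ is a Grothendieck category, and in particular abelian. The only categorical input needed is that kernels, cokernels and exactness there can be computed as in ${\mathcal M}^{\mathcal C}$ (hence as right $A$-modules), which is how Lemma 2.7 was proved.

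\textbf{Functoriality and additivity.} For $f\colon M\to N$ in ${}_H{\mathcal M}^{\mathcal C}$, define ${}_HHom^{\mathcal C}(P,f)(g)=f\circ g$ and ${}_HHom^{\mathcal C}(f,I)(g)=g\circ f$. These are again right $A$-linear, $H$-linear and ${\mathcal C}$-colinear, because each of these properties is stable under composition. The maps are $k$-linear, so we obtain a covariant, respectively contravariant, additive functor into ${\mathcal M}$.

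\textbf{Left exactness of (1).} Take an exact sequence $0\to M'\xrightarrow{u} M\xrightarrow{v} M''$ in ${}_H{\mathcal M}^{\mathcal C}$. Since kernels are computed as in ${\mathcal M}^{\mathcal C}$, $u$ is injective and ${\rm Im}\,u=\ker v$ as sets.
\begin{itemize}
\item Injectivity of $u\circ-$ is immediate.
\item If $v\circ g=0$, then $g$ factors set-theoretically as $g=u\circ g'$.
\item The map $g'$ is in ${}_HHom^{\mathcal C}(P,M')$: $H$-linearity and $A$-linearity follow because $u$ is injective and $H$-, $A$-linear.
\item Colinearity of $g'$ needs $u\otimes_A id_{\mathcal C}$ to be injective, which holds because ${\mathcal C}$ is left $A$-flat. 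This is the one place flatness enters.
\end{itemize}
Alternatively, invoke the universal property of the kernel in the abelian category ${}_H{\mathcal M}^{\mathcal C}$ directly; this is the cleanest route.

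\textbf{Left exactness of (2).} Dually, take $M'\xrightarrow{u} M\xrightarrow{v} M''\to 0$ exact. If $g\circ u=0$, then $g$ factors through ${\rm coker}\,u\cong M''$ by the universal property of cokernels in ${}_H{\mathcal M}^{\mathcal C}$. The factorization is a morphism there, and it is unique because $v$ is an epimorphism.

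\textbf{The exactness criteria.} Given left exactness, exactness of ${}_HHom^{\mathcal C}(P,-)$ amounts to preserving epimorphisms. That is, every $M\to M''$ epi induces a surjection ${}_HHom^{\mathcal C}(P,M)\to {}_HHom^{\mathcal C}(P,M'')$, which is precisely the definition of $P$ projective in ${}_H{\mathcal M}^{\mathcal C}$. Conversely, projectivity gives right exactness on short exact sequences. Dually, exactness of ${}_HHom^{\mathcal C}(-,I)$ is equivalent to $I$ being injective.

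The main obstacle is merely ensuring that kernels and cokernels in ${}_H{\mathcal M}^{\mathcal C}$ are the underlying ones. This is where left $A$-flatness of ${\mathcal C}$ is used, and it is already established in Lemma 2.7. Beyond that, the statement is the general fact that Hom functors on an abelian category are left exact.
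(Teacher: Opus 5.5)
Your proposal is correct and matches the paper's approach. The paper states this lemma without proof, treating it as the standard left exactness of Hom functors in the abelian (indeed Grothendieck) category ${}_H{\mathcal M}^{\mathcal C}$ of Lemma 2.7, together with the definitions of projective and injective objects, which is exactly your reduction. One small precision: left $A$-flatness of ${\mathcal C}$ is needed only to make kernels, not cokernels, the underlying ones, since $-\otimes_A{\mathcal C}$ is always right exact.
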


\begin{lemma} Let $H$ be a Hopf algebra with a bijective antipode.  

(1) $^{\prime}{\mathcal C}$ is a left $(H,A)$-bimodule: the $A$-biaction is defined by 
$$(af)(c)=f(c \blacktriangleleft a), (fa)(c)=f(c)a, \forall a\in A, c \in {\mathcal C}, f \in {}^{\prime}{\mathcal C}$$ 
and the $H$-action is given by : 
$$(hf)(c)=(h_2f(S_H^{-1}(h_1).c), \forall h \in H, c \in {\mathcal C}, f \in {}^{\prime}{\mathcal C}.$$

(2) $^{\prime}{\mathcal C}$ is an $H$-algebra, and we can form the smash product $^{\prime}{\mathcal C} \#H$.

(3) The algebra homomorphism $i: A \rightarrow {^{\prime}{\mathcal C}}$ defined by $i(a)(c)=\epsilon_{\mathcal C}(c)a$ is a homomorphism of $H$-modules, that is, $i$ is an $H$-algebra homomorphism. We have $af=i(a) \#f$ and $fa=f \# i(a)$.

\end{lemma}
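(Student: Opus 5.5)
Everything here is a direct verification from the definitions, with the antipode identities $S^{-1}(h_2)h_1=\epsilon_H(h)1_H=h_2S^{-1}(h_1)$ as the main tool. I will do the three parts in order, since (2) uses (1) and (3) uses (2).

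\textbf{Part (1).} First, $hf$ must again be left $A$-linear. For $a\in A$ we have
$$(hf)(a\blacktriangleright c)=h_2.f\bigl((S^{-1}(h_1)_1.a)\blacktriangleright (S^{-1}(h_1)_2.c)\bigr),$$
using that $\mathcal C$ is a left $(H,A)$-module. Since $\Delta(S^{-1}(h))=S^{-1}(h_2)\otimes S^{-1}(h_1)$, this equals
$$h_3.\bigl[(S^{-1}(h_2).a)\,f(S^{-1}(h_1).c)\bigr]=(h_3S^{-1}(h_2).a)\,\bigl(h_4.f(S^{-1}(h_1).c)\bigr)=a\,(hf)(c),$$
where the first step uses that $f$ is left $A$-linear and the second that $A$ is an $H$-module algebra. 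Next, $1f=f$, and $(hh')f=h(h'f)$ follows from $\Delta(hh')=\Delta(h)\Delta(h')$ and the fact that $S^{-1}$ is an anti-homomorphism. The $A$-bimodule structure is the one already recorded in Section 2. It remains to check the compatibility conditions (1) and (2), namely
$$h(af)=(h_1.a)(h_2f)\quad\text{and}\quad h(fa)=(h_1f)(h_2.a).$$
The second is immediate from the module-algebra property. For the first, use $\mathcal C$'s right compatibility $h'.(c\blacktriangleleft a)=(h'_1.c)\blacktriangleleft(h'_2.a)$ with $h'=S^{-1}(h_1)$. After reindexing,
$$(h_1.a)(h_2f)(c)=h_3.f\bigl((S^{-1}(h_2).c)\blacktriangleleft (S^{-1}(h_1)h_4.a)\bigr).$$
This is the step where the bookkeeping is most delicate: we must collapse $S^{-1}(h_1)h_4$ against the remaining Sweedler indices, and I expect careful index matching to be needed there.

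\textbf{Part (2).} We need $h.(f\#g)=(h_1f)\#(h_2g)$ and $h\epsilon_{\mathcal C}=\epsilon_H(h)\epsilon_{\mathcal C}$. The second holds because $\epsilon_{\mathcal C}$ is $H$-linear, so $h_2.\epsilon_{\mathcal C}(S^{-1}(h_1).c)=h_2S^{-1}(h_1).\epsilon_{\mathcal C}(c)=\epsilon_H(h)\epsilon_{\mathcal C}(c)$. For the first, expand
$$(h_1f\#h_2g)(c)=h_4.g\Bigl(S^{-1}(h_3).\bigl[c_1\blacktriangleleft h_2.f(S^{-1}(h_1).c_2)\bigr]\Bigr).$$
Push $S^{-1}(h_3)$ through $\blacktriangleleft$ using the compatibility of $\mathcal C$. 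The resulting $S^{-1}(\cdot)h$ factors then cancel via the antipode identities, and $\Delta_{\mathcal C}(S^{-1}(h).c)=S^{-1}(h)_1.c_1\otimes S^{-1}(h)_2.c_2$ reassembles the expression as $h_2.(f\#g)(S^{-1}(h_1).c)$. The smash product ${}^{\prime}{\mathcal C}\#H$ then exists by the general construction.

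\textbf{Part (3).} $H$-linearity of $i$ is
$$(h\,i(a))(c)=h_2.\bigl(\epsilon_{\mathcal C}(S^{-1}(h_1).c)\,a\bigr)=\bigl(h_2S^{-1}(h_1).\epsilon_{\mathcal C}(c)\bigr)(h_3.a)=\epsilon_{\mathcal C}(c)(h.a).$$
For the last identities, compute
$$(i(a)\#f)(c)=f\bigl(c_1\blacktriangleleft \epsilon_{\mathcal C}(c_2)a\bigr)=f(c\blacktriangleleft a)=(af)(c),$$
by the counit property, and
$$(f\#i(a))(c)=\epsilon_{\mathcal C}\bigl(c_1\blacktriangleleft f(c_2)\bigr)a=f\bigl(\epsilon_{\mathcal C}(c_1)\blacktriangleright c_2\bigr)a=f(c)a=(fa)(c),$$
using left $A$-linearity of $f$.
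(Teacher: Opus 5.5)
Your route is the paper's: direct Sweedler computations driven by $S^{-1}(h_2)h_1=\epsilon_H(h)1_H=h_2S^{-1}(h_1)$. Most of it is right, but the one nontrivial identity in part (1), $h(af)=(h_1.a)(h_2f)$, is not established. Your displayed reduction
$$(h_1.a)(h_2f)(c)=h_3.f\bigl((S^{-1}(h_2).c)\blacktriangleleft (S^{-1}(h_1)h_4.a)\bigr)$$
is misindexed. The factors $S^{-1}(h_1)$ and $h_4$ are not adjacent Sweedler factors, so no antipode identity collapses them, and the step you postpone cannot be carried out from this formula.

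The formula is in fact false for non-cocommutative $H$. Take Sweedler's Hopf algebra ($g^2=1$, $x^2=0$, $xg=-gx$, $\Delta(x)=x\otimes 1+g\otimes x$) acting on $A=k\langle u,v\rangle$, with $\mathrm{char}\,k\neq 2$, by $g.u=-u$, $g.v=v$, $x.u=1$, $x.v=0$. Let $\mathcal C=A$ be the trivial coring, and take $f(c)=cu$, $a=uv$, $c=1$, $h=x$. Your right-hand side equals $uv-vu$, while both $(x(af))(1)$ and $((x_1.a)(x_2f))(1)$ equal $vu-uv$.

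The correct bookkeeping starts from $((h_1.a)(h_2f))(c)=(h_2f)(c\blacktriangleleft(h_1.a))=h_3.f\bigl(S^{-1}(h_2).(c\blacktriangleleft(h_1.a))\bigr)$. Apply the right compatibility of $\mathcal C$ to $S^{-1}(h_2)$, whose coproduct is $S^{-1}$ of the two halves of $h_2$ in reversed order, and renumber. This gives
$$h_4.f\bigl((S^{-1}(h_3).c)\blacktriangleleft(S^{-1}(h_2)h_1.a)\bigr)=h_2.f\bigl((S^{-1}(h_1).c)\blacktriangleleft a\bigr)=(h(af))(c),$$
where the middle step is $S^{-1}(h_2)h_1=\epsilon_H(h_1)1_H$ on adjacent factors. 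This is exactly the paper's computation read backwards: it starts from $h(af)$, rewrites $\epsilon_H(h_1)a$ as $S^{-1}(h_2)h_1.a$, and folds back using the compatibility of $\mathcal C$.

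The rest of your proposal is correct and follows the paper. In (2) your expansion is the paper's. Pushing $S^{-1}(h_3)$ through $\blacktriangleleft$ does produce the adjacent pair $S^{-1}(h_3)h_2$ after renumbering, so the cancellation you assert really occurs. Part (3) is correct.

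You also verify several points the paper leaves implicit:
\begin{itemize}
\item that $hf$ is again left $A$-linear and that the formula defines an $H$-module (the paper calls this ``well known'');
\item the unit condition $h\epsilon_{\mathcal C}=\epsilon_H(h)\epsilon_{\mathcal C}$, which is needed for an $H$-module algebra;
\item the identities $af=i(a)\#f$ and $fa=f\#i(a)$, which the paper states without proof.
\end{itemize}
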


\begin{proof} (1) It is well known that $hf$ is left $A$-linear, that is, $hf \in {^{\prime}{\mathcal C}}$. It is also known that $^{\prime}{\mathcal C}$ is an $H$-module. Let $a,a' \in A$, $c \in {\mathcal C}$ and $f \in {^{\prime}{\mathcal C}}$. We have
$$\begin{array}{rcl}[h(af)](c)&=& h_2[(af)(S_H^{-1}(h_1).c)]\\
&=& h_2[f((S_H^{-1}(h_1).c) \blacktriangleleft a))]\\
&=& h_3[f((S_H^{-1}(h_2).c) \blacktriangleleft (\epsilon_H(h_1)a))]\\
&=& h_4[f((S_H^{-1}(h_3).c) \blacktriangleleft (S_H^{-1}(h_2)h_1.a))]\\
&=& h_3[f(S_H^{-1}(h_2).(c \blacktriangleleft (h_1.a)))]\\
&=&  (h_2f)(c \blacktriangleleft (h_1.a))\\
&=& [(h_1.a)(h_2f)](c),\end{array}$$
so the equation $(1)$ is satisfied. We also have
$$\begin{array}{rcl} [h(fa')](c)&=& h_2[(fa')(S_H^{-1}(h_1).c)]\\
&=& h_2[f((S_H^{-1}(h_1).c)a')]\\
&=& h_2f(S_H^{-1}(h_1).c)(h_3.a')\\
&=& [(h_1f)(c)](h_2.a')\\
&=& [(h_1f)(h_2.a')](c),\end{array}$$
so the equation $(2)$ is satisfied.

(2) Let $h \in H$, $c \in {\mathcal C}$ and $f,g \in {^{\prime}{\mathcal C}}$. We have
$$\begin{array}{rcl}[(h_1f) \#(h_2g)](c) &=&(h_2g)[c_1 \blacktriangleleft((h_1f)(c_2))]\\
&=&h_4(g[S_H^{-1}(h_3).(c_1 \blacktriangleleft(h_2(f(S_H^{-1}(h_1).c_2)))])\\
&=&h_5(g[(S_H^{-1}(h_4).c_1) \blacktriangleleft S_H^{-1}(h_3)(h_2(f(S_H^{-1}(h_1).c_2)))])\\
&=&h_3(g[(S_H^{-1}(h_2).c_1) \blacktriangleleft (f(S_H^{-1}(h_1).c_2))])\\
&=&h_2(g[(S_H^{-1}(h_{12}).c_1) \blacktriangleleft (f(S_H^{-1}(h_{11}).c_2))])\\
&=&h_2(g[(S_H^{-1}(h_{1})_1.c_1) \blacktriangleleft (f(S_H^{-1}(h_{1})_2.c_2))])\\
&=&h_2(g[(S_H^{-1}(h_{1}).c)_1 \blacktriangleleft (f((S_H^{-1}(h_{1}).c)_2))])\\
&=&h_2[(f\#g)(S_H^{-1}(h_{1}).c)]\\
&=&[h(f\#g)](c). \end{array}$$
It follows that $(h_1f) \#(h_2g)=h(f\#g)$.

(3) Let $h \in H$, $c \in {\mathcal C}$ and $a \in A$. We have $$\begin{array}{rcl}h(i(a))(c)&=& h_2[i(a)(S_H^{-1}(h_1).c)]\\
&=& h_2[\epsilon_{\mathcal C}(S_H^{-1}(h_1).c)a]\\
&=& h_2[\epsilon_{\mathcal C}(S_H^{-1}(h_1).c)](h_1.a)\\
&=&[\epsilon_{\mathcal C}(h_2S_H^{-1}(h_1).c)](h_1.a)\\
&=&[\epsilon_{\mathcal C}(\epsilon_H(h_2)c)](h_1.a)\\
&=&\epsilon_{\mathcal C}(c)(h.a)\\
&=& i(h.a)(c).\end{array}$$
So $i$ is a homomorphism of $H$-modules.
\end{proof}

In Lemma 2.9, if we consider the right dual ${\mathcal C}^{\prime}$ of $\mathcal C$ instead of the left dual, then we don't need to assume that the antipode of $H$ is bijective. In this case, the $H$-action on ${\mathcal C}^{\prime}$ is given by 
$$(hf)(c)=h_1f(S_H(h_2).c); \forall h \in H, c \in {\mathcal C}, f \in {\mathcal C}^{\prime}.$$
The $A$-biaction on ${\mathcal C}^{\prime}$ is defined by 
$$(fa)(c)=f(a \blacktriangleright c), (af)(c)=af(c), \forall a\in A, c \in {\mathcal C}, f \in {}^{\prime}{\mathcal C}$$ 

\begin{lemma} Let $H$ be a Hopf algebra with a bijective antipode. Let $\mathcal C$ be an $(H,A)$-coring. Then every right $(H,{\mathcal C})$-comodule $M$ is an object of $_H{\mathcal M}_{^{\prime}{\mathcal C}}$: the $^{\prime}{\mathcal C}$-action is defined by $mf=m_0 f(m_1)$ for all $f \in {^{\prime}}{\mathcal C}$ and $m\in M$ (so $mi(a)=ma$).
\end{lemma}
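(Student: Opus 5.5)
We must show three things: $M$ is a right ${}^{\prime}{\mathcal C}$-module under $mf=m_0f(m_1)$; the identity $mi(a)=ma$ holds; and the $H$-action on $M$ and the $H$-algebra structure on ${}^{\prime}{\mathcal C}$ from Lemma 2.9 satisfy the compatibility (2), namely $h(mf)=(h_1m)(h_2f)$.

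\textbf{Well-definedness and the identity $mi(a)=ma$.} The formula $mf=m_0f(m_1)$ is well defined on $M\otimes_A{\mathcal C}$ because $f$ is left $A$-linear. Indeed, $m_0a\otimes_A m_1$ and $m_0\otimes_A a\blacktriangleright m_1$ both go to $m_0af(m_1)$. For $f=i(a)$ we get $m_0\epsilon_{\mathcal C}(m_1)a=ma$ by the counit property of $\rho_{M,{\mathcal C}}$. In particular $m\epsilon_{\mathcal C}=m$, since the unit of ${}^{\prime}{\mathcal C}$ is $\epsilon_{\mathcal C}=i(1_A)$.

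\textbf{Module associativity.} This is the standard fact that a right ${\mathcal C}$-comodule is a right ${}^{\prime}{\mathcal C}$-module (cf.\ \cite{brWi}, 19.2). We compute
$$(mf)g=m_0f(m_1)g\text{-action}=(m_0)_0\,g\big((m_0)_1\blacktriangleleft f(m_1)\big),$$
using right $A$-linearity of $\rho_{M,{\mathcal C}}$. Coassociativity then rewrites this as $m_0\,g\big(m_{1}\blacktriangleleft f(m_{2})\big)=m_0(f\#g)(m_1)=m(f\#g)$, which matches the multiplication $f\#g$ defined above.

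\textbf{Compatibility with $H$.} This is the only step that genuinely uses the hypotheses of Lemma 2.9. We compute $(h_1m)(h_2f)$. Since $\rho_{M,{\mathcal C}}$ is $H$-linear, $(h_1m)_0\otimes_A(h_1m)_1=h_1m_0\otimes_A h_2m_1$. Hence
$$(h_1m)(h_2f)=(h_1m_0)\,(h_3f)(h_2m_1)=(h_1m_0)\,h_4\big[f(S_H^{-1}(h_3)h_2.m_1)\big].$$
Because $H$ is a Hopf algebra with bijective antipode, $S_H^{-1}(h_3)h_2=\epsilon_H(h_2)1_H$ (apply $S_H^{-1}$ to $S_H(h_1)h_2=\epsilon(h)$ in $H^{cop}$ form). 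The expression therefore collapses to $(h_1m_0)(h_2.f(m_1))$. By the left-right compatibility (2) for $M$, this equals $h(m_0f(m_1))=h(mf)$.

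The main obstacle is keeping the Sweedler indices straight in this last computation. In particular one must check that the identity $S_H^{-1}(h_{2})h_1=\epsilon_H(h)$ is used with the right ordering of the coproduct, which is why the bijectivity of the antipode is needed.

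Together these give that $M$ is a left-right $(H,{}^{\prime}{\mathcal C})$-module, i.e.\ an object of $_H{\mathcal M}_{^{\prime}{\mathcal C}}$.
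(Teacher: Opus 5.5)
Your proposal is correct and follows the paper's proof. The right ${}^{\prime}{\mathcal C}$-module structure is the standard comodule-to-dual-module fact: the paper simply cites it, while you verify it directly. Your $H$-compatibility computation is the paper's chain of equalities read in reverse, using the $H$-linearity of $\rho_{M,{\mathcal C}}$, the $H$-action on ${}^{\prime}{\mathcal C}$ from Lemma 2.9, the identity $S_H^{-1}(h_2)h_1=\epsilon_H(h)1_H$, and condition (2) for $M$.
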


\begin{proof} By \cite[Proposition 38]{caeMilZhu} or \cite[19.1]{brWi}, $M$ is a right $^{\prime}{\mathcal C}$-module. By the assumptions, $M$ is an $H$-module. For $h \in H$, $f \in {^{\prime}{\mathcal C}}$ and $m \in M$, we have
$$\begin{array}{rcl} h(mf)&=& h(m_0 f(m_1))\\
&=&  (h_1m_0)(h_2(f(m_1)))\\
&=&  (h_{1}m_0)[h_3f(\epsilon_H(h_{2})m_1)]\\
&=&  (h_{1}m_0)[h_4f(S_H^{-1}(h_3)h_{2}m_1)]\\
&=&  (h_{1}m_0)[h_4f(S_H^{-1}(h_3).(h_{2}m_1))]\\
&=&  (h_1m)_0[h_3f(S_H^{-1}(h_2).(h_1m)_1)]\\
&=&  (h_1m)_0[(h_2f)((h_1m)_1)]\\
&=& (h_1m)(h_2f). \end{array}$$
\end{proof}

Let $H$ be a Hopf algebra with a bijective antipode, and let $\mathcal C$ be an $(H,A)$-coring which is left $A$-projective. We can show as in \cite[19.2]{brWi} that 
$_H{\mathcal M}^{\mathcal C}$ is a full subcategory of $_H{\mathcal M}_{^{\prime}{\mathcal C}}$, i.e., $$_HHom^{\mathcal C}(M , N)={}_HHom_{^{\prime}{\mathcal C}}(M , N) \quad \hbox{for any} \quad M, N \in {}_H{\mathcal M}^{\mathcal C}.$$ We recall that $$_HHom_{^{\prime}{\mathcal C}}(M , N)={}_{(^{\prime}{\mathcal C})^{op} \#H}Hom(M,N)$$
since $_H{\mathcal M}_{^{\prime}{\mathcal C}}$ and $_{(^{\prime} {\mathcal C})^{op}\# H}{\mathcal M}$ are isomorphic categories \cite[Proposition 8.4.1]{Caen}. As consequences, 

(1) an object of $_H{\mathcal M}^{\mathcal C}$ that is projective in $_H{\mathcal M}_{^{\prime}{\mathcal C}}$ is projective in $_H{\mathcal M}^{\mathcal C}$. 

(2) an object of $_H{\mathcal M}^{\mathcal C}$ that is finitely generated in $_{(^{\prime} {\mathcal C})^{op}\# H}{\mathcal M}$ is finitely generated in $_H{\mathcal M}^{\mathcal C}$: indeed, a direct sum of objects in $_H{\mathcal M}^{\mathcal C}$ is a direct sum of objects in $_{(^{\prime} {\mathcal C})^{op}\# H}{\mathcal M}$ and the functor $_{(^{\prime}{\mathcal C})^{op} \#H}Hom( \Lambda,-)$ commutes with direct sums if $\Lambda$ is finitely generated in $_{(^{\prime} {\mathcal C})^{op}\# H}{\mathcal M}$. We deduce that the functor $_HHom^{\mathcal C}( \Lambda , -)$ commutes with direct sums in $_H{\mathcal M}^{\mathcal C}$. So $\Lambda$ is finitely generated in $_H{\mathcal M}^{\mathcal C}$.    

(3) an object of $_H{\mathcal M}^{\mathcal C}$ that is finitely presented in $_{(^{\prime} {\mathcal C})^{op}\# H}{\mathcal M}$ is finitely presented in $_H{\mathcal M}^{\mathcal C}$: indeed, a direct limit of objects in $_H{\mathcal M}^{\mathcal C}$ is a direct limit of objects in $_{(^{\prime} {\mathcal C})^{op}\# H}{\mathcal M}$ and the functor $_{(^{\prime}{\mathcal C})^{op} \#H}Hom( \Lambda,-)$ commutes with direct limits if $\Lambda$ is finitely presented in $_{(^{\prime} {\mathcal C})^{op}\# H}{\mathcal M}$. We deduce that the functor $_HHom^{\mathcal C}( \Lambda , -)$ commutes with direct limits in $_H{\mathcal M}^{\mathcal C}$. So $\Lambda$ is finitely presented in $_H{\mathcal M}^{\mathcal C}$.   

\begin{lemma} Let $H$ be a Hopf algebra with a bijective antipode. Let $\mathcal C$ be an $(H,A)$-coring. We have a functor $F:{}_H{\mathcal M}^{\mathcal C} \rightarrow {}_H{\mathcal M}_{^{\prime}{\mathcal C}}$. This functor is an isomorphism if $\mathcal C$ is finitely generated and projective as a left $A$-module.    
\end{lemma}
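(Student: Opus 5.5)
The functor $F$ will be built from Lemma 2.10, and for the isomorphism part I will transport the classical equivalence ${\mathcal M}^{\mathcal C}\cong {\mathcal M}_{^{\prime}{\mathcal C}}$ to the $H$-equivariant setting.

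\emph{Definition of $F$.} By Lemma 2.10, every right $(H,{\mathcal C})$-comodule $M$ is an object of $_H{\mathcal M}_{^{\prime}{\mathcal C}}$ via $mf=m_0f(m_1)$. I set $F(M)=M$ with this structure and $F(\varphi)=\varphi$ on morphisms. Let $\varphi\in {}_HHom^{\mathcal C}(M,N)$. It is $H$-linear by hypothesis, and it is $^{\prime}{\mathcal C}$-linear because colinearity gives
$$\varphi(m_0)f(m_1)=\varphi(m)_0f(\varphi(m)_1),$$
after applying $id\otimes_A f$ to $\varphi(m)_0\otimes_A\varphi(m)_1=\varphi(m_0)\otimes_A m_1$. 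Functoriality is then immediate, since $F$ is the identity on underlying maps.

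\emph{Isomorphism when ${\mathcal C}$ is finitely generated projective as a left $A$-module.} Choose a finite dual basis $\{c_i, f_i\}\subset {\mathcal C}\times {^{\prime}{\mathcal C}}$ with $c=\sum_i f_i(c)\blacktriangleright c_i$ for all $c$. For $M\in {}_H{\mathcal M}_{^{\prime}{\mathcal C}}$ I define
$$\rho(m)=\sum_i mf_i\otimes_A c_i.$$
By the classical result (\cite[19.2--19.3]{brWi}, \cite{caeMilZhu}), $\rho$ is a right ${\mathcal C}$-coaction, and the assignments $\rho\mapsto(m\mapsto m_0f(m_1))$ and the formula above are mutually inverse. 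They identify ${\mathcal M}^{\mathcal C}$ with ${\mathcal M}_{^{\prime}{\mathcal C}}$, and colinear maps with $^{\prime}{\mathcal C}$-linear maps. Both functors are the identity on underlying sets and maps, so I expect an isomorphism of categories and not merely an equivalence. It therefore remains only to check that this $\rho$ is $H$-linear, i.e. a morphism of left-right $(H,A)$-modules. Right $A$-linearity is part of the classical statement.

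\emph{Main obstacle: $H$-linearity of $\rho$.} The cleanest route avoids computing with the $f_i$ directly. Use the isomorphism
$$\theta: M\otimes_A{\mathcal C}\to Hom_A({^{\prime}{\mathcal C}},M),\qquad m\otimes_A c\mapsto [f\mapsto m\,i(f(c))]=m f(c),$$
which is bijective because ${\mathcal C}$ is finitely generated projective as a left $A$-module. Under $\theta$, the element $\rho(m)$ corresponds to the map $f\mapsto mf$, since
$$\sum_i m f_i\, f(c_i)=m\Big(\sum_i f_i\# i(f(c_i))\Big)=mf$$
by the dual basis property. Now give $Hom_A({^{\prime}{\mathcal C}},M)$ the $H$-action $(h\psi)(f)=h_1\psi(S_H(h_2)f)$ (or the variant involving $S_H^{-1}$ appropriate to the conventions of Lemma 2.9). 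I will verify, exactly as in the computations of Lemmas 2.9 and 2.10, that $\theta$ is $H$-linear for the diagonal action on $M\otimes_A{\mathcal C}$. Then the $H$-linearity of $\rho$ reduces to the statement that $m\mapsto(f\mapsto mf)$ is $H$-linear, i.e.
$$h_1\big(m\,(S_H(h_2)f)\big)=(hm)f.$$
This follows from the compatibility $h(mf)=(h_1m)(h_2f)$ together with the antipode axioms. The risk is choosing the correct antipode convention given the $S_H^{-1}$ in Lemma 2.9. I would settle it by testing on $f=i(a)$, where both sides must reduce to $(hm)a$.

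Once $H$-linearity is established, $F$ is bijective on objects and fully faithful, hence an isomorphism of categories.
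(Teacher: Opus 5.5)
Your proposal is correct and has the same skeleton as the paper's proof. $F$ is the identity on underlying objects via Lemma 2.10, the inverse uses the dual-basis coaction $\rho(m)=\sum_i mf_i\otimes_A c_i$, the classical coring result gives the comodule axioms and mutual inverseness, and everything reduces to $H$-linearity of $\rho$.

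The difference is in that last step. The paper proves $\rho(hm)=h\rho(m)$ by direct computation:
\begin{itemize}
\item it inserts $\epsilon_H(h_2)=h_2S_H(h_3)$;
\item it re-expands $S_H(h_2)f_k$ in the dual basis as $\sum_l f_l\cdot\big((S_H(h_2)f_k)(c_l)\big)$;
\item it moves the resulting scalars across $\otimes_A$;
\item it finishes with $\sum_k f_k(h.c)\blacktriangleright c_k=h.c$.
\end{itemize}
You instead pass through the injection $\theta:M\otimes_A{\mathcal C}\to Hom_A({}^{\prime}{\mathcal C},M)$. Under $\theta$, $\rho(m)$ becomes $f\mapsto mf$, so the claim collapses to $h_1\big(m(S_H(h_2)f)\big)=(hm)f$. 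That identity is immediate from $h(mf)=(h_1m)(h_2f)$.

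Your choice of $S_H$ rather than $S_H^{-1}$ is the right one. From the action in Lemma 2.9 one gets $(S_H(h)f)(c)=S_H(h_1).f(h_2.c)$. Hence $h_1\big[m\,\big((S_H(h_2)f)(c)\big)\big]=(h_1m)f(h_2.c)=\theta\big(h(m\otimes_A c)\big)(f)$, which is exactly the $H$-linearity of $\theta$ you need. The $S_H^{-1}$ variant would fail your test on $f=i(a)$ in general.

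Your route is more conceptual and avoids the double dual-basis bookkeeping; the paper's is more hands-on but self-contained. You also check explicitly that colinear maps are ${}^{\prime}{\mathcal C}$-linear, which the paper leaves tacit.

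One step you leave implicit should be stated, as the paper does. An object $M\in{}_H{\mathcal M}_{{}^{\prime}{\mathcal C}}$, with the $A$-action through $i$, must be a left-right $(H,A)$-module. You need this both for $G(M)\in{}_H{\mathcal M}^{\mathcal C}$ and for the diagonal action on $M\otimes_A{\mathcal C}$ to be defined. It follows from $h(mf)=(h_1m)(h_2f)$ together with $h\,i(a)=i(h.a)$ (Lemma 2.9(3)).
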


\begin{proof} Let $M$ be an object of $_H{\mathcal M}^{\mathcal C}$. By Lemma 2.10, $M$ is an object of $_H{\mathcal M}_{^{\prime}{\mathcal C}}$ with the same $H$-action and with the right-$^{\prime}{\mathcal C}$-action defined by $mf=m_0 f(m_1)$ for all $f \in {^{\prime}}{\mathcal C}$ and $m\in M$. Setting $F(M)=M$, we get the desired functor. Now let $\mathcal C$ be finitely generated and projective as a left $A$-module, and let $\{c_k, f_k, k=1,2, ..., m\}$ be a finite dual basis for $\mathcal C$, $c_k \in {\mathcal C}$, $f_k \in {}^{\prime}{\mathcal C}$. 
By \cite[Proposition 38]{caeMilZhu}, we have $c=\sum_{k}f_k(c) \blacktriangleright c_k$ and $f=\sum_{k}f_k.[f(c_k)]$. We have
$$h.c=\sum_{k}h.(f_k(c) \blacktriangleright  c_k)=\sum_{k}f_k(h.c) \blacktriangleright c_k, \quad \hbox{and}$$
$$\sum_{k}[h_1.f_k(c) \blacktriangleright (h_2.c_k)]=\sum_{k}f_k(h.c) \blacktriangleright c_k.$$
For $M$ an object of $_H{\mathcal M}_{{^{\prime}{\mathcal C}}}$, let $G(M)=M$, with $\mathcal C$-comodule structure $\rho_{M, \mathcal C}(m)=\sum_k mf_k \otimes_Ac_k$ and the same $H$-action. By \cite[Proposition 38]{caeMilZhu}, $M$ is a right $\mathcal C$-comodule. We have
$$ \begin{array}{rcl} \rho_{M, \mathcal C}(hm)&=& \sum_k(hm)f_k \otimes c_k\\
&=& \sum_k(h_1m)\epsilon_H(h_2)f_k \otimes c_k\\
&=& \sum_k(h_1m)(h_2S_H(h_3)f_k) \otimes c_k\\
&=& \sum_kh_1[m(S_H(h_2)f_k)] \otimes c_k\\
&=& \sum_{k,l}h_1[m\big(f_l.((S_H(h_2)f_k)(c_l))\big)] \otimes c_k\\
&=& \sum_{k,l}(h_1m)\big(h_{2}[f_l.((S_H(h_3)f_k)(c_l))]\big) \otimes c_k\\
&=& \sum_{k,l}(h_1m)[(h_{2}f_l).[h_3((S_H(h_4)f_k(c_l))]] \otimes c_k\\
&=& \sum_{k,l}(h_1m)(h_{2}f_l) \otimes [h_3((S_H(h_4)f_k)(c_l))] \blacktriangleright c_k\\
&=& \sum_{k,l}(h_1m)(h_{2}f_l) \otimes [h_3(S_H(h_4)\{f_k(S_H^{-1}(S_H(h_5)).c_l)\})] \blacktriangleright c_k\\
&=& \sum_{k,l}(h_1m)(h_{2}f_l) \otimes f_k(h_3.c_l) \blacktriangleright c_k\\
&=& \sum_{k,l}(h_1m)(h_{2}f_l) \otimes (h_3.c_l)\\
&=& \sum_l[h_1(mf_l)] \otimes h_2.c_l\\
&=& h(\sum_l mf_l \otimes c_l)\\
&=& h\rho_{M, \mathcal C}(m).\end{array}$$  
So $\rho_{M, \mathcal C}$ is a homomorphism of $H$-modules. Let us equip $M$ with its structure of right $A$-module induced by the algebra homomorphism $i: A \rightarrow {^{\prime}{\mathcal C}}$. By Lemma 2.9(2), $i$ is a homomorphism of $H$-modules. Since the $H$-action on ${^{\prime}{\mathcal C}}$ is compatible with the algebra product of ${^{\prime}{\mathcal C}}$ (Lemma 2.9(3)), we get that
$$h(ma)=(h_1m)(h_2.a).$$ So $M$ is a left-right $(H,A)$-module. It follows that $M$ is an object of $_H{\mathcal M}^{\mathcal C}$. Adapting the proof of \cite[Proposition 38]{caeMilZhu}, we have $(GF)(M)=M$ for every $M$ in $_H{\mathcal M}^{\mathcal C}$ and $(FG)(M)=M$ for every $M$ in $_H{\mathcal M}_{^{\prime}{\mathcal C}}$. 
\end{proof}

\section{The main results}

In this section, the notations and the conventions of the preceding section are still valid, $H$ is a bialgebra, $A$ is an $H$-module algebra  and $\mathcal C$ an $(H, A)$-coring. Given two right $(H,{\mathcal C})$-comodules $\Lambda$ and $N$, the $k$-module ${}_HHom^{\mathcal C}(\Lambda , N)$ is a right module over the endomorphism ring $B={}_HEnd^{\mathcal C}(\Lambda)$ with the standard action    
$$fb=f\circ b; \forall f \in{}_HHom^{\mathcal C}(\Lambda , N), b \in B.$$
We get a functor $G={}_HHom^{\mathcal C}(\Lambda , -): {}_H{\mathcal M}^{\mathcal C} \rightarrow {\mathcal M}_B$. Let us consider $\Lambda$ as a left $B$-module by $b\lambda=b(\lambda)$. So $\Lambda$ is a $(B,A)$-bimodule. For any right
$B$-module $P$, $P \otimes_B \Lambda$ is a right $(H,{\mathcal C})$-comodule: the $H$-module structure is given by $h(p \otimes_B \lambda)=p \otimes_B (h\lambda)$ which is well-defined, while the comodule structure is given by $\rho_{P \otimes_B \Lambda , {\mathcal C}}=id_P \otimes_B \rho_{\Lambda,{\mathcal C}}$. The
functor $G$ has the left adjoint $F=-\otimes_B \Lambda: {\mathcal M}_B \rightarrow {}_H{\mathcal M}^{\mathcal C}$; that is,  for $N \in {}_H{\mathcal M}^{\mathcal C}$ and $P \in {\mathcal M}_B$, there is a canonical isomorphism,
$$_HHom^{\mathcal C}(P \otimes_B \Lambda , N) \rightarrow Hom_B(P,{}_HHom^{\mathcal C}(\Lambda , N)); \quad f \mapsto[p \mapsto f(p \otimes_B -)]$$
with inverse map $g \mapsto [p \otimes_B \lambda \mapsto g(p)(\lambda)]$.

The unit of the adjunction is given by
$$u_N : N \rightarrow {}_HHom^{\mathcal C}(\Lambda , N \otimes_B \Lambda), n
\mapsto [\lambda \mapsto n \otimes_B \lambda]$$ for $N \in {\mathcal M}_B$ while the counit is the evaluation map
$$c_M : {}_HHom^{\mathcal C}(\Lambda , M)\otimes_B \Lambda \rightarrow M, f\otimes_B \lambda \mapsto f(\lambda)$$ for $M \in {}_H{\mathcal M}^{\mathcal C}$.

The adjointness property means that we have
$$G(c_M) \circ u_{G(M)}=id_{G(M)}, \quad c_{F(N)} \circ F(u_N)=id_{F(N)};
M \in {}_H{\mathcal M}^{\mathcal C}, N \in {\mathcal M}_B  \eqno(\star).$$

Let $A$ be an $H$-algebra and $\mathcal C$ an $(H,A)$-coring which is left $A$-flat. An object $\Lambda \in {}_H{\mathcal M}^{\mathcal C}$ is called semi-$\Sigma$-quasi-projective if the functor $_HHom^{\mathcal C}(\Lambda , -) : {}_H{\mathcal M}^{\mathcal C} \rightarrow{\mathcal M}$ sends an exact sequence in $_H{\mathcal M}^{\mathcal C}$ of the form
$$\Lambda^{(J)} \rightarrow \Lambda^{(I)} \rightarrow N \rightarrow 0$$
to an exact sequence \cite{sato}. Obviously, a projective object in $_H{\mathcal M}^{\mathcal C}$ is semi-$\Sigma$-quasi-projective.

\begin{lemma} Assume that $\mathcal C$ is flat as a left $A$-module. Let $\Lambda$ be a right $(H,{\mathcal C})$-comodule that is a finitely generated object in ${}_H{\mathcal M}^{\mathcal C}$, and let $B ={}_HEnd^{\mathcal C}(\Lambda)$. For every index set $I$,

(1) the natural map $\kappa : B^{(I)}={}_HHom^{\mathcal C}(\Lambda , \Lambda)^{(I)} \rightarrow {}_HHom^{\mathcal C}(\Lambda , \Lambda^{(I)})$ is an isomorphism;

(2) $c_{\Lambda^{(I)}}$ is an isomorphism;

(3) $u_{B^{(I)}}$ is an isomorphism;

(4) if $\Lambda$ is semi-$\Sigma$-quasi-projective in $_H{\mathcal M}^{\mathcal C}$, then $u$ is a natural isomorphism; in other words, the induction functor $F =(-) \otimes_B \Lambda$ is fully faithful.

\end{lemma}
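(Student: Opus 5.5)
We follow the standard argument for adjoint pairs $F=-\otimes_B\Lambda \dashv G={}_HHom^{\mathcal C}(\Lambda,-)$, using only that $\Lambda$ is finitely generated in the Grothendieck category ${}_H{\mathcal M}^{\mathcal C}$ (Lemma 2.7), so that $G$ commutes with direct sums.

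For (1), the map $\kappa$ sends $(b_i)_{i}$, with finitely many nonzero entries, to $\lambda\mapsto (b_i(\lambda))_i$. It is injective because composing with the canonical projections $\Lambda^{(I)}\to\Lambda$ recovers each $b_i$. Surjectivity is exactly the statement that $G$ commutes with direct sums, which characterizes finitely generated objects in a Grothendieck category (recalled before Lemma 2.8). Concretely, given $f\in{}_HHom^{\mathcal C}(\Lambda,\Lambda^{(I)})$, the subobjects $f^{-1}(\Lambda^{(J)})$ for finite $J\subseteq I$ form a directed family with union $\Lambda$. Hence $\Lambda=f^{-1}(\Lambda^{(J)})$ for some finite $J$, so $f$ factors through $\Lambda^{(J)}=\Lambda^J$ and lies in the image of $\kappa$. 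Kernels and preimages are computed in ${}_H{\mathcal M}^{\mathcal C}$, and are $H$-stable because all maps involved are $H$-linear, so this argument takes place in the right category.

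For (2), we identify $G(\Lambda^{(I)})\otimes_B\Lambda\cong B^{(I)}\otimes_B\Lambda\cong\Lambda^{(I)}$ via $\kappa$ from (1). We then check that under this identification $c_{\Lambda^{(I)}}$ becomes the canonical isomorphism $(b_i)\otimes\lambda\mapsto(b_i(\lambda))$. For (3), we use the triangle identity $G(c_M)\circ u_{G(M)}=\mathrm{id}$ from $(\star)$ with $M=\Lambda^{(I)}$. Since $G(\Lambda^{(I)})\cong B^{(I)}$ by (1) and $c_{\Lambda^{(I)}}$ is an isomorphism by (2), $u_{B^{(I)}}$ is a one-sided inverse of an isomorphism, hence itself an isomorphism, once we verify naturality compatibility with $\kappa$. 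Alternatively, we check directly that $u_{B^{(I)}}$ composed with the isomorphism $B^{(I)}\otimes_B\Lambda\cong\Lambda^{(I)}$ is $\kappa$.

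For (4), let $N\in{\mathcal M}_B$ and choose a free presentation $B^{(J)}\to B^{(I)}\to N\to 0$. Since $F$ is right exact, $\Lambda^{(J)}\to\Lambda^{(I)}\to N\otimes_B\Lambda\to 0$ is exact in ${}_H{\mathcal M}^{\mathcal C}$. Semi-$\Sigma$-quasi-projectivity makes $G$ preserve this exactness, giving a commutative diagram with exact rows
$$B^{(J)}\to B^{(I)}\to N\to 0,\qquad G(\Lambda^{(J)})\to G(\Lambda^{(I)})\to G(N\otimes_B\Lambda)\to 0,$$
connected vertically by $u_{B^{(J)}}$, $u_{B^{(I)}}$ and $u_N$, with commutativity coming from naturality of $u$. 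By (3) the first two vertical maps are isomorphisms, so the five lemma (or the cokernel argument) shows $u_N$ is an isomorphism. Since the unit is a natural isomorphism, $F$ is fully faithful. The main obstacle is the surjectivity in (1), namely making the finite-generation argument rigorous inside ${}_H{\mathcal M}^{\mathcal C}$. This is where flatness of ${\mathcal C}$ over $A$ is used: it guarantees that preimages and kernels of colinear maps are subcomodules, so that Lemma 2.7 applies. The remaining steps are routine diagram bookkeeping.
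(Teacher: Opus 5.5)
Your proposal is correct and follows the paper's proof essentially step for step. Part (1) rests on ${}_HHom^{\mathcal C}(\Lambda,-)$ commuting with direct sums. Part (2) recognizes $c_{\Lambda^{(I)}}\circ(\kappa\otimes id_{\Lambda})$ as the canonical isomorphism $B^{(I)}\otimes_B\Lambda\cong\Lambda^{(I)}$. Part (3) applies the triangle identity $(\star)$ with $M=\Lambda^{(I)}$, and part (4) uses a free presentation and the five lemma.

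Two of your steps only make explicit what the paper leaves implicit. Your directed-family argument for the surjectivity of $\kappa$ fills in the case the paper calls ``easy''. Your naturality check, which transports $u_{G(\Lambda^{(I)})}$ to $u_{B^{(I)}}$ along $\kappa$, replaces the paper's silent identification of $G(\Lambda^{(I)})$ with $B^{(I)}$.
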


\begin{proof} Since $\Lambda$ is a finitely generated object in ${}_H{\mathcal M}^{\mathcal C}$, the functor $_HHom^{\mathcal C}(\Lambda,-)$ commutes with direct sums.

(1) It is easy.

(2) It is straightforward to check that the canonical isomorphism $B^{(I)}\otimes_ B \Lambda \simeq \Lambda^{(I)}$ is nothing else
than $c_{\Lambda^{(I)}} \circ (\kappa \otimes id_{\Lambda})$. It follows from (1) that $\kappa \otimes id_{\Lambda}$ is an
isomorphism. So $c_{\Lambda^{(I)}}$ is an isomorphism.

(3) Putting $M=\Lambda^{(I)}$ in $(\star)$ and using (1), we find
$$_HHom^{\mathcal C}(\Lambda , c_{\Lambda^{(I)}}) \circ
u_{Hom^{\mathcal C}(\Lambda, \Lambda^{(I)})} =id_{_HHom^{\mathcal C}(\Lambda, \Lambda^{(I)})}, i.e.,$$
$$_HHom^{\mathcal C}(\Lambda , c_{\Lambda^{(I)}}) \circ
u_{B^{(I)}}=id_{B^{(I)}}.$$ From (2), $_HHom^{\mathcal C}(\Lambda,
c_{\Lambda^{(I)}})$ is an isomorphism, hence $u_{B^{(I)}}$ is an isomorphism.

(4) Take a free resolution $B^{(J)} \rightarrow B^{(I)} \rightarrow N \rightarrow 0$ of a right $B$-module $N$. Since $u$
is natural, we have a commutative diagram
$$\diagram{\bf B^{(J)} &\hfl{}{}{} &{\bf B^{(I)}} &\hfl{}{}{}&
\quad \quad {\bf N} \hfl{}{}{} & 0 \cr {} u_{B^{(J)}} \kfl{}{}
&&\kfl {u_{B^{(I)}}}{} &&\kfl {u_N}{} && \cr {\bf GF(B^{(J)})}
&\hfl{}{}& {\bf GF(B^{(I)})}& \hfl{}{}& {\bf GF(N)} &\hfl{} {}{}& 0 \cr}$$
The top row is exact. The bottom row is exact, since
$$GF(B^{(I)})={}_HHom^{\mathcal C}(\Lambda, B^{(I)} \otimes_B \Lambda)={}_HHom^{\mathcal C}(\Lambda, \Lambda^{(I)})$$ and $\Lambda$ is semi-$\Sigma$-quasi-projective. By (3), $u_{B^{(I)}}$ and $u_{B^{(J)}}$ are isomorphisms; and it follows from the five lemma that $u_N$ is an isomorphism.
\end{proof}

We can now give equivalent conditions for the projectivity and flatness of $P \in {\mathcal M}_B$.

\begin{theorem} Assume that $\mathcal C$ is flat as a left $A$-module. Let $\Lambda$ be a right $(H,{\mathcal C})$-comodule that is a finitely generated object in ${}_H{\mathcal M}^{\mathcal C}$, and let $B ={}_HEnd^{\mathcal C}(\Lambda)$. For $P \in {\mathcal M}_B$, we consider the following statements.

(1) $P \otimes_B \Lambda$ is projective in $_H{\mathcal M}^{\mathcal C}$ and $u_P$ is injective;

(2) $P$ is projective as a right $B$-module;

(3) $P \otimes_B \Lambda$ is a direct summand in $_H{\mathcal M}^{\mathcal C}$ of some $\Lambda^{(I)}$, and $u_P$ is bijective;

(4) there exists $Q \in {}_H{\mathcal M}^{\mathcal C}$ such that $Q$ is a
direct summand of some $\Lambda^{(I)}$, and $P \cong {}_HHom^{\mathcal C}(\Lambda , Q)$ in ${\mathcal M}_B$;

(5) $P \otimes_B \Lambda$ is a direct summand in $_H{\mathcal M}^{\mathcal C}$ of some $\Lambda^{(I)}$.

Then (1) $\Rightarrow$ (2) $\Leftrightarrow$ (3) $\Leftrightarrow$ (4) $\Rightarrow$ (5).

If $\Lambda$ is semi-$\Sigma$-quasi-projective in $_H{\mathcal M}^{\mathcal C}$, then (5) $\Rightarrow$ (3); if $\Lambda$ is projective in
$_H{\mathcal M}^{\mathcal C}$, then (3) $\Rightarrow$ (1).

\end{theorem}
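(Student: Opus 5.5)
The plan is to follow the standard argument for endomorphism rings of finitely generated objects in Grothendieck categories, as in \cite{CG} and \cite{GRio}. We use the adjunction $F=-\otimes_B\Lambda \dashv G={}_HHom^{\mathcal C}(\Lambda,-)$, the triangle identities $(\star)$, and Lemma 3.1. The key fact is that $F$ and $G$ both preserve direct sums (for $G$ this holds because $\Lambda$ is finitely generated). Both functors also preserve direct summands, since they are additive.

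\emph{(2) $\Rightarrow$ (3).} Write $P\oplus P'\cong B^{(I)}$. Applying $F$ gives $(P\otimes_B\Lambda)\oplus(P'\otimes_B\Lambda)\cong B^{(I)}\otimes_B\Lambda\cong\Lambda^{(I)}$ in ${}_H{\mathcal M}^{\mathcal C}$, which yields the first half of (3). By naturality of $u$, the map $u_{B^{(I)}}=u_P\oplus u_{P'}$, and $u_{B^{(I)}}$ is bijective by Lemma 3.1(3). Hence $u_P$ is bijective.

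\emph{(3) $\Rightarrow$ (4).} Take $Q=P\otimes_B\Lambda$. Then $u_P:P\to{}_HHom^{\mathcal C}(\Lambda,Q)$ is the required isomorphism of right $B$-modules.

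\emph{(4) $\Rightarrow$ (2).} If $Q\oplus Q'=\Lambda^{(I)}$, then $G(Q)\oplus G(Q')\cong G(\Lambda^{(I)})\cong B^{(I)}$ by Lemma 3.1(1). So $P\cong G(Q)$ is projective.

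\emph{(3) $\Rightarrow$ (5).} This is trivial.

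\emph{(1) $\Rightarrow$ (2).} Choose a free presentation $\pi:B^{(I)}\to P\to 0$. Then $F(\pi):\Lambda^{(I)}\to P\otimes_B\Lambda$ is an epimorphism in ${}_H{\mathcal M}^{\mathcal C}$, because $F$ is right exact as a left adjoint. Since $P\otimes_B\Lambda$ is projective, $F(\pi)$ splits by some $s$. Apply $G$ and use $u$: the square $G F(\pi)\circ u_{B^{(I)}}=u_P\circ\pi$ commutes. Since $u_{B^{(I)}}$ is an isomorphism, $GF(\pi)\circ G(s)=id$, and $u_P$ is injective, one checks that $u_P$ is also surjective. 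Its image contains $GF(\pi)(GF(B^{(I)}))$, which is all of $GF(P)$ because $GF(\pi)$ is split epi. So $u_P$ is bijective. Then $P\cong GF(P)$ is a direct summand of $GF(B^{(I)})\cong B^{(I)}$, hence projective.

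\emph{(5) $\Rightarrow$ (3) under semi-$\Sigma$-quasi-projectivity.} By Lemma 3.1(4), $u_P$ is automatically bijective, so (3) follows from (5).

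\emph{(3) $\Rightarrow$ (1) when $\Lambda$ is projective.} A direct sum $\Lambda^{(I)}$ of projectives is projective in the Grothendieck category ${}_H{\mathcal M}^{\mathcal C}$ (Lemma 2.7), and direct summands of projectives are projective.

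The main obstacle is (1) $\Rightarrow$ (2). There one must carefully transport the splitting through the naturality square of $u$ and use injectivity of $u_P$ to conclude bijectivity. Everything else reduces to Lemma 3.1 and additivity.
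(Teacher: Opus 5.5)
Your proposal is correct and follows essentially the same route as the paper: each implication comes from Lemma 3.1, the triangle identities $(\star)$ and naturality of $u$, and you use the same splitting-plus-diagram-chase argument for (1) $\Rightarrow$ (2). The only cosmetic difference is that you obtain (4) $\Rightarrow$ (5) through (3), where (5) is already part of the statement. The paper instead argues directly from the splitting produced in (4) $\Rightarrow$ (2).
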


\begin{proof} (2) $\Rightarrow$ (3). If P is projective as a right $B$-module, then we can find an index set $I$ and $P' \in {\mathcal  
M}_B$ such that $B^{(I)}\cong P \oplus P'$. Then obviously
$$\Lambda^{(I)} \cong B^{(I)} \otimes_B \Lambda \cong (P \otimes_B \Lambda)\oplus (P' \otimes_B \Lambda).$$ 
Since $u$ is a natural transformation, we have a commutative diagram
$$\diagram{\bf B^{(I)} &\hfl {\scriptstyle \cong} {}{}&{\bf P \oplus P'}\cr {u_{B^{(I)}}}\kfl{}{}&& \kfl{\scriptstyle }{u_P
\oplus u_{P'}}&&\cr{\bf _HHom^{\mathcal C}(\Lambda , \Lambda^{(I)})} &\hfl{\scriptstyle} {\cong}&{\bf _HHom^{\mathcal C}(\Lambda ,P \otimes_B \Lambda) \oplus {}_HHom^{\mathcal C}(\Lambda ,P' \otimes_B \Lambda)}\cr}.$$
From the fact that $u_{B^{(I)}}$ is an isomorphism, it follows that $u_P$ (and $u_{P'}$) are isomorphisms.

(3) $\Rightarrow$ (4). Take $Q=P \otimes_B \Lambda$.

(4) $\Rightarrow$ (2). Let $f : \Lambda^{(I)} \rightarrow Q$ be a split epimorphism in $_H{\mathcal M}^{\mathcal C}$. Then

$$_HHom^{\mathcal C}(\Lambda, f) : {}_HHom^{\mathcal C}(\Lambda, \Lambda^{(I)})\cong B^{(I)} \rightarrow  {}_HHom^{\mathcal C}(\Lambda, Q) \cong P$$ is also split surjective, hence $P$ is projective as a right $B$-module.

(4) $\Rightarrow$ (5). If (4) is true, we know from the proof of (4) $\Rightarrow$ (2) that $P$ is a direct summand of some
$B^{(I)}$. So $P \otimes_B \Lambda$ is direct summand of $\Lambda ^{(I)}$.

Under the assumption that $\Lambda$ is semi-$\Sigma$-quasi-projective in $_H{\mathcal M}^{\mathcal C}$, (5) $\Rightarrow$ (3) follows from Lemma 3.1(4).

(1) $\Rightarrow$ (2). Take an epimorphism $f : B^{(I)} \rightarrow P$ in ${\mathcal M}_B$. Then
$$F(f)=f \otimes_B id_{\Lambda} : B^{(I)} \otimes_B \Lambda \cong \Lambda^{(I)} \rightarrow P \otimes_B \Lambda$$ 
is also surjective, and split in $_H{\mathcal M}^{\mathcal C}$ since $P \otimes_B \Lambda$ is projective. Consider the commutative diagram
$$\diagram{\bf B^{(I)} &\hfl {\scriptstyle f} {}{}&{\bf P}&\hfl {\scriptstyle} {}{}& 0\cr {} u_{B^{(I)}}\kfl{}{}&&
\kfl{\scriptstyle u_P}{}&&\cr{\bf _HHom^{\mathcal C}(\Lambda , \Lambda^{(I)})} &\hfl{ \scriptstyle} {GF(f)}&{\bf _HHom^{\mathcal C}(\Lambda , P \otimes_B \Lambda)}&\hfl {\scriptstyle} {}{}& 0\cr}$$ The bottom row is split exact, since any functor, in particular $_HHom^{\mathcal C}(\Lambda, -)$ preserves split exact sequences. By Lemma 3.1(3), $u_{B^{(I)}}$ is an isomorphism. A diagram chasing tells us that $u_P$ is surjective. By assumption, $u_P$ is injective, so $u_P$ is bijective. We deduce that the top
row is isomorphic to the bottom row, and therefore splits. Thus $P \in {\mathcal M}_B$ is projective.

(3) $\Rightarrow$ (1). By (3), $P \otimes_B \Lambda$ is a direct summand of some $\Lambda^{(I)}$. If $\Lambda$ is projective in
$_H{\mathcal M}^{\mathcal C}$, then $\Lambda^{(I)}$ is projective in $_H{\mathcal M}^{\mathcal C}$. So $P \otimes_B \Lambda$ being a direct summand of a projective object of $_H{\mathcal M}^{\mathcal C}$ is projective in $_H{\mathcal M}^{\mathcal C}$.
\end{proof}

\begin{theorem} Assume that $\mathcal C$ is flat as a left $A$-module. Let $\Lambda$ be a right $(H,{\mathcal C})$-comodule that is a finitely presented object in ${}_H{\mathcal M}^{\mathcal C}$, and let $B = {}_HEnd^{\mathcal C}(\Lambda)$. For $P \in {\mathcal M}_B$, the following assertions are equivalent.

(1) $P$ is flat as a right $B$-module;

(2) $P \otimes_B \Lambda=\limind Q_i$, where $Q_i \cong \Lambda^{n_i}$ in $_H{\mathcal M}^{\mathcal C}$ for some positive integer $n_i$, and $u_P$ is bijective;

(3) $P \otimes_B \Lambda=\limind Q_i$, where $Q_i \in {}_H{\mathcal M}^{\mathcal C}$ is a direct summand of some $\Lambda^{(I_i)}$, and $u_P$ is bijective;

(4) there exists $Q=\limind Q_i \in {}_H{\mathcal M}^{\mathcal C}$, such that $Q_i \cong \Lambda^{n_i}$ for some positive integer $n_i$ and $_HHom^{\mathcal C}(\Lambda, Q) \cong P$ in ${\mathcal M}_B$;
    
(5) there exists $Q=\limind Q_i \in {}_H{\mathcal M}^{\mathcal C}$, such that $Q_i$ is a direct summand of some $\Lambda^{(I_i)}$ in $_H{\mathcal M}^{\mathcal C}$, and $_HHom^{\mathcal C}(\Lambda, Q) \cong P$ in ${\mathcal M}_B$.

If $\Lambda$ is semi-$\Sigma$-quasi-projective in $_H{\mathcal M}^{\mathcal C}$, these conditions are also equivalent without the assumption that $u_P$ is bijective in (2) and (3).
 
\end{theorem}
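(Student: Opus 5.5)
The plan is to mimic the proof of Theorem 3.2, with Lazard's theorem (flat modules are direct limits of finitely generated free modules) replacing the free-module splitting. Throughout I use the following consequences of finite presentation of $\Lambda$. The functor $G={}_HHom^{\mathcal C}(\Lambda,-)$ commutes with direct limits in ${}_H{\mathcal M}^{\mathcal C}$, which is a Grothendieck category by Lemma 2.7. The functor $F=-\otimes_B\Lambda$ commutes with direct limits, since it is a left adjoint. Lemma 3.1(3) gives that $u_{B^{n}}$ and $u_{B^{(I)}}$ are isomorphisms. Since $u$ is natural and both $G$ and $F$ preserve direct limits, $u_{\limind P_i}=\limind u_{P_i}$. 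Hence $u_P$ is an isomorphism whenever $P$ is a direct limit of modules $P_i$ with each $u_{P_i}$ bijective.

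The implications are proved in the following order.

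(1) $\Rightarrow$ (2). By Lazard, write $P=\limind B^{n_i}$. Then $P\otimes_B\Lambda=\limind (B^{n_i}\otimes_B\Lambda)\cong\limind \Lambda^{n_i}$. The map $u_P$ is bijective by the observation above.

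(2) $\Rightarrow$ (3). This is trivial.

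(3) $\Rightarrow$ (5). Take $Q=P\otimes_B\Lambda$. Then $G(Q)\cong P$ via $u_P$.

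(2) $\Rightarrow$ (4). The same choice of $Q$ works.

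(4) $\Rightarrow$ (5). This is trivial.

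(5) $\Rightarrow$ (1). Write $P\cong G(Q)=\limind G(Q_i)$. Each $G(Q_i)$ is a direct summand of $G(\Lambda^{(I_i)})\cong B^{(I_i)}$ by Lemma 3.1(1), so it is projective. A direct limit of projective modules is flat, so $P$ is flat.

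This closes the cycle (1)$\Rightarrow$(2)$\Rightarrow$(3)$\Rightarrow$(5)$\Rightarrow$(1), and (4) sits between (2) and (5).

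The main point to check is the preservation of direct limits. A direct limit in ${}_H{\mathcal M}^{\mathcal C}$ is computed as in ${\mathcal M}^{\mathcal C}$, hence on underlying modules. This uses flatness of $\mathcal C$, so that $-\otimes_A\mathcal C$ and the colimit commute (Lemma 2.7(4)). Finite presentation then means exactly that $G$ commutes with these colimits. In (5) $\Rightarrow$ (1) the index sets $I_i$ may be infinite; only the finitely generated case of Lemma 3.1(1) is needed to identify $G(\Lambda^{(I_i)})$ with $B^{(I_i)}$.

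For the final statement, assume $\Lambda$ is semi-$\Sigma$-quasi-projective. By Lemma 3.1(4), $u_N$ is an isomorphism for every $N\in{\mathcal M}_B$, so the hypothesis ``$u_P$ is bijective'' in (2) and (3) is automatic. The weakened versions of (2) and (3) are therefore equivalent to the original ones, and hence to (1), (4) and (5).
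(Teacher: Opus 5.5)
Your proposal is correct and follows the paper's proof. It uses the same cycle (1)$\Rightarrow$(2)$\Rightarrow$(3)$\Rightarrow$(5)$\Rightarrow$(1), with (4) between (2) and (5); Lazard's theorem together with Lemma 3.1(3) and the commutation of ${}_HHom^{\mathcal C}(\Lambda,-)$ with direct limits for (1)$\Rightarrow$(2); $Q=P\otimes_B\Lambda$ for (2)$\Rightarrow$(4) and (3)$\Rightarrow$(5); and Lemma 3.1(4) for the final statement. Your (5)$\Rightarrow$(1) is leaner than the paper's, which first shows that $c_{Q_i}$, $c_Q$ and $u_P$ are isomorphisms before reaching the same conclusion that $P$ is a direct limit of the projective modules ${}_HHom^{\mathcal C}(\Lambda,Q_i)$; that detour is unnecessary, since $P\cong{}_HHom^{\mathcal C}(\Lambda,Q)$ is already part of hypothesis (5).
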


\begin{proof} Since $\Lambda$ is a finitely presented object in ${}_H{\mathcal M}^{\mathcal C}$, the functor $_HHom^{\mathcal C}(\Lambda,-)$ commutes with direct limits.

(1) $\Rightarrow$ (2) $P = \limind N_i$, with $N_i = B^{n_i}$. Take $Q_i =\Lambda^{n_i}$, then $$\limind Q_i
\cong \limind (N_i \otimes_B \Lambda) \cong (\limind N_i) \otimes_B \Lambda \cong P \otimes_B \Lambda.$$ Consider the
following commutative diagram:
$$\diagram{\bf P=\limind N_i &\hfl
{\scriptstyle \lim(u_{N_i})} {}{}&{\bf \limind {}_HHom^{\mathcal C}(\Lambda , N_i \otimes_B\Lambda)}\cr {u_P}\kfl{}{}&&
\kfl{\scriptstyle }{f}&&\cr{\bf {}_HHom^{\mathcal C}(\Lambda , (\limind
N_i) \otimes_B \Lambda)} &\hfl{\scriptstyle} {\cong}&{\bf
_HHom^{\mathcal C}(\Lambda , \limind (N_i \otimes_B \Lambda))}.\cr}$$ 
By Lemma 3.1(3), the $u_{N_i}$ are isomorphisms. The natural homomorphism $f$ is an isomorphism because $\Lambda$ is a finitely
presented object in $_H{\mathcal M}^{\mathcal C}$. Hence $u_P$ is an isomorphism.

(2) $\Rightarrow$ (3) and (4) $\Rightarrow$ (5) are obvious.

(2) $\Rightarrow$ (4) and (3) $\Rightarrow$ (5) Put $Q=P \otimes_B \Lambda$. Then $u_P : P \rightarrow {}_HHom^{\mathcal C}(\Lambda, P \otimes_B \Lambda)$ is the required isomorphism.

(5) $\Rightarrow$ (1). We have a split exact sequence $0 \rightarrow  N_i \rightarrow P_i=\Lambda^{(I_i)} \rightarrow Q_i
\rightarrow 0$. Consider the following commutative diagram
$$\diagram{{\bf0}&\hfl{ }{}&{\bf FG(N_i)}\hfl{ }{}&{\bf FG(P_i)}\qquad \hfl{ }{}&&{\bf FG(Q_i)} &\hfl{} {} {\bf0} &\cr
&&\kfl{c_{N_i}}{}\quad\qquad \quad\qquad\quad&\kfl{c_{P_i}}{}
\qquad &&\kfl{c_{Q_i}}{}\cr {\bf0} &\hfl{ }{}&{\bf N_i}\qquad
\hfl{ }{}& {\bf P_i}\qquad \hfl{ }{}&& {\bf Q_i} &\hfl{} {}
{\bf0}.\cr }$$ 
We know from Lemma 3.1(2) that $c_{P_i}$ is an isomorphism. Both rows in the diagram are split exact, so it follows that $c_{N_i}$ and $c_{Q_i}$ are also isomorphisms. Next, consider the commutative diagram
$$\diagram{{\bf (\limind {}_HHom^{\mathcal C}(\Lambda , Q_i)) \otimes_B \Lambda} &
\hfl {\scriptstyle f \otimes id_{\Lambda}}{}{}&{\bf {}_HHom^{\mathcal C}(\Lambda , Q) \otimes_B \Lambda} \cr h\vfl{}{}&&
\kfl{\scriptstyle c_Q}{}&&\cr {\bf \limind(_HHom^{\mathcal C}(\Lambda ,Q_i) \otimes_B \Lambda)}&\hfl{\scriptstyle} {\lim c_{Q_i}}&{\bf Q},\cr}$$ where $h$ and $f$ are the natural homomorphisms. $h$ is an isomorphism because the functor $(-) \otimes_B \Lambda$ preserves inductive limits; $f$ is an isomorphism because $\Lambda$ is a finitely presented object in $_H{\mathcal M}^{\mathcal C}$, and $lim c_{Q_i}$ is an isomorphism because every $c_{Q_i}$ is an isomorphism. It follows that $c_Q$ is an isomorphism. Hence $_HHom^{\mathcal C}(\Lambda , c_Q)$ is an isomorphism. From $(\star)$, we get
$$_HHom^{\mathcal C}(\Lambda , c_Q) \circ u_{_HHom^{\mathcal C}(\Lambda ,Q)}=
id_{_HHom^{\mathcal C}(\Lambda , Q)}.$$ It follows that $u_{_HHom^{\mathcal C}(\Lambda ,Q)}$ is also an isomorphism. Since $_HHom^{\mathcal C}(\Lambda ,Q) \cong P$, $u_P$ is an isomorphism. Consider the isomorphisms
$$P \cong {}_HHom^{\mathcal C}(\Lambda , P \otimes_B \Lambda) \cong {}_HHom^{\mathcal C}(\Lambda , {}_HHom^{\mathcal C}(\Lambda , Q) \otimes_B \Lambda) \cong $$
$$_HHom^{\mathcal C}(\Lambda , Q) \cong
\limind {}_HHom^{\mathcal C}(\Lambda , Q_i);$$ where the first isomorphism is $u_P$, the third is $_HHom^{\mathcal C}(\Lambda , c_Q)$ and the last one is $f$. It follows from Lemma 3.1(1) that $_HHom^{\mathcal C}(\Lambda , P_i) \cong B^{(I_i)}$ is projective as a right $B$-module, hence $_HHom^{\mathcal C}(\Lambda , Q_i)$ is also projective as a right $B$-module, since $Q_i$ is a direct summand of $P_i$. We conclude that $P \in {\mathcal M}_B$ is flat.

The final statement is an immediate consequence of Lemma 3.1(4).
\end{proof}

We get from the main theorems the following results.	
	
\begin{remark} Assume that $H$ is a Hopf algebra with a bijective antipode, and let $\mathcal C$ be an $(H,A)$-coring which is projective and finitely generated as a left $A$-module. By Lemma 2.11, 
$_H{\mathcal M}^{\mathcal C}={}_H{\mathcal M}_{^{\prime}{\mathcal C}}={}_{(^{\prime}{\mathcal C})^{op} \#H}{\mathcal M}$. For every left $(^{\prime}{\mathcal C})^{op} \#H$-module $\Lambda$, we have 
$_HEnd^{\mathcal C}(\Lambda)={}_{(^{\prime}{\mathcal C})^{op} \# H}End(\Lambda)$, the endomorphism ring of the left $(^{\prime}{\mathcal C})^{op} \#H$-module $\Lambda$. Theorems 3.2 and 3.3 give necessary and sufficient conditions for projectivity and flatness of a module over $_{(^{\prime}{\mathcal C})^{op} \# H}End(\Lambda)$, when $\Lambda$ is a finitely generated (finitely presented) left $(^{\prime}{\mathcal C})^{op} \#H$-module. Taking $H=k$ with a trivial Hopf algebra structure, we recover \cite[Theorems 2.2 and 2.3]{CG}.
\end{remark}

\section{The $(H,A)$-coring contains a fixed $H$-grouplike element}
We keep the notations and the conventions of the preceding sections. $H$ is a bialgebra, $A$ is an $H$-module algebra and $\mathcal C$ is an $(H,A)$-coring.

\begin{definition} An $H$-grouplike element of ${\mathcal C}$ is an element $x\in \mathcal C$ such that $\Delta_{\mathcal C}(x)=x\otimes_Ax$, $\epsilon_{\mathcal C}(x)=1_A$ and $h.x=\epsilon_H(h)x$ for all $h \in H$, i.e., an $H$-grouplike element of ${\mathcal C}$ is a grouplike element $x$ of $\mathcal C$ which is $H$-invariant.
\end{definition}

For the remainder of the section, ${\mathcal C}$ contains a fixed $H$-grouplike element $x$. 

\begin{lemma}
(1) $A$ is an object of $_H{\mathcal M}^{\mathcal C}$: the right $A$-module and the right $H$-module structures are defined naturally and the $\mathcal C$-coaction is defined by 
$$\rho_{A, \mathcal C}(a)=1_A \otimes_A (x \blacktriangleleft a)=x \blacktriangleleft a; \quad \forall a \in A.$$

(2) $\rho_{A, \mathcal C}(1_A)$ is an $H$-grouplike element of ${\mathcal C}$.

\end{lemma}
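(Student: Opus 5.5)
We verify the axioms of Definition 2.6 for $A$ with $\rho_{A,\mathcal C}(a)=x\blacktriangleleft a$, using the identification $A\otimes_A{\mathcal C}\cong{\mathcal C}$, $a\otimes_A c\mapsto a\blacktriangleright c$. We use the right $A$-module structure of $A$ given by multiplication and the $H$-action $h.a$ coming from the $H$-algebra structure. First, $A$ is a left-right $(H,A)$-module: the compatibility condition (2), $h.(aa')=(h_1.a)(h_2.a')$, is exactly the $H$-module algebra axiom.

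Next we check that $\rho=\rho_{A,\mathcal C}$ is a right $\mathcal C$-coaction. Right $A$-linearity is immediate: $\rho(aa')=x\blacktriangleleft(aa')=(x\blacktriangleleft a)\blacktriangleleft a'$. For the counit axiom, $\epsilon_{\mathcal C}$ is right $A$-linear, so $\epsilon_{\mathcal C}(x\blacktriangleleft a)=\epsilon_{\mathcal C}(x)a=a$. For coassociativity, $\Delta_{\mathcal C}$ is right $A$-linear, so $\Delta_{\mathcal C}(x\blacktriangleleft a)=x\otimes_A(x\blacktriangleleft a)$. On the other side, $(\rho\otimes_A id_{\mathcal C})(1_A\otimes_A(x\blacktriangleleft a))=x\otimes_A(x\blacktriangleleft a)$ under the identifications $A\otimes_A{\mathcal C}\otimes_A{\mathcal C}\cong{\mathcal C}\otimes_A{\mathcal C}$. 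Here one must track the identifications carefully, writing $\rho(a)=1_A\otimes_A(x\blacktriangleleft a)$ so that both sides land in $A\otimes_A{\mathcal C}\otimes_A{\mathcal C}$.

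The key step, and the only place $H$-invariance of $x$ enters, is $H$-linearity of $\rho$. Here $A\otimes_A{\mathcal C}$ carries the diagonal action of Lemma 2.1, which under the identification becomes
\[
h(a\otimes_A c)=(h_1.a)\otimes_A(h_2.c)\mapsto (h_1.a)\blacktriangleright(h_2.c)=h.(a\blacktriangleright c),
\]
by condition (1) for the left $(H,A)$-bimodule $\mathcal C$. So the identification is $H$-linear and it suffices to show $h.(x\blacktriangleleft a)=x\blacktriangleleft(h.a)$. By condition (2) for $\mathcal C$,
\[
h.(x\blacktriangleleft a)=(h_1.x)\blacktriangleleft(h_2.a)=\epsilon_H(h_1)x\blacktriangleleft(h_2.a)=x\blacktriangleleft(h.a)=\rho(h.a).
\]

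For (2), $\rho_{A,\mathcal C}(1_A)=x\blacktriangleleft 1_A=x$, which is an $H$-grouplike element by hypothesis. The main obstacle is only bookkeeping of the identification $A\otimes_A{\mathcal C}\cong{\mathcal C}$ and its compatibility with the diagonal $H$-action; everything else is routine.
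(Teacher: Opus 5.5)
Your proof is correct and complete. The comodule axioms come from right $A$-linearity of $\Delta_{\mathcal C}$ and $\epsilon_{\mathcal C}$ together with $\Delta_{\mathcal C}(x)=x\otimes_A x$ and $\epsilon_{\mathcal C}(x)=1_A$. $H$-linearity of $\rho_{A,\mathcal C}$ comes from condition (2) for $\mathcal C$ plus $h.x=\epsilon_H(h)x$. You also rightly read the paper's ``right $H$-module structure'' as the given left $H$-action on $A$.

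The paper states this lemma without any proof, so there is nothing to compare against. Your argument is the routine verification it leaves to the reader; the key identity $\Delta_{\mathcal C}(x\blacktriangleleft a)=x\otimes_A(x\blacktriangleleft a)$ is the same one the paper later uses in Lemma 4.5.
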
 

For any right $(H,{\mathcal C})$-comodule $M$, the vector space
$$M^{H,co{\mathcal C}, x}=\{ m \in M, \quad {\rho}_{M , {\mathcal C}}(m)=m \otimes_A x \quad \hbox{and} \quad hm =\epsilon_H(h)m \quad \forall \quad h \in H\}$$ is called the $k$-submodule of $(H,{\mathcal C} , x)$-coinvariants of $M$.

We have $$A^{H,co{\mathcal C}, x}=\{a \in A, \quad x \blacktriangleleft a=a\blacktriangleright x \quad \hbox{and} \quad h.a=\epsilon_H(h)a \quad \forall \quad h \in H\}.$$

\begin{lemma} $A^{H,co{\mathcal C}, x}$ is an $H$-subalgebra of $A$ called the subalgebra of $(H,{\mathcal C},x)$-coinvariants of $A$.
\end{lemma}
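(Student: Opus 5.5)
We have to show that $A^{H,co{\mathcal C},x}$ is a subalgebra of $A$ and that it is stable under the $H$-action. The $H$-stability comes for free from the definition. If $a\in A^{H,co{\mathcal C},x}$ and $h\in H$, then $h.a=\epsilon_H(h)a$ is a scalar multiple of $a$. The defining conditions are $k$-linear, so $A^{H,co{\mathcal C},x}$ is a $k$-submodule, and hence $h.a$ lies in it again. In particular the $H$-action on the subalgebra is trivial, just as for $A^H$. So the real content is closure under multiplication and containment of $1_A$.

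First we check that $1_A$ is in the set. We have $x\blacktriangleleft 1_A=x=1_A\blacktriangleright x$ because $\mathcal C$ is a unital $A$-bimodule, and $h.1_A=\epsilon_H(h)1_A$ because $A$ is an $H$-module algebra.

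Next we take $a,b\in A^{H,co{\mathcal C},x}$. Using only the bimodule axioms of $\mathcal C$ and the two coinvariance equations, we compute
$$x\blacktriangleleft (ab)=(x\blacktriangleleft a)\blacktriangleleft b=(a\blacktriangleright x)\blacktriangleleft b=a\blacktriangleright (x\blacktriangleleft b)=a\blacktriangleright(b\blacktriangleright x)=(ab)\blacktriangleright x.$$
For the $H$-invariance we use the module algebra axiom:
$$h.(ab)=(h_1.a)(h_2.b)=\epsilon_H(h_1)\epsilon_H(h_2)ab=\epsilon_H(h)ab.$$
Hence $ab\in A^{H,co{\mathcal C},x}$, and the set is a $k$-subalgebra of $A$ that is $H$-stable, i.e. an $H$-subalgebra.

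The only point requiring care, and the one I expect to be the main obstacle, is the identification of $A^{H,co{\mathcal C},x}$ with the explicit set displayed just before the statement. I will take that description as given. If it needs justification, it follows from Lemma 4.2(1): under $A\otimes_A{\mathcal C}\cong{\mathcal C}$, the condition $\rho_{A,{\mathcal C}}(a)=a\otimes_A x$ reads $x\blacktriangleleft a=a\blacktriangleright x$. Alternatively, one can argue abstractly via Lemma 4.2(1). Coinvariants of a comodule algebra in this sense are the set where the two maps $a\mapsto x\blacktriangleleft a$ and $a\mapsto a\blacktriangleright x$ agree. These maps are respectively right and left $A$-linear, and the computation above is exactly the standard equalizer argument showing this set is multiplicatively closed.
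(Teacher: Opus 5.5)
Your proof is correct. The paper states this lemma without proof, and your direct check is exactly the routine verification it leaves to the reader: $1_A$ lies in the set, closure under products follows from the bimodule axioms of $\mathcal C$ together with the module-algebra axiom of $A$, and $H$-stability holds because $H$ acts through the scalars $\epsilon_H(h)$. Your reading of the condition $\rho_{A,\mathcal C}(a)=a\otimes_A x$ as $x\blacktriangleleft a=a\blacktriangleright x$, via $A\otimes_A\mathcal C\cong\mathcal C$, matches the paper's explicit description of $A^{H,co{\mathcal C},x}$.
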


Let $M$ be a right $(H,{\mathcal C})$-comodule. We can show that $M^{H,co{\mathcal C}, x}$ and $_HHom^{\mathcal C}(A , M)$ are right $A^{H,co {\mathcal C}, x}$-modules: the $A^{H,co {\mathcal C}, x}$-action on $_HHom^{\mathcal C}(A , M)$ is defined by $(fb)(a)=f(ba)$ for every $f \in {}_HHom^{\mathcal C}(A , M)$, $b \in A^{H,co {\mathcal C}, x}$ and $a \in A$. We can also show that the map
$$\psi: {}_HHom^{\mathcal C}(A , M) \rightarrow M^{H,co{\mathcal C}, x}, \quad f \mapsto f(1_A)$$
is an isomorphism of right $A^{H,co {\mathcal C}, x}$-modules. Furthermore, the algebras ${}_HEnd^{\mathcal C}(A)$ and $A^{H,co {\mathcal C},x}$ are isomorphic.

\begin{lemma} Set $B=A^{H,co {\mathcal C}, x}$. Let $M$ be a right $B$-module.

(1) If $N$ is an $H$-module which is also a $(B,A)$-bimodule (for example $N=A$), then $M \otimes_B N$ is a left-right $(H,A)$-module: the $A$-action is given by $(m \otimes_B n)a=m \otimes_B (na)$, the $H$-action is given by $h(m \otimes_B n)=m \otimes_B (hn)$. 

(2) If $N$ is a right $(H,{\mathcal C})$-comodule which is also a $(B,A)$-bimodule such that the $\mathcal C$-coaction is left $B$-linear (for example, $N=A$), then $M \otimes_B N$ is a left-right $(H,{\mathcal C})$-comodule: the $A$-action is given by $(m \otimes_B n)a=m \otimes_B (na)$, the $H$-action is given by $h(m \otimes_B n)=m \otimes_B (hn)$ and the coaction is given by $\rho_{M \otimes_BN,{\mathcal C}}(m \otimes_B n)=(m \otimes_B n_0) \otimes_A n_1$.

\end{lemma}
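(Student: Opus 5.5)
The plan is to check, in order, that each structure on $M \otimes_B N$ is well defined, that each is a module structure, and that the required compatibilities hold. The key observation is that $B = A^{H,co{\mathcal C},x}$ acts only on the left of $N$. The right $A$-action, the $H$-action and the coaction all act on the right tensor factor (or on $H$), so each commutes with the left $B$-action. Every structure is therefore of the form $id_M \otimes_B \phi$ for a suitable left $B$-linear map $\phi$.

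For (1), I would first note that the right $A$-action $n \mapsto na$ is left $B$-linear, since $N$ is a $(B,A)$-bimodule. Hence $id_M \otimes_B (-\,a)$ is well defined, and $M \otimes_B N$ is a right $A$-module. For the $H$-action I need $h(bn) = b(hn)$ for $b \in B$. Implicitly this requires the $H$-module structure of $N$ to be compatible with the left $B$-action in the sense $h(bn) = (h_1.b)(h_2 n)$. Since $h.b = \epsilon_H(h)b$ for $b \in B$, this gives $h(bn) = b(hn)$. I would state this compatibility as part of the meaning of the hypothesis. For $N = A$ it is just the module-algebra property: $h.(ba) = (h_1.b)(h_2.a) = b(h.a)$. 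Then $m \otimes_B n \mapsto m \otimes_B hn$ is well defined. Compatibility (2) follows directly from that of $N$:
$$h((m\otimes_B n)a) = m \otimes_B h(na) = m\otimes_B (h_1n)(h_2.a) = (h_1(m\otimes_B n))(h_2.a).$$

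For (2), the coaction is the composite
$$M \otimes_B N \xrightarrow{id_M \otimes_B \rho_{N,{\mathcal C}}} M \otimes_B (N \otimes_A {\mathcal C}) \cong (M \otimes_B N) \otimes_A {\mathcal C}.$$
It is well defined precisely because $\rho_{N,{\mathcal C}}$ is assumed left $B$-linear, and the last map is associativity of the tensor product. Coassociativity and counitality are obtained by applying $id_M \otimes_B -$ to the corresponding identities for $N$, using the same associativity isomorphisms. Right $A$-linearity and $H$-linearity of the coaction likewise follow from those of $\rho_{N,{\mathcal C}}$: with the diagonal action on $(M\otimes_B N)\otimes_A{\mathcal C}$, one gets $h\rho(m\otimes_B n) = (m\otimes_B h_1n_0)\otimes_A h_2n_1 = \rho(m\otimes_B hn)$.

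For the example $N = A$, I would check that $\rho_{A,{\mathcal C}}(a) = x \blacktriangleleft a$ is left $B$-linear:
$$\rho_{A,{\mathcal C}}(ba) = x \blacktriangleleft (ba) = (x\blacktriangleleft b)\blacktriangleleft a = (b \blacktriangleright x)\blacktriangleleft a = b\blacktriangleright(x\blacktriangleleft a),$$
using the defining condition of $A^{H,co{\mathcal C},x}$. The main obstacle is only bookkeeping. One must make sure the identification $M \otimes_B(N\otimes_A{\mathcal C}) \cong (M\otimes_B N)\otimes_A {\mathcal C}$ respects all three structures. One must also make explicit the implicit compatibility between the $H$- and $B$-actions on $N$ that is needed in (1).
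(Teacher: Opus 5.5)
Your proposal is correct and follows essentially the same route as the paper. Both arguments get well-definedness from left $B$-linearity of the right $A$-action, of the $H$-action (using $H$-invariance of $B$) and of the coaction, and then transfer the compatibility and $H$-linearity computations from $N$ to $M\otimes_B N$. You are somewhat more careful than the paper: you make explicit the tacit assumption $h(bn)=b(hn)$ (the paper only says ``$B$ is a trivial $H$-module''), and you check coassociativity, counitality and the left $B$-linearity of $\rho_{A,{\mathcal C}}$ for $N=A$, none of which the paper verifies. Both arguments also tacitly use that $N$ satisfies $h(na)=(h_1n)(h_2.a)$.
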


\begin{proof} (1) Clearly, $M \otimes_B N$ is a right $A$-module. Note that the $H$-action is well defined because $B$ is a trivial $H$-module. It is easy to see that $M \otimes_B N$ is an $H$-module for the given structure.
We have 
$$\begin{array}{rcl} h[(m \otimes_B n)a]&=& h(m \otimes_B (na))\\
&=& m \otimes_B [h(na)]\\
&=& m \otimes_B (h_1n)(h_2.a)\\
&=& (m \otimes_B (h_1n))(h_2.a)\\
&=& h_1(m \otimes_B n)(h_2.a).\end{array}$$
So  $M \otimes_B N$ is a left-right $(H,A)$-module.

(2) Since the $\mathcal C$-coaction of $N$ is left $B$-linear, the $\mathcal C$-coaction of $M \otimes_BN$ is well defined. We  have
$$\begin{array}{rcl}\rho_{M \otimes_BA,{\mathcal C}}(h[m \otimes_B n]) &=& \rho_{M \otimes_BA,\mathcal C}(m \otimes_B(hn))\\
&=& m \otimes_B(hn)_0 \otimes_A (hn)_1\\
&=& m \otimes_B(h_1n_0) \otimes_A (h_2n _1)\\
&=& m \otimes_Bh(n_0 \otimes_A n _1)\\
&=& h(m \otimes_B n_0 \otimes_A n _1)\\
&=& h[\rho_{M \otimes_BN,{\mathcal C}}(m \otimes_B n)].\end{array},$$
thus $\rho_{M \otimes_BA,{\mathcal C}}$ is $H$-linear. It is easy to show that $\rho_{M \otimes_BA,{\mathcal C}}$ is right $A$-linear.
\end{proof}

Let $\mathcal C$ be an $(H,A)$-coring containing a fixed $H$-grouplike element $x$. Set $B=A^{H,co{\mathcal C}, x}$. By Lemme 4.2, $A$ is an object of $_H{\mathcal M}^{\mathcal C}$.
Then replacing $\Lambda$ with $A$, the functors $F$ and $G$ introduced in Section 3 become
$$F=(-) \otimes_B A : {\mathcal M}_B \rightarrow {}_H{\mathcal M}^{{\mathcal C}}; \quad N \mapsto N \otimes_BA$$
$$G=(-)^{H,co{\mathcal C}, x}: {}_H{\mathcal M}^{{\mathcal C}} \rightarrow {\mathcal M}_B; \quad M \mapsto M^{H,co{\mathcal C}, x}.$$
The unit of the adjunction pair $(F,G)$ is given by
$$u_N : N \rightarrow (N \otimes_B A)^{H,co{\mathcal C} , x}; n \mapsto (n \otimes_B 1_A)$$
for every right $B$-module $N$ while the counit is
$$c_M : M^{H,co{\mathcal C}, x} \otimes_B A \rightarrow M; m \otimes_B a \mapsto ma$$
for every right $(H,{\mathcal C})$-comodule $M$. Let $\mathcal C$ be flat as left $A$-module. We deduce from Theorems 3.2 and 3.3 necessary and sufficient conditions for projectivity and flatness of a module over $B$ if $A$ is finitely generated (finitely presented) in $_H{\mathcal M}^{\mathcal C}$. 

Until the end of the section, $H$ is a Hopf algebra with a bijective antipode.

\begin{lemma} $A$ is an object of $_H{\mathcal M}^{\mathcal C}$ and a cyclic left $(^{\prime}{\mathcal C})^{op} \# H$-module: the right ${^{\prime}{\mathcal C}}$-module action on $A$ is defined by $a \leftharpoonup f=f(x \blacktriangleleft a)$ for all $f \in {^{\prime}{\mathcal C}}$ and $a \in A$.
\end{lemma}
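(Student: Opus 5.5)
The first claim, that $A$ is an object of $_H{\mathcal M}^{\mathcal C}$, is Lemma 4.2(1), so it needs no further argument. Since $H$ is now a Hopf algebra with bijective antipode, Lemma 2.10 makes $A$ a right $^{\prime}{\mathcal C}$-module in $_H{\mathcal M}_{^{\prime}{\mathcal C}}$, via $a f=a_0 f(a_1)$. By \cite[Proposition 8.4.1]{Caen}, $A$ is then a left $(^{\prime}{\mathcal C})^{op}\#H$-module.

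I will first compute this action explicitly. We have $\rho_{A,{\mathcal C}}(a)=1_A\otimes_A(x\blacktriangleleft a)$, so $a_0\otimes_A a_1=1_A\otimes_A (x\blacktriangleleft a)$. Hence
$$a\leftharpoonup f=1_A\, f(x\blacktriangleleft a)=f(x\blacktriangleleft a),$$
which is the formula in the statement. As a sanity check, $f=i(a')$ gives
$$a\leftharpoonup i(a')=\epsilon_{\mathcal C}(x\blacktriangleleft a)a'=\epsilon_{\mathcal C}(x)aa'=aa',$$
which agrees with the right $A$-module structure. This is consistent with $mi(a)=ma$ from Lemma 2.10. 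If one preferred not to quote Lemma 2.10, one could verify directly that $(a\leftharpoonup f)\leftharpoonup g=a\leftharpoonup(f\#g)$, using $\Delta_{\mathcal C}(x\blacktriangleleft a)=x\otimes_A(x\blacktriangleleft a)$. One could also verify the compatibility $h(a\leftharpoonup f)=(h_1.a)\leftharpoonup(h_2f)$ from the $H$-invariance of $x$ and the formula for the $H$-action on $^{\prime}{\mathcal C}$ in Lemma 2.9. These checks are routine.

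The remaining point is cyclicity, and I will show that $1_A$ generates $A$. For $a\in A$ take $f=i(a)\in{^{\prime}{\mathcal C}}$. Then
$$1_A\leftharpoonup i(a)=i(a)(x)=\epsilon_{\mathcal C}(x)a=a.$$
In the smash product $(^{\prime}{\mathcal C})^{op}\#H$, the element $i(a)\#1_H$ therefore sends $1_A$ to $a$. Consequently $A=((^{\prime}{\mathcal C})^{op}\#H)\cdot 1_A$, so $A$ is cyclic. In fact $A$ is cyclic already as a $^{\prime}{\mathcal C}$-module, and $H$ is not needed for this. The action of $H$ on the generator is simply $h.1_A=\epsilon_H(h)1_A$.

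The only place where I expect some care is in matching conventions. I need to confirm that the left $(^{\prime}{\mathcal C})^{op}\#H$-action, $(f\#h)\cdot a$, obtained from the isomorphism $_H{\mathcal M}_{^{\prime}{\mathcal C}}\cong{}_{(^{\prime}{\mathcal C})^{op}\#H}{\mathcal M}$, is $(ha)\leftharpoonup f$ or $(h_?a)\leftharpoonup f$ in whichever order \cite{Caen} uses. With $h=1_H$ the order is irrelevant, so the generator argument goes through in either convention.
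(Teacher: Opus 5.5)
Your proof is correct, but it gets the $({}^{\prime}{\mathcal C})^{op}\#H$-module structure by a different route from the paper; the cyclicity step $1_A\leftharpoonup i(a)=i(a)(x)=a$ is the same in both.

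The paper does not invoke Lemma 2.10 at all. It checks the module axioms by hand:
\begin{itemize}
\item $a\leftharpoonup\epsilon_{\mathcal C}=a$;
\item $a\leftharpoonup(f\#g)=(a\leftharpoonup f)\leftharpoonup g$, using $\Delta_{\mathcal C}(x\blacktriangleleft a)=x\otimes_A(x\blacktriangleleft a)$;
\item $(h_1.a)\leftharpoonup(h_2f)=h(a\leftharpoonup f)$, using the formula $(hf)(c)=h_2f(S_H^{-1}(h_1).c)$, the invariance $h.x=\epsilon_H(h)x$, and $S_H^{-1}(h_2)h_1=\epsilon_H(h)$.
\end{itemize}

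You instead specialize Lemma 2.10 to the comodule of Lemma 4.2(1). The associativity and unit axioms then come from the Caenepeel--Militaru--Zhu result quoted in Lemma 2.10. The $H$-compatibility is absorbed into the $H$-linearity of $\rho_{A,{\mathcal C}}$, which is exactly where the invariance of $x$ enters. All that remains is the one-line evaluation $a_0f(a_1)=f(x\blacktriangleleft a)$.

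Your route is shorter and has a real advantage. It makes explicit that the structure on $A$ is precisely the one induced by the coaction, i.e.\ $F(A)$ in the sense of Lemma 2.11. Theorems 4.6 and 4.7 silently need this when they pass from ``finitely generated (presented) over $({}^{\prime}{\mathcal C})^{op}\#H$'' to ``finitely generated (presented) in ${}_H{\mathcal M}^{\mathcal C}$''; the paper's ad hoc definition only yields it after you notice the two formulas agree.

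The paper's computation, in turn, is self-contained for the module structure and shows concretely where the bijectivity of $S_H$ and the $H$-invariance of $x$ are used. Your route instead leans on Lemma 4.2(1), which the paper states without proof, although it is a routine check.
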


\begin{proof} Note that $$a \leftharpoonup \epsilon_{\mathcal C}=\epsilon_{\mathcal C}(x\blacktriangleleft a)=\epsilon_{\mathcal C}(x)a=1_Aa=a$$ and
$$\Delta_{\mathcal C}(x \blacktriangleleft a)=x \otimes_A (x \blacktriangleleft a).$$
Now for $f , g \in {}^{\prime}{\mathcal C}$, we have
$$a \leftharpoonup (f\#g)=(f\#g)(x \blacktriangleleft a)=g(x \blacktriangleleft [f(x \blacktriangleleft a)])=(a \leftharpoonup f) \leftharpoonup g.$$
Then $A$ is a right $^{\prime}{\mathcal C}$-module. Clearly, $A$ is an $H$-module. By Lemma 1.9, we have 
$$(hf)(c)=h_2f(S_H^{-1}(h_1).c) \quad \forall h \in H, \quad c \in {\mathcal C} \quad f \in {^{\prime}{\mathcal C}}.$$
So we have
$$\begin{array}{rcl}(h_1.a) \leftharpoonup (h_2f) &=& (h_2f)(x \blacktriangleleft (h_1.a))\\
&=& h_{3}f[S_H^{-1}(h_{2}).(x \blacktriangleleft (h_1.a))]\\
&=& h_{4}f[(S_H^{-1}(h_{3}).x) \blacktriangleleft (S_H^{-1}(h_{2})(h_1.a))]\\
&=& h_{3}f[x \blacktriangleleft S_H^{-1}(h_{2})(h_1.a)]\\
&=& h_{2}f[x \blacktriangleleft \epsilon_H(h_1)a]\\
&=& hf(x \blacktriangleleft a)\\
&=& h(a \leftharpoonup f).
\end{array}$$
So $A$ is a left-right $(H,^{\prime}{\mathcal C})$-module. Therefore, $A$ is a left $(^{\prime}{\mathcal C})^{op} \# H$-module. Now we have $a=1_A \leftharpoonup i(a)$ for all $a \in A$, where $i$ is the $H$-module homomorphism defined from $A$ to  $^{\prime}{\mathcal C}$ by $i(a)(c)=\epsilon_{\mathcal C}(c)a$. So $A$ is generated as a right $^{\prime}{\mathcal C}$-module by $1_A$; or equivalently, $A$ is generated as a left $(^{\prime}{\mathcal C})^{op}$-module by $1_A$. Since every element of $(^{\prime}{\mathcal C})^{op}$ can be identified with an element of $(^{\prime}{\mathcal C})^{op} \# H$ and since $i(a)$ is an element of $^{\prime}{\mathcal C}$, we conclude that $A$ is generated as a left $(^{\prime}{\mathcal C})^{op} \# H$-module by $1_A$.
\end{proof}

From Theorem 3.2, we obtain the following result.

\begin{theorem} Assume that $\mathcal C$ is projective as a left $A$-module and contains a fixed $H$-grouplike element $x$. Set 
$B=A^{H,co{\mathcal C}, x}$. For $P \in {\mathcal M}_B$, we consider the following statements.

(1) $P \otimes_B A$ is projective in $_H{\mathcal M}^{\mathcal C}$ and $u_P$ is injective;

(2) $P$ is projective as a right $B$-module;

(3) $P \otimes_B A$ is a direct summand in $_H{\mathcal M}^{\mathcal C}$ of some $A^{(I)}$, and $u_P$ is bijective;

(4) there exists $Q \in{}_H{\mathcal M}^{\mathcal C}$ such that $Q$ is a direct summand of some $A^{(I)}$, and $P \cong {}_HHom^{\mathcal C}(A ,
Q)$ in ${\mathcal M}_B$;

(5) $P \otimes_B A$ is a direct summand in $_H{\mathcal M}^{\mathcal C}$ of some $A^{(I)}$.

Then (1) $\Rightarrow$ (2) $\Leftrightarrow$ (3) $\Leftrightarrow$ (4) $\Rightarrow$ (5).

If $A$ is semi-$\Sigma$-quasi-projective in $_H{\mathcal M}^{{\mathcal C}}$, then (5) $\Rightarrow$ (3). If $A$ is projective in $_H{\mathcal M}^{{\mathcal C}}$, then (3) $\Rightarrow$ (1).

\end{theorem}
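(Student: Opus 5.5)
The plan is to specialize Theorem 3.2 to $\Lambda=A$. By Lemma 4.2(1), $A$ is an object of $_H{\mathcal M}^{\mathcal C}$, and by the discussion after Lemma 4.3 the endomorphism ring ${}_HEnd^{\mathcal C}(A)$ is isomorphic to $B=A^{H,co{\mathcal C},x}$ via $f\mapsto f(1_A)$. I will transport the right module structures along this isomorphism, so ${\mathcal M}_B$ is identified with the category of right ${}_HEnd^{\mathcal C}(A)$-modules. I also need the tensor functor $P\otimes_B A$ of Lemma 4.4 to correspond to $P\otimes_{{}_HEnd^{\mathcal C}(A)}A$. This holds because the left action of $b\in B$ on $A$ (multiplication $a\mapsto ba$) is exactly the action of the endomorphism $a\mapsto ba$; this map is colinear since $x\blacktriangleleft(ba)=(b\blacktriangleright x)\blacktriangleleft a=b\blacktriangleright(x\blacktriangleleft a)$, and it is $H$-linear since $b$ is $H$-invariant. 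Under this identification the unit $u_P$ of Section 3 becomes the map $p\mapsto p\otimes_B 1_A$ into $(P\otimes_BA)^{H,co{\mathcal C},x}$, via $\psi$. Hence statements (1)--(5) are literally statements (1)--(5) of Theorem 3.2 with $\Lambda=A$.

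Theorem 3.2 has two hypotheses to check. First, ${\mathcal C}$ must be flat as a left $A$-module. This is immediate, since projective modules are flat. Second, $A$ must be a finitely generated object of $_H{\mathcal M}^{\mathcal C}$. Here I use that $H$ is now a Hopf algebra with bijective antipode. By Lemma 4.5, $A$ is a cyclic left $(^{\prime}{\mathcal C})^{op}\#H$-module generated by $1_A$, so it is finitely generated in $_{(^{\prime}{\mathcal C})^{op}\#H}{\mathcal M}$. Because ${\mathcal C}$ is left $A$-projective, the remarks after Lemma 2.10 say that $_H{\mathcal M}^{\mathcal C}$ is a full subcategory of $_H{\mathcal M}_{^{\prime}{\mathcal C}}\cong{}_{(^{\prime}{\mathcal C})^{op}\#H}{\mathcal M}$, and item (2) there says that such an object is finitely generated in $_H{\mathcal M}^{\mathcal C}$.

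With both hypotheses verified, Theorem 3.2 gives the chain (1) $\Rightarrow$ (2) $\Leftrightarrow$ (3) $\Leftrightarrow$ (4) $\Rightarrow$ (5). It also gives (5) $\Rightarrow$ (3) when $A$ is semi-$\Sigma$-quasi-projective, and (3) $\Rightarrow$ (1) when $A$ is projective in $_H{\mathcal M}^{\mathcal C}$. In (4), ${}_HHom^{\mathcal C}(A,Q)$ is viewed as a right $B$-module through the same isomorphism, or equivalently as $Q^{H,co{\mathcal C},x}$ via $\psi$.

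The main obstacle is the finite generation step. I must check that the inclusion $_H{\mathcal M}^{\mathcal C}\subset{}_{(^{\prime}{\mathcal C})^{op}\#H}{\mathcal M}$ preserves direct sums, which holds because both are computed on underlying modules. I must also confirm that the fullness claim after Lemma 2.10 really needs only left projectivity of ${\mathcal C}$, as in \cite[19.2]{brWi}, together with $H$-linearity. The identification of the two $B$-module structures is routine bookkeeping.
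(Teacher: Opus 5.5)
Your proposal is correct and follows the paper's proof. The paper likewise uses Lemma 4.5 and the remarks after Lemma 2.10 to conclude that $A$ is finitely generated in $_H{\mathcal M}^{\mathcal C}$, and then applies Theorem 3.2 with $\Lambda=A$. Your extra bookkeeping makes explicit what the paper leaves implicit: that projectivity of ${\mathcal C}$ gives the needed flatness, that ${}_HEnd^{\mathcal C}(A)\cong B$ is compatible with the module structures, and that $u_P$ corresponds to $p\mapsto p\otimes_B 1_A$ via $\psi$.
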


\begin{proof} By Lemma 4.5, $A$ is an object of $_H{\mathcal M}^{{\mathcal C}}$ that is finitely generated as a left $(^{\prime}{\mathcal C})^{op} \# H$-module. So $A$ is finitely generated in $_H{\mathcal M}^{{\mathcal C}}$. 
\end{proof}

From Theorem 3.3, we obtain the following result.

\begin{theorem} Assume that $\mathcal C$ is projective as a left $A$-module, contains a fixed $H$-grouplike element $x$, and that $A$ is finitely presented as a left $(^{\prime}{\mathcal C})^{op} \#H$-module. Set $B=A^{H,co{\mathcal C}, x}$. For $P \in {\mathcal M}_B$, the following assertions are equivalent.

(1) $P$ is flat as a right $B$-module;

(2) $P \otimes_B A=\limind Q_i$, where $Q_i \cong A^{n_i}$ in
$_H{\mathcal M}^{\mathcal C}$ for some positive integer $n_i$, and $u_P$ is bijective;

(3) $P \otimes_B A=\limind Q_i$, where $Q_i \in {}_H{\mathcal M}^{\mathcal C}$ is a direct summand of some $A^{(I_i)}$ in $_H{\mathcal M}^{\mathcal C}$,
and $u_P$ is bijective;

(4) there exists $Q=\limind Q_i \in {}_H{\mathcal M}^{\mathcal C}$, such that $Q_i \cong A^{n_i}$ for some positive integer $n_i$ and $_HHom^{\mathcal C}(A, Q) \cong P$ in ${\mathcal M}_B$;

(5) there exists $Q=\limind Q_i \in {}_H{\mathcal M}^{\mathcal C}$, such that $Q_i$ is a direct summand of some $A^{(I_i)}$ in $_H{\mathcal M}^{{\mathcal C}}$, and $_HHom^{\mathcal C}(A, Q) \cong P$ in ${\mathcal M}_B$.

\bigskip

If $A$ is semi-$\Sigma$-quasi-projective in $_H{\mathcal M}^{\mathcal C}$, these conditions are also equivalent without the assumption that $u_P$ is bijective in conditions (2) and (3).

\end{theorem}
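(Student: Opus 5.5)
The plan is to deduce Theorem 4.7 from Theorem 3.3 applied with $\Lambda=A$, in the same way Theorem 4.6 was deduced from Theorem 3.2. Two things must be checked: that the hypotheses of Theorem 3.3 hold, and that the endomorphism ring and the functors of Section 3 become the ones in the statement.

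First, the flatness hypothesis. Since $\mathcal C$ is projective as a left $A$-module, it is flat as a left $A$-module. So $_H{\mathcal M}^{\mathcal C}$ is a Grothendieck category by Lemma 2.7, and the standing assumption of Section 3 holds.

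Second, that $A$ is an admissible $\Lambda$. By Lemma 4.2 (or Lemma 4.5), $A$ is an object of $_H{\mathcal M}^{\mathcal C}$, with coaction $a\mapsto x\blacktriangleleft a$. We need $A$ to be finitely presented in $_H{\mathcal M}^{\mathcal C}$. By hypothesis, $A$ is finitely presented as a left $(^{\prime}{\mathcal C})^{op}\#H$-module. Here I would invoke consequence (3) of the discussion preceding Lemma 2.11, which applies because $\mathcal C$ is left $A$-projective and $H$ has a bijective antipode.

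That discussion rests on the full embedding $_H{\mathcal M}^{\mathcal C}\subseteq {}_H{\mathcal M}_{^{\prime}{\mathcal C}}\cong {}_{(^{\prime}{\mathcal C})^{op}\#H}{\mathcal M}$, that is, ${}_HHom^{\mathcal C}(M,N)={}_{(^{\prime}{\mathcal C})^{op}\#H}Hom(M,N)$. It also uses the fact that direct limits in $_H{\mathcal M}^{\mathcal C}$ are computed as direct limits of the underlying modules. This holds because $\mathcal C$ is left $A$-flat, so $-\otimes_A{\mathcal C}$ commutes with direct limits. Consequently ${}_HHom^{\mathcal C}(A,-)$ commutes with direct limits in $_H{\mathcal M}^{\mathcal C}$, and $A$ is finitely presented there.

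Third, the identification of $B$ and the adjunction. I would use the isomorphism of algebras ${}_HEnd^{\mathcal C}(A)\cong A^{H,co{\mathcal C},x}=B$, given by $f\mapsto f(1_A)$, together with the isomorphism of right $B$-modules $\psi:{}_HHom^{\mathcal C}(A,M)\to M^{H,co{\mathcal C},x}$ stated before Lemma 4.4. Transporting along these, $F=-\otimes_B A$ and $G$ become the functors described after Lemma 4.4. The unit $u_P$ and the counit agree with those of Section 3 up to the isomorphism $\psi$. Hence the statements ``$u_P$ bijective'' coincide, and $_HHom^{\mathcal C}(A,Q)\cong P$ in ${\mathcal M}_B$ means the same thing in both settings.

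Once these points are in place, assertions (1)--(5), and the final statement about semi-$\Sigma$-quasi-projectivity, are exactly those of Theorem 3.3 with $\Lambda=A$, so the theorem follows. The main obstacle is the second step: carefully justifying that finite presentation in $_{(^{\prime}{\mathcal C})^{op}\#H}{\mathcal M}$ transfers to $_H{\mathcal M}^{\mathcal C}$. In particular, one must check that inductive limits in $_H{\mathcal M}^{\mathcal C}$ agree with those in the module category, which is where left $A$-flatness of $\mathcal C$ is used. The other steps are bookkeeping.
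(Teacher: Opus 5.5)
Your proposal is correct and follows the paper's argument: the paper obtains Theorem 4.7 from Theorem 3.3 with $\Lambda=A$, and its proof consists only of the observation that $A$, being finitely presented as a left $(^{\prime}{\mathcal C})^{op}\#H$-module, is finitely presented in ${}_H{\mathcal M}^{\mathcal C}$ (consequence (3) before Lemma 2.11). Your extra bookkeeping (projective implies flat, and the identification ${}_HEnd^{\mathcal C}(A)\cong A^{H,co{\mathcal C},x}$ compatible with the module structures) makes explicit what the paper leaves implicit. One small correction: direct limits in ${}_H{\mathcal M}^{\mathcal C}$ are computed on underlying modules without any flatness, since $-\otimes_A{\mathcal C}$ always commutes with direct limits; the real role of left $A$-projectivity at that step is to make the embedding into ${}_{(^{\prime}{\mathcal C})^{op}\#H}{\mathcal M}$ full.
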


\begin{proof} $A$ is an object of $_H{\mathcal M}^{{\mathcal C}}$ that is finitely presented as a left $(^{\prime}{\mathcal C})^{op} \# H$-module. So $A$ is finitely presented in $_H{\mathcal M}^{{\mathcal C}}$. 
\end{proof}

The following lemma will enable us to improve Theorems 4.6 and 4.7 if $P$ is a left-right $(H,A)$-module, a particular $B$-module.

\begin{lemma} Set $B=A^{H,co{\mathcal C}, x}$. Let $N$ be a left-right $(H,A)$-module. Then $u_N$ is an injection of right $B$-modules.
\end{lemma}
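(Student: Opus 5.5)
The plan is to construct an explicit left inverse of $u_N$ out of the right $A$-module structure of $N$. Recall that $u_N : N \rightarrow (N \otimes_B A)^{H,co{\mathcal C},x}$, $n \mapsto n \otimes_B 1_A$. Here $N$ is regarded as a right $B$-module by restricting its right $A$-action along the inclusion $B=A^{H,co{\mathcal C},x}\subseteq A$ of Lemma 4.4. Two facts are already in hand. First, $u_N$ is right $B$-linear: this follows from the general adjunction of Section 3, or directly from $(nb)\otimes_B 1_A = n\otimes_B b = (n\otimes_B 1_A)b$. Second, $u_N$ lands in the coinvariants. Indeed, $h(n\otimes_B 1_A)=n\otimes_B h.1_A=\epsilon_H(h)(n\otimes_B 1_A)$, and by Lemma 4.2 and Lemma 4.5 we have $\rho(n\otimes_B 1_A)=(n\otimes_B 1_A)\otimes_A x$, using $\rho_{A,{\mathcal C}}(1_A)=1_A\otimes_A x$. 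So the only thing left to prove is injectivity.

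For injectivity, I will use the multiplication map
$$\mu_N : N\otimes_B A \rightarrow N,\qquad n\otimes_B a\mapsto na.$$
This map is well defined, since $(nb)a=n(ba)$ for $b\in B\subseteq A$; this is just associativity of the right $A$-action on $N$. Composing gives $\mu_N(u_N(n)) = \mu_N(n\otimes_B 1_A)=n1_A=n$. Restricting $\mu_N$ to $(N\otimes_B A)^{H,co{\mathcal C},x}\subseteq N\otimes_B A$ therefore produces a left inverse of $u_N$, and $u_N$ is injective. It is worth noting that $\mu_N$ is in fact $H$-linear and right $A$-linear. However, the argument needs only that it is a well-defined $k$-linear map, so its compatibility with the comodule structures never has to be checked.

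The one point needing care, and the expected obstacle, is the meaning of $N\otimes_B A$ as an object of ${}_H{\mathcal M}^{\mathcal C}$ when $N$ carries its own $H$-action. By Lemma 4.4(2), the $H$-action on $N\otimes_B A$ acts only on the $A$-factor, and the coaction is $n\otimes_B a\mapsto (n\otimes_B x)\otimes_A\ldots$ coming from $\rho_{A,{\mathcal C}}$. The $(H,A)$-structure of $N$ plays no role in the construction of $F(N)$; only its underlying right $B$-module is used. I will state this explicitly, so that the retraction argument above applies verbatim. The hypothesis that $N$ is a left-right $(H,A)$-module is used only to supply the right $A$-action on which $\mu_N$ is built.
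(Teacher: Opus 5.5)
Your proof is correct, but it takes a different route from the paper's. The paper never writes down a retraction of $u_N$ by hand. It forms the right $(H,{\mathcal C})$-comodule $W=N\otimes_A{\mathcal C}$ (diagonal $H$-action, coaction $id_N\otimes\Delta_{\mathcal C}$). Following the proof of 18.10(1) in Brzezi\'nski--Wisbauer, it asserts that $\varphi\colon G(W)\to N$, $n\otimes_A c\mapsto n\epsilon_{\mathcal C}(c)$, is an isomorphism of right $B$-modules. It then reads off injectivity from the triangle identity $G(c_W)\circ u_{G(W)}=id_{G(W)}$ in $(\star)$.

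That argument is categorical and mirrors the coring literature, but everything rests on the identification $G(N\otimes_A{\mathcal C})\cong N$, and this step is delicate. The $w\in N\otimes_A{\mathcal C}$ with $(id_N\otimes_A\Delta_{\mathcal C})(w)=w\otimes_A x$ are exactly the elements $n\otimes_A x$. Since $x$ is $H$-invariant, $h(n\otimes_A x)=(hn)\otimes_A x$. Imposing $H$-invariance and applying $id_N\otimes_A\epsilon_{\mathcal C}$ therefore cuts $G(W)$ down to a copy of $N^H$, not $N$, unless $H$ acts trivially on $N$.

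Your multiplication map $\mu_N$ avoids this issue entirely. It needs no flatness and no coalgebraic identification. It also shows more: because $\mu_N$ is right $A$-linear, hence right $B$-linear, $u_N$ is split injective as a map of right $B$-modules. This holds for every right $A$-module $N$, so the $H$-structure of $N$ plays no role.

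A few small slips, none of which affect the argument:
\begin{itemize}
\item Your aside that $\mu_N$ is $H$-linear is false in general. We have $\mu_N(h(n\otimes_B 1_A))=\epsilon_H(h)n$, whereas $h\,\mu_N(n\otimes_B 1_A)=hn$. This is exactly because $F(N)$ ignores the $H$-action of $N$, as you yourself observe. Fortunately your proof never uses this claim.
\item The coaction on $N\otimes_B A$ is $n\otimes_B a\mapsto (n\otimes_B 1_A)\otimes_A(x\blacktriangleleft a)$, not ``$(n\otimes_B x)\otimes_A\ldots$''.
\item The fact that $B$ is a subalgebra of $A$ is Lemma 4.3, not Lemma 4.4.
\item $\rho_{A,{\mathcal C}}(1_A)=1_A\otimes_A x$ already follows from Lemma 4.2. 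Lemma 4.5, which requires a bijective antipode, is not needed.
\end{itemize}
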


\begin{proof} We know that $\mathcal C$ is a right $(H,{\mathcal C})$-comodule with coaction $\Delta_{\mathcal C}$, and $N \otimes_A {\mathcal C}$ is a right $(H,{\mathcal C})$-comodule with coaction $id_N \otimes \Delta_{\mathcal C}$. By the proof of \cite[18.10(1)]{brWi}, the $k$-linear map
$$\varphi : (N \otimes_A {\mathcal C})^{H,co{\mathcal C}, x} \rightarrow N; n \otimes c \mapsto n \epsilon_{\mathcal C}(c)$$ is an isomorphism of right $B$-modules; i.e., $G(W) \cong N$, where $W=N \otimes_A {\mathcal C}$. Replace $M$ with $W$ in $(\star)$, then we get $G(c_W) \circ u_N =id_N$. So $u_N$ is an injection.
\end{proof}

\begin{remark} By Lemma 4.8, $u_P$ is an injection for every left-right $(H,A)$-module $P$. So in the particular case, where $P$ is a left- right $(H,A)$-module, the injection assumption in Theorem 4.6 is superflous. Therefore we can replace ``bijection'' with ``surjection'' in Theorems 4.6 and 4.7. In other words, we get necessary and sufficient conditions for a left-right $(H,A)$-module $P$ to be projective (resp. flat) as a right $B$-module.
\end{remark}

\section{Some examples}

\subsection{An $H$-algebra as an $(H,A)$-coring}
We keep the notations and conventions of the preceding sections. $H$ is a bialgebra and $A$ is an $H$-algebra. We know that $A$ is a left $(H,A)$-bimodule. Let us define $\Delta_A(a)=a \otimes_A 1_A$ and $\epsilon_A(a)=a$. Then $A$ is an  $(H,A)$-coring  projective as a left $A$-module. It is well known that the right $A$-comodules are the right $A$-modules. We deduce that the right $(H,A)$-comodules are the left-right $(H,A)$-modules. Then Theorems 3.2 and 3.3 give necessary and sufficient conditions for projectivity and flatness over the endomorphism ring $_HEnd^{A}(\Lambda)={}_HEnd_{A}(\Lambda)$, where $\Lambda$ is a left-right $(H,A)$-module finitely generated (finitely presented) in $_H{\mathcal M}_A={}_{A^{op} \#H}{\mathcal M}$. If $H=k$ is considered as a trivial bialgebra, then we recover the results of \cite{TG}.

Now assume that $H$ is a Hopf algebra with bijective antipode. Let us consider the left dual $^{\prime}A$ of $A$. Its product is defined by $f\#g=g \circ f$. Note that $(g \circ f)(a)=f(a)g(a)$ for every $a \in A$. Clearly, $1_A$ is an $H$-grouplike element of $A$. By Lemma 2.9, $^{\prime}A$ is an $H$-algebra and the smash product $^{\prime}A \# H$ is defined. Recall that the $H$-action on $^{\prime}A$ is defined by 
$$(hf)(a)=h_2f(S_H^{-1}(h_1).a)=a(hf(1_A)) \quad \forall a \in A, h \in H \quad \hbox{and} \quad f \in {^{\prime}A}.$$
We have an algebra homomorphism $i: A \rightarrow {^{\prime}A}$ defined by $i(a)(a')=a'a$ for all $a,a' \in A$. It is easy to see that $i$ is an isomorphism of $H$-algebras: its inverse is defined by $i^{-1}(f)=f(1_A)$ for all $f$ in $^{\prime}A$. Thus we have $(^{\prime}A)^{op} \# H= A^{op} \#H$. Note that $_HEnd_{A}(\Lambda )=End_{A}(\Lambda)^H$, the $H$-invariant elements of $End_{A}(\Lambda)$: the $H$-action of $End_{A}(\Lambda)$ being $(hf)(\lambda)=h_1f(S_H(h_2) \lambda)$. Thus if $\Lambda=A$, then $_HEnd_{A}(A)=A^H$ the subring of $H$-invariants of $A$: we recover the results of \cite{Guedenon 1}. 

Let us give an example of a left-right $(H,A)$-module which is finitely generated as a left $A^{op}\#H$-module.

\begin{lemma} Let $H$ be a bialgebra. Let $\Lambda$ and $A$ be $H$-algebras, $i: \Lambda \rightarrow A$ a homomorphism of $H$-algebras. Then $A$ is a left $(H, \Lambda)$-bimodule with the structures given by $\lambda a =i(\lambda)a$ and $a\lambda=ai(\lambda)$ for all $\lambda \in \Lambda$ and $a \in A$.
\end{lemma}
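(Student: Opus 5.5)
We verify the three requirements in the definition of a left $(H,\Lambda)$-bimodule directly: that $A$ is a $\Lambda$-bimodule, that it is an $H$-module, and that the compatibility conditions $(1)$ and $(2)$ hold with $\Lambda$ in place of $A$.

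First, $A$ is a $\Lambda$-bimodule via restriction of scalars along $i$. Since $i$ is a homomorphism of $k$-algebras, we have $i(\lambda\lambda')=i(\lambda)i(\lambda')$ and $i(1_\Lambda)=1_A$. Associativity in $A$ then gives $(\lambda a)\lambda'=\lambda(a\lambda')$, together with the unit and associativity axioms for each side separately. The $H$-module structure on $A$ is the given one, since $A$ is an $H$-algebra.

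Second, we check condition $(1)$. For $h\in H$, $\lambda\in\Lambda$ and $a\in A$, the $H$-module algebra property of $A$ and the $H$-linearity of $i$ give
$$h(\lambda a)=h.(i(\lambda)a)=(h_1.i(\lambda))(h_2.a)=i(h_1.\lambda)(h_2.a)=(h_1.\lambda)(h_2a).$$
Condition $(2)$ follows in the same way:
$$h(a\lambda)=h.(a\,i(\lambda))=(h_1.a)(h_2.i(\lambda))=(h_1a)(h_2.\lambda).$$
Hence, with the left $\Lambda$-action, $A$ is a left $(H,\Lambda)$-module, and with the right $\Lambda$-action it is a left-right $(H,\Lambda)$-module.

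There is no real obstacle in this argument. The only point needing care is to use both properties of $i$: multiplicativity makes the actions well defined, and $H$-linearity, i.e. $h.i(\lambda)=i(h.\lambda)$, is needed to move $h$ across $i$ in the compatibility computations. The argument holds for any bialgebra $H$ and does not use an antipode.
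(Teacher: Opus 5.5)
Your proposal is correct and follows essentially the same route as the paper. The paper's proof consists of exactly your two computations for conditions $(1)$ and $(2)$, using that $A$ is an $H$-module algebra and that $h.i(\lambda)=i(h.\lambda)$; you also spell out the restriction-of-scalars bimodule structure, which the paper leaves implicit.
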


\begin{proof} We have
$$h.(\lambda a)=h(i(\lambda)a)=(h_1.i(\lambda)(h_2.a)= i(h_1.\lambda)(h_2.a)= (h_1. \lambda)(h_2.a).$$
We also have
$$h.(a\lambda)=h.(ai(\lambda))= (h_1.a)(h_2.i(\lambda))=(h_1.a) i(h_2.\lambda)=(h_1.a)(h_2.\lambda).$$
\end{proof}

\begin{definition} (See \cite{CaeVerShu} section 2, or \cite{CG}) Let $H$ be a bialgebra. Let $\Lambda$ and $A$ be $H$-algebras, $i: \Lambda \rightarrow A$ a homomorphism of $H$-algebras. A map $\chi : A \rightarrow \Lambda$ is called a right $H$-grouplike character on $A$ if $\chi$ is a homomorphism of left-right $(H,\Lambda)$-modules and
$$\chi(\chi(a')a)=\chi(a'a), \quad \hbox{and} \quad \chi(1_A)=1_{\Lambda} \quad \forall \quad a, a' \in A.$$
We then say that $A$ is an $(H,\Lambda)$-ring with a right $H$-grouplike character $\chi$.
\end{definition}

\begin{lemma} Let $H$ be a bialgebra. Let $\Lambda$ and $A$ be $H$-algebras. If $A$ is an $(H,\Lambda)$-ring with a right $H$-grouplike character $\chi$, then $\Lambda$ is a left-right $(H,A)$-module with the right $A$-action given by $\lambda \leftharpoonup a=\chi(\lambda a)$. Furthermore, $\Lambda$ is cyclic as a right $A$-module and as a left $A^{op} \#H$-module. 
\end{lemma}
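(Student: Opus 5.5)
We must check three things: that $\lambda \leftharpoonup a=\chi(\lambda a)$ is a right $A$-action, that it is compatible with the $H$-action in the sense of $(2)$, and that $\Lambda$ is cyclic. Throughout, $\lambda a$ means $i(\lambda)a$, the left $\Lambda$-action on $A$ from Lemma 5.1, and we use the right $\Lambda$-action $a\lambda=ai(\lambda)$, with respect to which $\chi$ is right $\Lambda$-linear and $H$-linear by Definition 5.2.

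\emph{Right action.} Unitality is immediate: $\lambda\leftharpoonup 1_A=\chi(i(\lambda))=\chi(1_A\lambda)=\chi(1_A)\lambda=1_\Lambda\lambda=\lambda$, using right $\Lambda$-linearity of $\chi$ and $\chi(1_A)=1_\Lambda$. For associativity we need $(\lambda\leftharpoonup a)\leftharpoonup a'=\chi(i(\chi(i(\lambda)a))a')$ to equal $\chi(i(\lambda)aa')$. This is exactly the grouplike-character axiom $\chi(\chi(b)a')=\chi(ba')$ applied with $b=i(\lambda)a$, where $\chi(b)a'$ is read as $i(\chi(b))a'$.

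\emph{Compatibility with $H$.} Using that $\chi$ is $H$-linear, the left $(H,\Lambda)$-bimodule structure of $A$ (Lemma 5.1), and that $i$ is an $H$-algebra map, we get
$$h(\lambda\leftharpoonup a)=\chi(h.(i(\lambda)a))=\chi((h_1.i(\lambda))(h_2.a))=\chi(i(h_1\lambda)(h_2.a))=(h_1\lambda)\leftharpoonup(h_2.a),$$
which is condition $(2)$. Hence $\Lambda$ is a left-right $(H,A)$-module, and by \cite[Proposition 8.4.1]{Caen} it is a left $A^{op}\#H$-module.

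\emph{Cyclicity.} We have $1_\Lambda\leftharpoonup i(\lambda)=\chi(i(\lambda))=\chi(1_A)\lambda=\lambda$, so $1_\Lambda$ generates $\Lambda$ as a right $A$-module. Since $A^{op}$ embeds in $A^{op}\#H$ via $a\mapsto a\#1$ and this action agrees with the $A$-action, $1_\Lambda$ also generates $\Lambda$ as a left $A^{op}\#H$-module.

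The only delicate point is the associativity check, specifically identifying the term $i(\chi(\cdot))a'$ with the expression appearing in the character axiom. Everything else is a routine unwinding of the definitions.
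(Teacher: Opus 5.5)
Your proof is correct and matches the paper's argument: the $H$-compatibility computation $h(\lambda\leftharpoonup a)=\chi((h_1.i(\lambda))(h_2.a))=(h_1\lambda)\leftharpoonup(h_2.a)$ and the cyclicity check $1_\Lambda\leftharpoonup i(\lambda)=\chi(1_A)\lambda=\lambda$ are the same as in the paper. The one difference is that you verify the right $A$-module axioms directly, using right $\Lambda$-linearity of $\chi$, $\chi(1_A)=1_\Lambda$ and the character axiom applied to $b=i(\lambda)a$; the paper does not prove this step and instead cites the literature for it.
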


\begin{proof} By \cite{CaeVerShu} or \cite{CG}, $\Lambda$ is a right $A$-module. We have
$$ \begin{array}{rcl} h( \lambda \leftharpoonup a)&=& h\chi(\lambda a)=\chi(h(\lambda a))=\chi(h(i(\lambda)a))\\
&=&\chi((h_1.i(\lambda))(h_2.a))=\chi(i(h_1.\lambda)(h_2.a))\\
&=&\chi((h_1.\lambda)(h_2.a))=(h_1.\lambda) \leftharpoonup (h_2.a),\end{array}$$
therefore $\Lambda$ is a left-right $(H,A)$-module. 
It follows that $\Lambda$ is a left $A^{op}\#H$-module. Let $\lambda \in \Lambda$. We have
$$1_{\Lambda} \leftharpoonup (1_A \lambda)=\chi(1_A1_{\Lambda}\lambda)=\chi(1_A)1_{\Lambda}\lambda=\lambda.$$
So $\Lambda$ is generated by $1_{\Lambda}$ as a right $A$-module; or equivalently, $\Lambda$ is generated by $1_{\Lambda}$ as a left $A^{op}$-module. Since $A^{op}$ is a subalgebra of $A^{op} \# H$ and $1_A\lambda$ is an element of $A=A^{op}$, we conclude that $\Lambda$ is generated by $1_{\Lambda}$ over $A^{op} \# H$. 
\end{proof}

Then we get necessary and sufficient conditions for projectivity over the endomorphism ring $$_HEnd^{A}(\Lambda)={}_HEnd_A({\Lambda})={}_{A^{op} \# H}End(\Lambda),$$ where $\Lambda$ and $A$ are $H$-algebras and $A$ is an $(H, \Lambda)$-ring with a right $H$-grouplike character. 

Let $H$ be a Hopf algebra with a bijective antipode. Let $\mathcal C$ be an $(H,A)$-coring with a fixed $H$-grouplike element $x$. The left dual $^{\prime}{\mathcal C}$ of $\mathcal C$  is an $(H,A)$-ring with a right $H$-grouplike character $\chi: {}^{\prime}{\mathcal C} \rightarrow A$; $f \mapsto \chi(f)=f(x)$: clearly, $\chi$ is right $A$-linear, $H$-linear, and for all $f,g \in {}^{\prime}{\mathcal C}$, we have
$$\begin{array}{rcl} \chi(\chi(f)g)&=& (\chi(f)g)(x)= g(x \blacktriangleleft \chi(f))\\
&=& g(x \blacktriangleleft f(x))= (f \# g)(x) \\
&=& \chi(f \# g).\end{array}$$
From Lemma 4.5, we know that $A$ is a cyclic $(^{\prime}{\mathcal C})^{op} \# H$-module. If $\mathcal C$ is projective as left $A$-module, Theorem 3.2 gives necessary and sufficient conditions for projectivity over the endomorphism ring $$_HEnd^{\mathcal C}(A)={}_HEnd_{^{\prime}{\mathcal C}}(A)={}_{{^{\prime}{\mathcal C}}^{op} \# H}End(A).$$

\subsection{$k$ is a trivial bialgebra}

Let us consider $k$ as a trivial bialgebra, and let $A$ be a $k$-algebra. Every right $A$-module is a left-right $(k,A)$-module. In the same way, every $A$-bimodule is a left $(k,A)$-bimodule. It follows that an $A$-coring $\mathcal C$ is nothing else a $(k,A)$-coring $\mathcal C$ and a right $\mathcal C$-comodule is nothing else a right $(k,{\mathcal C})$-comodule. If $\mathcal C$ is flat as left $A$-module, then Theorems 3.2 and 3.3 give necessary and sufficient conditions for projectivity and flatness over the endomorphism ring $_kEnd^{\mathcal C }(\Lambda)=End^{\mathcal C}(\Lambda)$, where $\Lambda$ is a right $\mathcal C$-comodule finitely generated (finitely presented) in ${\mathcal M}^{\mathcal C}$. So we recover the results of \cite{TG}.

\subsection{A coalgebra as an $(H,k)$-coring} 

Let $H$ be a bialgebra over $k$. Let $C$ be a $k$-coalgebra which is a left $H$-module such that $\Delta_C$ and $\epsilon_C$ are homomorphisms of left $H$-modules, that is, a coalgebra in the monoidal category of left $H$-modules. Then $C$ is an $(H,k)$-coring, where $k$ is considered as a trivial left $H$-module algebra. If $C$ is flat as left $k$-module, Theorems 3.2 and 3.3 give necessary and sufficient conditions for projectivity and flatness over the endomorphism ring $_{H}End^{C}(\Lambda)$, where $\Lambda$ is a right $(H,C)$-comodule that is a finitely generated (finitely presented) object in $_H{\mathcal M}^C$.  

Note that if $H$ is a Hopf algebra, the category of right $(H,C)$-comodules is the usual category of left-right (H,C)-Hopf modules \cite{Doi}.

\subsection{ Noncommutative Poisson algebras}
Let $k$ be a field, $A$ a noncommutative Poisson algebra and $U(A)$ the universal enveloping algebra of $A$. Then $A$ is a left $U(A)$-module algebra and a right $U(A)$-module algebra since the category of left $U(A)$-modules is isomorphic to the category of right $U(A)$-modules. We can define the notions of a $(U(A),A)$-coring $\mathcal C$ and of a right $(U(A),{\mathcal C})$-comodule. A vector space is a left $(U(A),A)$-bimodule (resp. a left-right $(U(A),A)$-module) if and only if it is a quasi-Poisson $A$-module (resp. a right quasi-Poisson $A$-module). A quasi-Poisson $A$-coring is an $A$-coring $\mathcal C$ which is a $U(A)$-module such that the $A$-biaction turns $\mathcal C$ into a quasi-Poisson $A$-module and the coproduct $\Delta_{\mathcal C}$ and the counit $\epsilon_{\mathcal C}$ are right $U(A)$-linear. We deduce that a quasi-Poisson $A$-coring is nothing else a $(U(A),A)$-coring. Let $\mathcal C$ be a quasi-Poisson $A$-coring. A vector space $M$ is a right quasi-Poisson $\mathcal C$-comodule if $M$ is a right $\mathcal C$-comodule and a right $U(A)$-module such that with the right $A$-module structure, $M$ is a right quasi-Poisson $A$-module and the comodule structure map is $U(A)$-linear. We deduce that a right quasi-Poisson $\mathcal C$-comodule is nothing else than a right $(U(A),{\mathcal C})$-comodule. We conclude that the category $_{U(A)}{\mathcal M}^{\mathcal C}$ of right $(U(A),{\mathcal C})$-comodules over a $(U(A),A)$-coring $\mathcal C$ is isomorphic to the category ${\mathcal M}^{{\mathcal P},{\mathcal C}}$ of right quasi-Poisson comodules over the quasi-Poisson $A$-coring $\mathcal C$, since the two categories have the same morphisms which are the $U(A)$-linear and right $\mathcal C$-colinear maps. Let $\mathcal C$ be flat as left $A$-module. Then Theorems 3.2 and 3.3 give necessary and sufficient conditions for an object of ${\mathcal M}^{{\mathcal P},{\mathcal C}}$ to be projective (flat) over the endomorphism ring $End^{ {\mathcal P},{\mathcal C}}(\Lambda)$, where $\Lambda$ is a right quasi-Poisson $\mathcal C$-comodule that is a finitely generated (finitely presented) object in ${\mathcal M}^{{\mathcal P},{\mathcal C}}$. Note that if $\Lambda$ is finitely generated (finitely presented) as a left ${(^{\prime}{\mathcal C})}^{op} \#U(A)$-module, then it is a finitely generated (finitely presented) object in ${\mathcal M}^{{\mathcal P},{\mathcal C}}$. Thus we get a generalization of the results in \cite{Guedenon}. 

\subsection{$H$ is the group algebra of a group}
Let $G$ be a group acting by automorphismss on an associative $k$-algebra $A$. Then $A$ is a left $G$-module algebra, or equivalently, a left $kG$-module algebra, where $kG$ is the group algebra of $G$. We can define the notions of a $(kG,A)$-coring $\mathcal C$ which we can call a $(G,A)$-coring and of a right $(kG,{\mathcal C})$-comodule which we can call a right $(G,{\mathcal C})$-comodule. Thus we can consider the category $_{G}{\mathcal M}^{\mathcal C}$ of right $(G,{\mathcal C})$-comodules. Let $\mathcal C$ be flat as left $A$-module. Then Theorems 3.2 and 3.3 give necessary and sufficient conditions for projectivity and flatness over the endomorphism ring $_{G}End^{\mathcal C}(\Lambda)$, where $\Lambda$ is a right $(G,{\mathcal C})$-comodule that is a finitely generated (finitely presented) object in $_{kG}{\mathcal M}^{\mathcal C}$ (which is the case if $\Lambda$ is finitely generated (finitely presented) left $(^{\prime}{\mathcal C})^{op} \# kG$-module).

\subsection{$H$ is the enveloping algebra of a Lie algebra}
Let $\mathcal G$ be Lie algebra over a field $k$ acting by derivations on an associative algebra $A$. Then $A$ is a left $\mathcal G$-module algebra, or equivalently, a left $U({\mathcal G})$-module algebra, where $U({\mathcal G})$ is the universal enveloping algebra of $\mathcal G$. We can define the notions of a $(U({\mathcal G}),A)$-coring $\mathcal C$ (a $({\mathcal G},A)$-coring) and of a right $(U({\mathcal G}),{\mathcal C})$-comodule (a right $({\mathcal G},{\mathcal C})$-comodule). Thus we can consider the category $_{\mathcal G}{\mathcal M}^{\mathcal C}$ of right $({\mathcal G},{\mathcal C})$-comodules. Let $\mathcal C$ be flat as left $A$-module. Then Theorems 3.2 and 3.3 give necessary and sufficient conditions for projectivity and flatness over the endomorphism ring $_{\mathcal G}End^{\mathcal C}(\Lambda)$, where $\Lambda$ is a right $({\mathcal G},{\mathcal C})$-comodule that is a finitely generated (finitely presented) object in $_{U({\mathcal G})}{\mathcal M}^{\mathcal C}$ (which is the case if $\Lambda$ is finitely generated (finitely presented) left $(^{\prime}{\mathcal C})^{op} \# U({\mathcal G})$-module). The Lie algebra ${\mathcal G}$ could be the Lie algebra of vector fields on a smooth manifold $M$, and $A$ the commutative algebra $C^{\infty}(M)$ of smooth functions on $M$, where the action is the Lie derivative on functions. We recall that ${\mathcal G}$ and the one-forms $\Omega^1$ are left $(U({\mathcal G}),C^{\infty}(M))$-bimodules \cite[Example 2.4]{Schenkel 2} and \cite{AschSch, Schenkel 1}. For an $H$-module algebra $A$, left $(H,A)$-bimodules and their deformations play a central role in the study of the differential geometry of noncommutative manifolds \cite{Schenkel 2, AschSch, Schenkel 1}. 

\subsection{The category $_H{\mathcal M}_A^{H}$ of two-sided ($A,H$)-Hopf modules}

Let $k$ be a commutative ring and $H$ be a Hopf $k$-algebra with a bijective antipode. An algebra $A$ is a right {\it $H$-comodule algebra} if $A$ is a right $H$-comodule ($\rho_A(a)=a_0 \otimes a_1$) satisfying  
$$(aa')_0 \otimes (aa')_1=(a_0a'_0) \otimes (a_1a'_1) \quad \hbox{and} \quad (1_A)_0 \otimes (1_A)_1=1_A \otimes 1_H \quad \forall a, a' \in A.$$ 
Let $A$ be a right $H$-comodule algebra. A $k$-module $M$ is a right $(A,H)$-Hopf module if $M$ is a right $A$-module and a right $H$-comodule ($\rho_M(m)=m_0 \otimes m_1$) such that 
$$(ma)_0 \otimes (ma)_1=(m_0a_0) \otimes (m_1a_1) \quad \forall a \in A, m \in M.$$ 
It is easy to see that $A$ is a right $(A,H)$-Hopf module whenever $A$ is a right $H$-comodule algebra. For further information on $H$-coactions, we refer to \cite{Montgomery, caeMilZhu, brWi}. A $k$-module $M$ is a left-right $H$-Hopf module if $M$ is a left $H$-module and a right $H$-comodule such that 
$$(hm)_0 \otimes (hm)_1=(h_1m_0) \otimes( h_2m_1) \quad \forall h \in H, m \in M.$$ 
An $(H,A)$-bimodule is a $k$-module $M$ which is a left $H$-module and a right $A$-module such that $$(hm).a=h(m.a) \quad \forall h \in H, a \in A, m \in M.$$
A two-sided $(A,H)$-Hopf module is an $(H,A)$-bimodule which is a right $(A,H)$-Hopf module and a left-right $H$-Hopf module. Following \cite{Menini}, we will denote by $_H{\mathcal M}_A^{H}$ the category of two-sided $(A,H)$-Hopf modules with morphisms the left $H$-linear, right $A$-linear and right $H$-colinear maps.

Let $A$ be a trivial left $H$-module algebra. In the following proposition, $A \otimes_kH \otimes_k H$ is equipped with its structure of an $H$-module given by the diagonal $H$-action, that is, 
$$h''(a \otimes h \otimes h')=a \otimes (h''_1h) \otimes (h''_2h').$$

\begin{proposition} Let $A$ be a trivial left $H$-module algebra and a right $H$-comodule algebra. Set ${\mathcal C}=A \otimes_kH$. Then 

(1) ${\mathcal C}$ is an $(H,A)$-coring with grouplike element $1_A \otimes 1_H$: the $H$-action is defined by $h'.(a \otimes h)=a \otimes (h'h)$, the $A$-bimodule action is given by $a' \blacktriangleright(a \otimes h) \blacktriangleleft a''=(a'aa''_0) \otimes (ha''_1)$, the comultiplication is $\Delta_{\mathcal C}=id_A \otimes \Delta_H$ and the counit is $\epsilon_{\mathcal C}=id_A \otimes \epsilon_H$;

(2) the category $_H{\mathcal M}^{\mathcal C}$ is isomorphic to the category $_H{\mathcal M}_A^{H}$.

(3)  the category $_H{\mathcal M}^{H \otimes H}$ is isomorphic to the category $_H{\mathcal M}_H^{H}$ of two-sided Hopf modules, where we consider $H$ as an $H$-algebra with a trivial $H$-action.

\end{proposition}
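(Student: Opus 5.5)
Part (1) rests on the standard fact that for a right $H$-comodule algebra $A$ the $k$-module $A\otimes_k H$ is an $A$-coring: left action by multiplication in the first factor, right action $(a\otimes h)\blacktriangleleft a''=aa''_0\otimes ha''_1$ (a right action because $\rho_A$ is multiplicative and unital), $\Delta_{\mathcal C}=id_A\otimes\Delta_H$, and $\epsilon_{\mathcal C}=id_A\otimes\epsilon_H$. Here we use the identification ${\mathcal C}\otimes_A{\mathcal C}\cong A\otimes_k H\otimes_k H$, given by $(a\otimes h)\otimes_A(1\otimes h')\mapsto a\otimes h\otimes h'$. I will recall this quickly, checking the bimodule property and coassociativity. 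Right $A$-linearity of $\Delta_{\mathcal C}$ uses $\Delta_H(ha''_1)=h_1a''_1\otimes h_2a''_2$ together with coassociativity of $\rho_A$. Right $A$-linearity of $\epsilon_{\mathcal C}$ uses $a''_0\epsilon_H(a''_1)=a''$.

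Next I verify the $H$-structure. Since $A$ is a trivial $H$-module algebra, $h.a=\epsilon_H(h)a$. Hence condition (1) for the left action reads $h'(a'\blacktriangleright c)=a'\blacktriangleright(h'c)$, which is clear. Condition (2) reads $h'(c\blacktriangleleft a'')=(c\blacktriangleleft a'')$ computed with $h'$ acting on the left of the $H$-factor, namely $a a''_0\otimes h'ha''_1$, so it holds by associativity of $H$. Also $\Delta_{\mathcal C}(h'(a\otimes h))=a\otimes h'_1h_1\otimes h'_2h_2$, which is the diagonal action on $A\otimes H\otimes H$ as stated before the proposition. Finally, $\epsilon_{\mathcal C}(h'c)=\epsilon_H(h')\epsilon_{\mathcal C}(c)=h'.\epsilon_{\mathcal C}(c)$. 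For the grouplike claim, $\Delta_{\mathcal C}(1\otimes 1)=(1\otimes1)\otimes_A(1\otimes1)$ and $\epsilon_{\mathcal C}(1\otimes 1)=1_A$. Strictly speaking, $h.(1\otimes1)=1\otimes h\neq\epsilon_H(h)(1\otimes 1)$, so $1_A\otimes1_H$ is grouplike but not $H$-invariant. I therefore read ``grouplike element'' in the ordinary, not the $H$-, sense, and I will remark on this in the proof.

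For part (2), I use the classical correspondence between right ${\mathcal C}$-comodules and Hopf modules. Given $M\in{}_H{\mathcal M}^{\mathcal C}$, there is a natural isomorphism $M\otimes_A{\mathcal C}\cong M\otimes_k H$, given by $m\otimes_A(a\otimes h)\mapsto ma\otimes h$. Transporting $\rho_{M,{\mathcal C}}$ along it yields a map $\rho_M:M\to M\otimes H$. Coassociativity and counitality of $\rho_{M,{\mathcal C}}$ give a right $H$-comodule structure. Right $A$-linearity of $\rho_{M,{\mathcal C}}$, with the right action on $M\otimes_A{\mathcal C}$ becoming $(m\otimes h)a=ma_0\otimes ha_1$, is exactly the $(A,H)$-Hopf module condition. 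Next, $H$-linearity of $\rho_{M,{\mathcal C}}$ for the diagonal action becomes $(hm)_0\otimes(hm)_1=h_1m_0\otimes h_2m_1$, the left-right Hopf condition. Finally, compatibility (2), $h(ma)=(h_1m)(\epsilon_H(h_2)a)=(hm)a$, says precisely that $M$ is an $(H,A)$-bimodule. Conversely, each of these steps reverses. Morphisms coincide, since colinearity for ${\mathcal C}$ translates to $H$-colinearity under the same isomorphism, and both notions require $H$-linearity and $A$-linearity. The identity-on-objects functors are thus mutually inverse.

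The main obstacle is keeping the identification $M\otimes_A(A\otimes H)\cong M\otimes H$ compatible with all structures at once. In particular, the right $A$-action on $M\otimes_A{\mathcal C}$ comes from $\blacktriangleleft$, and one must check carefully that the transported coaction is a comodule map exactly when the Hopf-module condition holds. Part (3) is then the special case $A=H$. Here $H$ is a right $H$-comodule algebra via $\Delta_H$ and a trivial $H$-module algebra via $\epsilon_H$, and ${\mathcal C}=H\otimes H$, so (2) gives ${}_H{\mathcal M}^{H\otimes H}\cong{}_H{\mathcal M}^H_H$.
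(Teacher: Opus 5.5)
Your proposal is correct and follows the same route as the paper. You take the $A$-coring structure on $A\otimes_k H$ from the standard construction (Brzezi\'nski--Wisbauer 33.2). You reduce the $H$-compatibilities to associativity of $H$ using the trivial action on $A$. You obtain (2) from the classical isomorphism ${\mathcal M}^{\mathcal C}\cong{\mathcal M}^H_A$, translating the extra $H$-conditions across $M\otimes_A{\mathcal C}\cong M\otimes_k H$ where the paper only says ``same objects and same morphisms,'' and you get (3) as the case $A=H$. Your observation that $1_A\otimes 1_H$ is grouplike only in the ordinary sense is also correct, since $h.(1_A\otimes 1_H)=1_A\otimes h$, so it is not an $H$-grouplike element in the sense of Definition 4.1; the paper's proof does not address this point.
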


\begin{proof} (1) By \cite[33.2]{brWi}, $\mathcal C$ is an $A$-coring with the given $A$-bimodule structure. We have
$$\begin{array}{rcl} h'.((a \otimes h) \blacktriangleleft a')&=& h'(aa'_0 \otimes ha'_1)\\
&=&(aa'_0 \otimes h'ha'_1)=(a\otimes h'h)a'\\
&=&(a\otimes h'_1h) \epsilon_H(h'_2)a'\\
&=&(h'_1.(a \otimes h)) \blacktriangleleft (h'_2.a'),\end{array}$$ and $A \otimes H$ is a left-right $(H,A)$-module. Clearly, $A \otimes H$ is a left $(H,A)$-module. It follows that $A \otimes H$ is a left $(H,A)$-bimodule. It is also clear that $\Delta_{\mathcal C}$ and $\epsilon_{\mathcal C}$ are $H$-linear.

(2) the category ${\mathcal M}^{\mathcal C}$ of right $\mathcal C$-comodules is isomorphic to the category ${\mathcal M}_A^{H}$ of right $(A,H)$-Hopf modules \cite[33.2]{brWi}. We deduce that the category $_H{\mathcal M}^{\mathcal C}$ is isomorphic to the category $_H{\mathcal M}_A^{H}$, since they have the same objects and the same morphisms. The result follows.

(3) Take $A=H$.
\end{proof}

Let $k$ be a field. Then $A \otimes H$ is left $A$-free. Theorems 3.2 and 3.3 give necessary and sufficient conditions for projectivity and flatness over the endomorphism ring $_HEnd_A^{H}(\Lambda)={}_HEnd^{A \otimes H}(\Lambda)$ of a right two-sided $(A,H)$-Hopf module $\Lambda$ if $\Lambda$ is finitely generated (finitely presented) in $_H{\mathcal M}_A^{H}$. Note that $_HEnd_A^{H}(A)$ is the subring $A^{HcoH}$ of $H$-invariants and $H$-coinvariants of $A$.

\subsection{The category $_{{\mathcal G}}{\mathcal M}_A^H$ of $(A,{\mathcal G},H)$-comodules}
In this subsection, $k$ is a field. We refer to \cite{Gordienko} and \cite{GUED} for more details on $(A,{\mathcal G},H)$-comodules.

\begin{definition} Let $H$ be a Hopf algebra, and $\mathcal G$ a Lie algebra. A vector space $A$ is a $({\mathcal G},H)$-comodule algebra if $A$ a ${\mathcal G}$-algebra (the ${\mathcal G}$-action is denoted by $\diamond$) and an $H$-comodule algebra such that 
$$(X \diamond a)_0 \otimes (X \diamond a)_1=(X \diamond a_0) \otimes a_1; \forall X \in {\mathcal G}, a \in A \eqno(3).$$
\end{definition}

\begin{definition} Let $H$ be a Hopf algebra, ${\mathcal G}$ a Lie algebra and $A$ a $({\mathcal G},H)$-comodule algebra. A vector space $M$ is an $(A,{\mathcal G},H)$-comodule if $M$ is a right $A$-module, a ${\mathcal G}$-module and an $H$-comodule such that the following three conditions are satisfied:
$$(ma)_0 \otimes (ma)_1=(m_0a_0) \otimes (m_1a_1); \forall a \in A, m \in M \eqno(4);$$
$$X \diamond(ma)=(X \diamond m)a+m(X \diamond a); \forall a \in A, m \in M, X \in {\mathcal G} \eqno(5)$$
and
$$(X \diamond m)_0 \otimes (X \diamond m)_1=(X \diamond m_0) \otimes m_1; \forall m \in M, X \in {\mathcal G} \eqno(6).$$
\end{definition}

Let us denote by $_{{\mathcal G}}{\mathcal M}_A^H$ the category of $(A,{\mathcal G},H)$-comodules with morphisms the right $A$-linear, $\mathcal G$-linear, $H$-colinear maps. Since $A$ is a ${\mathcal G}$-algebra, we can define the notion of a $({\mathcal G},A)$-coring.

\begin{proposition}  Let $H$ be a  Hopf algebra, $\mathcal G$ a Lie algebra and $A$ a $({\mathcal G},H)$-comodule algebra.

(1) ${\mathcal C}=A \otimes H$ is a $({\mathcal G},A)$-coring: the ${\mathcal G}$-module and the $A$-coring structures are given by
$$X \diamond(a \otimes h)=(X \diamond a) \otimes h; \forall X \in {\mathcal G}, a \in A, h \in H \quad \hbox{and}$$
$$a' \blacktriangleright (a \otimes h) \blacktriangleleft a''=(a'aa''_0) \otimes (ha''_1); \forall a,a',a'' \in A, h \in H.$$
 
(2) The category $_{\mathcal G}{\mathcal M}^{\mathcal C}$ of right $({\mathcal G}, {\mathcal C})$-comodules and the category 
$_{{\mathcal G}}{\mathcal M}_A^H$ of $(A,{\mathcal G},H)$-comodules are isomorphic. 

\end{proposition}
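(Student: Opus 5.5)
We would first reduce (1) to the known non-equivariant statement \cite[33.2]{brWi}: for a right $H$-comodule algebra $A$, ${\mathcal C}=A\otimes H$ is an $A$-coring with the given bimodule structure, $\Delta_{\mathcal C}=id_A\otimes\Delta_H$ and $\epsilon_{\mathcal C}=id_A\otimes\epsilon_H$. Here $H={U}({\mathcal G})$ acts through the Lie action, with $X$ primitive, so $\Delta(X)=X\otimes 1+1\otimes X$ and $\epsilon(X)=0$. What remains is to check that ${\mathcal C}$ is a left $({\mathcal G},A)$-bimodule and that $\Delta_{\mathcal C}$ and $\epsilon_{\mathcal C}$ are ${\mathcal G}$-linear. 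It suffices to verify these identities on generators $X\in{\mathcal G}$, since they then extend to $U({\mathcal G})$.

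For the left action, $X\diamond(a'\blacktriangleright(a\otimes h))=(X\diamond(a'a))\otimes h=(X\diamond a')a\otimes h+a'(X\diamond a)\otimes h$, which is exactly equation (1) for a primitive element. For the right action we compute
$$X\diamond((a\otimes h)\blacktriangleleft a'')=(X\diamond(aa''_0))\otimes ha''_1=(X\diamond a)a''_0\otimes ha''_1+a(X\diamond a''_0)\otimes ha''_1.$$
By (3), $(X\diamond a'')_0\otimes(X\diamond a'')_1=(X\diamond a''_0)\otimes a''_1$, so the second term equals $(a\otimes h)\blacktriangleleft(X\diamond a'')$. This gives (2). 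Finally, $\Delta_{\mathcal C}(X\diamond(a\otimes h))=(X\diamond a)\otimes h_1\otimes_A 1_A\otimes h_2$, under the identification ${\mathcal C}\otimes_A{\mathcal C}\cong A\otimes H\otimes H$. The diagonal action gives $X\diamond(a\otimes h_1)\otimes_A(1\otimes h_2)+(a\otimes h_1)\otimes_A X\diamond(1\otimes h_2)$, and the last term vanishes because $X\diamond 1_A=0$. So $\Delta_{\mathcal C}$ is ${\mathcal G}$-linear, and $\epsilon_{\mathcal C}$ is clearly ${\mathcal G}$-linear.

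For (2), we use the isomorphism ${\mathcal M}^{\mathcal C}\cong{\mathcal M}_A^H$ of \cite[33.2]{brWi}. It sends $\rho_M:M\to M\otimes_A(A\otimes H)\cong M\otimes H$ to the $H$-coaction, and the Hopf module compatibility (4) corresponds to right $A$-linearity of $\rho_M$. The objects of both categories are right $A$-modules with ${\mathcal G}$-action satisfying (5), which is condition (2) for a primitive $X$, that is, a left-right $({\mathcal G},A)$-module. It remains to compare the ${\mathcal G}$-linearity of $\rho_M$ with (6). Under the identification, the diagonal action on $M\otimes_A{\mathcal C}$ becomes
$$X\diamond(m_0\otimes_A(1\otimes m_1))=(X\diamond m_0)\otimes m_1+m_0\otimes_A(X\diamond 1_A\otimes m_1)=(X\diamond m_0)\otimes m_1,$$
so ${\mathcal G}$-linearity of $\rho_M$ is exactly (6). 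Morphisms in both categories are right $A$-linear, ${\mathcal G}$-linear and colinear maps. Hence the identity on objects and morphisms gives the isomorphism of categories.

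The main obstacle is the bookkeeping in the identification ${\mathcal C}\otimes_A{\mathcal C}\cong A\otimes H\otimes H$ and $M\otimes_A{\mathcal C}\cong M\otimes H$, where the $H$-coaction of $A$ is hidden in the right $A$-action. We must make sure that the diagonal ${\mathcal G}$-action transports correctly under these isomorphisms. Compatibility (3) is what makes the right $A$-action on ${\mathcal C}$ ${\mathcal G}$-equivariant, and $X\diamond 1_A=0$ is what kills the extra terms.
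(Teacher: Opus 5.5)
Your proof is correct and follows essentially the same route as the paper. In (1) the key step is the compatibility of the right $A$-action with the ${\mathcal G}$-action via condition (3). In (2) you transport everything through the known identification ${\mathcal M}^{\mathcal C}\cong{\mathcal M}_A^H$, matching (5) with the left-right module condition, and (4), (6) with right $A$-linearity and ${\mathcal G}$-linearity of the coaction. You are in fact more explicit than the paper about the ${\mathcal G}$-linearity of $\Delta_{\mathcal C}$ and $\rho_M$ (using $X\diamond 1_A=0$). One notational slip: you write ``$H=U({\mathcal G})$'', but the acting bialgebra is $U({\mathcal G})$, while $H$ stays the coacting Hopf algebra, as in the rest of your computations.
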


\begin{proof} (1) Clearly, ${\mathcal C}$ is a left $A \#U({\mathcal G})$-module and a right $A$-module for the given $\mathcal G$-action and the $A$-biaction. For all $X \in {\mathcal G}$, $a,a' \in A$ and $h \in H$, we have
$$\begin{array}{rcl} X \diamond ((a \otimes h) \blacktriangleleft a')&=& X \diamond((aa'_0) \otimes (ha'_1))\\
&=&[(X \diamond a)a'_0+a(X \diamond a'_0)] \otimes (ha'_1)\\
&=&[(X \diamond a)a'_0] \otimes(ha'_1)+[a(X \diamond a'_0)] \otimes (ha'_1)\\
&=&[(X \diamond a) \otimes h] \blacktriangleleft a'+a(X \diamond a')_0 \otimes h(X \diamond a')_1\\
&=&[(X \diamond a) \otimes h] \blacktriangleleft a'+[(a \otimes h) \blacktriangleleft (X \diamond a')],\end{array}$$
this shows that $\mathcal C$ is a left-right $({\mathcal G},A)$-module. It follows that $\mathcal C$ is a left $({\mathcal G},A)$-bimodule. It is well known that $A \otimes H$ is an $A$-coring for the given $A$-biaction with comultiplication
$$\Delta_{\mathcal C}(a \otimes h)=(a \otimes h_1) \otimes_A(1_A \otimes h_2)=a \otimes h_1 \otimes h_2; \forall a \in A, h \in H,$$
and counit
$$\epsilon_{\mathcal C}(a \otimes h)=\epsilon_H(h)a; \forall a \in A, h \in H.$$
It is easy to see that $\Delta_{\mathcal C}$ and $\epsilon_{\mathcal C}$ are $\mathcal G$-linear maps.

(2)  If $M$ is an object of $_{\mathcal G}{\mathcal M}^{\mathcal C}$, then $M$ is a $\mathcal G$-module, a right $(A \otimes H)$-comodule in such a way that it is a left-right $({\mathcal G},A)$-module, and the comodule structure map $\rho_{M,{\mathcal C}}: M \rightarrow M \otimes_A(A \otimes H); m \mapsto m_{(0)} \otimes_A m_{(1)}$ is $\mathcal G$-linear and $A$-linear. Since $M$ is a right $\mathcal C$-comodule, it is well known that it is a right $(A,H)$-Hopf module. The $H$-comodule structure $\rho_{M,H}$ is the composition of $\rho_{M,{\mathcal C}}$ followed by the canonical isomorphism $M \otimes_A(A \otimes H) \simeq M \otimes H$. The fact that $M$ is a left-right $({\mathcal G},A)$-module means that the relation (5) is satisfied. The fact that $\rho_{M,{\mathcal C}}$ is $\mathcal G$-linear and right $A$-linear means that the relations (4) and (6) are satisfied. It follows that $M$ is an object of $_{{\mathcal G}}{\mathcal M}_A^H$. Conversely, if $M$ is an object of $_{{\mathcal G}}{\mathcal M}_A^H$, then $M$ is a $\mathcal G$-module, a right $(A,H)$-Hopf module, and it satisfies the relations (4), (5) and (6), that is, $M$ is a $\mathcal G$-module, a right $\mathcal C$-comodule, and the relations (4), (5) and (6) are satisfied. The right $\mathcal C$-comodule structure is defined by $m  \mapsto m_{(0)} \otimes_A m_{(1)}=m_0 \otimes_A(1_A \otimes m_1)$. So $M$ is a $\mathcal G$-module, a right $\mathcal C$-comodule, a left-right $({\mathcal G},A)$-module and $\rho_{M,{\mathcal C}}$ is a homomorphism of $\mathcal G$-modules. Therefore $M$ is an object of $_{\mathcal G}{\mathcal M}^{\mathcal C}$.

The morphisms of $_{\mathcal G}{\mathcal M}^{\mathcal C}$ are the left $\mathcal G$-linear, right $\mathcal C$-comodule maps, that is, the $\mathcal G$-linear maps which are also homomorphisms of $(A,H)$-Hopf modules. It follows that the categories 
$_{\mathcal G}{\mathcal M}^{\mathcal C}$ and $_{{\mathcal G}}{\mathcal M}_A^H$ have the same morphisms.
\end{proof}

Note that $A \otimes H$ is left $A$-free since $k$ is a field. Then Theorems 3.2 and 3.3 give necessary and sufficient conditions for projectivity and flatness over the endomorphism ring $_{{\mathcal G}}End_A^{H}(\Lambda)={}_{\mathcal G }End^{A \otimes H}(\Lambda)$ of an $(A,{\mathcal G},H)$-comodule $\Lambda$, where $\Lambda$ is finitely generated (finitely presented) in $_{{\mathcal G}}{\mathcal M}_A^H$. Note that $_{{\mathcal G}}End_A^{H}(A)=A^{{\mathcal G},H}$ the subring of $\mathcal G$-invariants and $H$-coinvariants of $A$.

\subsection{Deformation of an $(H,A)$-coring}

Let $k$ be a field and $H$ a Hopf algebra. A twist ${\mathcal F}$ is an element ${\mathcal F} \in H \otimes H$ that is invertible and that satisfies two compatibility conditions \cite[Definition 12.1]{Schenkel 1} or \cite[Definition 2.5]{Schenkel 2}. By \cite[Theorem 12.2]{Schenkel 1} or \cite[Theorems 2.6 and 2.8]{Schenkel 2}, the twist of a Hopf algebra $H$ leads to a new Hopf algebra $H^{\mathcal F}$. All left $H$-modules $M$ are deformed to left $H^{\mathcal F}$-modules $M_{\star}$. All left $H$-module algebras $A$ are deformed to left $H^{\mathcal F}$-module algebras $A_{\star}$ \cite[Theorem 12.3]{Schenkel 1}, all left $(H,A)$-modules, left-right $(H,A)$-modules and left $(H,A)$-bimodules $M$ are deformed to left $(H^{\mathcal F},A_{\star})$-modules, left-right $(H^{\mathcal F},A_ {\star})$-modules and left $(H^{\mathcal F},A_{\star})$-bimodules $M_{\star}$ \cite[Theorem 12.4]{Schenkel 1}. Let $A$ be an $H$-module algebra and $\mathcal C$ an $(H,A)$-coring. Then $\mathcal C$ and ${\mathcal C} \otimes_A{\mathcal C}$ are left $(H,A)$-bimodules. Thus we get the left $(H^{\mathcal F},A_{\star})$-bimodules ${\mathcal C}_{\star}$, $({\mathcal C} \otimes_A{\mathcal C})_{\star}$ and ${\mathcal C}_{\star} \otimes_{A_{\star}} {\mathcal C}_{\star}$. The coproduct $\Delta_{\mathcal C}$ of $\mathcal C$ is a left $(H,A)$-bimodule homomorphism from $\mathcal C$ to ${\mathcal C} \otimes_A{\mathcal C}$. Its deformation $D_{\mathcal F}(\Delta_{\mathcal C})$ is a left $(H^{\mathcal F},A_{\star})$-bimodule homomorphism from ${\mathcal C}_{\star}$ to $({\mathcal C} \otimes_A{\mathcal C})_{\star}$ \cite[Theorems 3.5 and 4.7]{AschSch} or \cite[Theorem 13.2]{Schenkel 1}. By \cite[Lemma 5.19]{AschSch}, the left $(H^{\mathcal F},A_{\star})$-bimodules $({\mathcal C} \otimes_A{\mathcal C})_{\star}$ and ${\mathcal C}_{\star} \otimes_{A_{\star}} {\mathcal C}_{\star}$ are isomorphic. Therefore we get a left $(H^{\mathcal F},A_{\star})$-bimodule homomorphism from ${\mathcal C}_{\star}$ to 
 ${\mathcal C}_{\star} \otimes_{A_{\star}} {\mathcal C}_{\star}$. The counit $\epsilon_{\mathcal C}$ is a left $(H,A)$-bimodule homomorphism from $\mathcal C$ to $A$. Its deformation $D_{\mathcal F}(\epsilon_{\mathcal C})$ is a left $(H^{\mathcal F},A_{\star})$-bimodule homomorphism from ${\mathcal C}_{\star}$ to $A_{\star}$. It follows that ${\mathcal C}_{\star}$ is an $(H^{\mathcal F},A_{\star})$-coring which we will consider as the deformation of $\mathcal C$. We can then define right $(H^{\mathcal F},{\mathcal C}_{\star})$-comodules. Let $H$ be a Hopf algebra with a bijective antipode. By Lemma 2.9, the left dual $^{\prime}{\mathcal C}$ is a left $H$-module algebra and a left $(H,A)$-bimodule. So its deformation $(^{\prime}{\mathcal C})_{\star}$ is a left $H^{\mathcal F}$-module algebra and a left $(H^{\mathcal F},A_{\star})$-bimodule. By \cite[Example 4.9]{AschSch} or \cite[Example 3.3]{Schenkel 2}, $(^{\prime}{\mathcal C})_{\star}$ and $^{\prime}({{\mathcal C}_{\star}})$ are isomorphic as left $(H^{\mathcal F},A_{\star})$-bimodules. They are also isomorphic as left $H^{\mathcal F}$-algebras. Then Theorems 3.2 and 3.3 give necessary and sufficient conditions for projectivity and flatness over the endomorphism ring $_{H^{\mathcal F}}End^{{\mathcal C}_{\star}}(\Lambda)$, where $\Lambda$ is a right $(H^{\mathcal F},{\mathcal C}_{\star})$-comodule finitely generated (finitely presented) in the category of right $(H^{\mathcal F},{\mathcal C}_{\star})$-comodules (which is the case if $\Lambda$ is finitely generated (finitely presented) as a left $(^{\prime}({\mathcal C}_{\star}))^{op} \#H^{\mathcal F}$-module).

\bigskip
 
Let us end the paper by the following remark.

\begin{remark} We refer to \cite{Bohm}, \cite{BrMil}, \cite{Rang} and \cite{Schauen} for more information on bialgebroids. We keep the notations and the conventions above: $H$ is a bialgebra and $A$ is an $H$-algebra. By \cite{Bohm}, $G=A \otimes H \otimes A$ is a left $A$-bialgebroid (Connes-Moscovici's bialgebroid) with an associative multiplication 
$$(a \otimes h \otimes u)(a' \otimes h' \otimes u')=a_1(h_1.a') \otimes h_2h' \otimes (h_3.u')u$$
with unit $1_A \otimes 1_H \otimes 1_A$, with source and target maps
$$A \rightarrow G, \quad a \mapsto a \otimes 1_H \otimes 1_A \quad  \hbox{and} \quad A^{op} \rightarrow G, \quad a \mapsto 1_A \otimes 1_H \otimes a,$$
respectively. Coproduct and counit are given by (see also Lemma 2.4, where $H$ is just a coalgebra)
$$a \otimes h \otimes u \mapsto (a \otimes h_1 \otimes 1_A) \otimes_A(1_A \otimes h_2 \otimes u) \quad \hbox{and} \quad a \otimes h \otimes u \mapsto a\epsilon_H(h)u.$$
A homomorphism of left $G$-modules is an $H$-linear $A$-bimodule homomorphism. As in \cite{Schauen}, we can assert that the category $_{H,A}{\mathcal M}_A$ of left $(H,A)$-bimodules and the category $_G{\mathcal M}$ of left $G$-modules are isomorphic: $M$ is a left $G$-module if and only if $M$ is a left $(H,A)$-bimodule: $$a(hm)u=(a \otimes h \otimes u)m, \quad \forall a, u \in A, \quad h \in H.$$
If ${\mathcal C}$ is an $(H,A)$-coring, we can show that $\Delta_{\mathcal C}$ and $\epsilon_{\mathcal C}$ are $G$-linear. Thus an $(H,A)$-coring $\mathcal C$ is exactly a coalgebra in the monoidal category $(_G{\mathcal M}, \otimes_A,A$, i.e $\mathcal C$ is a $G$-module coalgebra. Set $${\mathcal D}= {\mathcal C} \otimes_AG \cong {\mathcal C} \otimes H \otimes A.$$ 
Then it is well known that $\mathcal D$ is a $G$-coring. The comodules over $\mathcal D$ are left $(H,A)$-bimodules with a compatible $\mathcal D$-coaction: as we can see, they are very closely related to the right $(H, {\mathcal C})$-comodules considered in this paper. Maybe this observation can help to deduce the results presented in this paper from the results in \cite{CG} and \cite{TG}.  
 \end{remark}

\end{document}